\numberwithin{equation}{section}
\newtheorem{theorem}{Theorem}
\newtheorem{proposition}[theorem]{Proposition}
\newtheorem{lemma}[theorem]{Lemma}
\theoremstyle{definition}
\theoremstyle{remark}
\newtheorem{remark}[theorem]{Remark}
\newtheorem*{remarkno}{Remark}
\def\po#1#2{(#1)_#2}
\def\fl#1{\left\lfloor#1\right\rfloor}
\def\cl#1{\left\lceil#1\right\rceil}
\def\po#1#2{(#1)_#2}
\def\coef#1{\left\langle#1\right\rangle}
\def\Res{\operatorname{Res}}
\def\al{\alpha}
\def\({\left(}
\def\){\right)}
\def\[{\left[}
\def\]{\right]}
\def\fl#1{\left\lfloor#1\right\rfloor}
\definecolor{blaugrau}{rgb}{0.796887, 0.789075, 0.871107}
\newcounter{mmacnt}
\def\restartmma{\setcounter{mmacnt}{0}}
\newenvironment{mma}{
 \par
 \catcode`|=\active
 \parskip=2pt\parindent=0pt 
 \small
 \def\In##1\\{%
   \def\linebreak{\hfill\break\null\qquad}%
   \refstepcounter{mmacnt}
   \ifnum\value{mmacnt}>9
   \hangindent=3em\hangafter=0\else
   \hangindent=2.8em\hangafter=0\fi
   \leavevmode
   \llap{\tiny\sffamily In[\arabic{mmacnt}]:=\kern.5em}%
   \mathversion{bold}\scriptsize$\tt\bf\displaystyle##1$\normalsize
   \mathversion{normal}\par
 }%
 \def\Print##1\\{%
   \def\linebreak{\hfill\break}%
   \hangindent=2.5em\hangafter=0
   \leavevmode\scriptsize ##1\par}%
 \def\Out##1\\{%
   \vspace*{-0.2cm}\def\linebreak{$\hfill\break\null\hfill$}%
   \kern\abovedisplayskip\par
   \ifnum\value{mmacnt}>9
   \hangindent=3.3em\hangafter=0\else
   \hangindent=2.9em\hangafter=0\fi
   \leavevmode
   \llap{\tiny\sffamily Out[\arabic{mmacnt}]=\kern.5em}
   \scriptsize$\displaystyle\tt##1$\normalsize\hfill\null\par
   \kern\belowdisplayskip\vspace*{-0.1cm}
 }%
 \def\Warning##1##2\\{%
   \def\linebreak{\hfill\break}%
   \hangindent=2.5em\hangafter=0
   \leavevmode
   {\scriptsize##1 : ##2}\par}%
}{%
 \par\smallskip
}
\newcommand{\LoadP}[1]{\fcolorbox{black}{blaugrau}{
\begin{minipage}[t]{13.4cm}
\footnotesize #1
\end{minipage}}}
\newcommand{\myOut}[1]{{\sffamily Out[#1]}}
\def\MLabel#1{{\refstepcounter{mmacnt}\label{#1}}\addtocounter{mmacnt}{-1}}
\newcommand{\MText}[1]{\textbf{\ttfamily#1}}
\begin{document}

\title{Evaluation of binomial double sums involving absolute values}

\author{C. Krattenthaler}

\address[C. Krattenthaler]{Fakult\"at f\"ur Mathematik, Universit\"at Wien,
Oskar-Mor\-gen\-stern-\break Platz~1, A-1090 Vienna, Austria.
WWW: \tt http://www.mat.univie.ac.at/\~{}kratt.}

\author{C. Schneider}

\address[Carsten Schneider]{Research Institute for Symbolic Computation\\
J. Kepler University Linz\\
A-4040 Linz, Austria}
\email{Carsten.Schneider@risc.jku.at}

\thanks{Research partially supported by the Austrian Science Foundation (FWF) 
grant SFB F50 (F5005-N15 and F5009-N15) in the framework of the
Special Research Program
``Algorithmic and Enumerative Combinatorics".}

\subjclass[2010]{Primary 05A19; Secondary 05A10 11B65 33C70 68R05 68W30}

\keywords{Binomial double sums, hypergeometric double sums, 
Chu--Vandermonde summation, Dixon's summation, Gosper algorithm,
difference rings, nested sums}

\begin{abstract}
We show that double sums of the form
$$
\sum_{i,j=-n} ^{n} 
|i^sj^t(i^k-j^k)^\beta| \binom {2n} {n+i} \binom {2n} {n+j}
$$
can always be expressed in terms of 
a linear combination of just four functions, namely
$\binom {4n}{2n}$, ${\binom {2n}n}^2$, $4^n\binom {2n}n$, and $16^n$,
with coefficients that are rational in~$n$. We provide two different
proofs: one is algorithmic and uses the second author's computer
algebra package {\tt Sigma}; the second is based on complex contour
integrals. In many instances, these results are extended to double
sums of the above form where $\binom {2n}{n+j}$ is replaced by
$\binom {2m}{m+j}$ with independent parameter~$m$.
\end{abstract}

\maketitle

\section{Introduction}
Motivated by work in \cite{BOS} concerning the Hadamard maximal determinant
problem~\cite{Hadamard83}, Brent and Osborn \cite{BO} proved the
double sum evaluation
\begin{equation} \label{eq:BO} 
\sum_{i,j=-n} ^{n} 
|i^2-j^2| \binom {2n} {n+i} \binom {2n} {n+j}
=2n^2{\binom {2n}n}^2.
\end{equation}
It should be noted that the difficulty in evaluating this sum
lies in the appearance of the absolute value. Without the absolute
value, the summand would become antisymmetric in~$i$ and~$j$ so
that the sum would trivially vanish.
Together with Ohtsuka and Prodinger, they went on in \cite{BOOPAA}
(see \cite{BOOPAB} for the published version)
to consider more general double sums of the form
\begin{equation} \label{eq:sumallg} 
\sum_{i,j=-n} ^{n} 
|i^sj^t(i^k-j^k)^\beta| \binom {2n} {n+i} \binom {2n} {n+j},
\end{equation}
mostly for small positive integers $s,t,k,\beta$.
Again, without the absolute value, the summation would not pose
any particular problem since it
could be carried out separately in~$i$ and~$j$ by means of a relatively
straightforward application of the binomial theorem.
In several cases, they found explicit evaluations
of such sums --- sometimes with proof, sometimes
conjecturally. 

The purpose of the current paper is to provide a complete treatment
of double sums of the form \eqref{eq:sumallg} and of the more
general form
\begin{equation} \label{eq:sumallg2} 
\sum_{i,j}  
|i^sj^t(i^k-j^k)^\beta| \binom {2n} {n+i} \binom {2m} {m+j},
\end{equation} 
with an independent parameter~$m$.
More precisely, 
using the computer algebra package {\tt Sigma}~\cite{Schneider:07a},
we were led to the conjecture that these double sums of the form~\eqref{eq:sumallg} can always be
expressed in terms of a linear combination of just four functions, namely
$\binom {4n}{2n}$, ${\binom {2n}n}^2$, $4^n\binom {2n}n$, and $16^n$,
with coefficients that are rational in~$n$, while in many instances
double sums of the form \eqref{eq:sumallg2} can be
expressed in terms of a linear combination of the four functions
$\binom {2n+2m}{n+m}$, ${\binom {2n}n}{\binom {2m}m}$, 
$4^n\binom {2m}m$, and $4^m\binom {2n}n$,
with coefficients that are rational in~$n$ and~$m$. 
We demonstrate this
observation in Theorems~\ref{thm:2}--\ref{thm:3B}, in a much more
precise form. 

It is not difficult to see that
the problem of evaluation of double sums of the form
\eqref{eq:sumallg} and \eqref{eq:sumallg2} 
can be reduced to the evaluation of sums of the form
\begin{equation} \label{eq:sum2} 
\sum_{0\le i\le j } 
i^sj^t \binom {2n} {n+i} \binom {2m} {m+j}
\end{equation}
(and a few simpler {\it single} sums). See the proofs of
Theorems~\ref{thm:2}--\ref{thm:3B} in Section~\ref{sec:Gl}
and Remark~\ref{rem:1}(1).
We furthermore show (see the proofs of 
Propositions~\ref{prop:0} and \ref{Prop:0}
in Section~\ref{sec:main}, which may be considered as the
actual main result of the present paper) that 
for the evaluation of double sums of the form \eqref{eq:sum2} it
suffices to evaluate four
{\it fundamental\/} double sums, given in
Lemmas~\ref{lem:10}--\ref{lem:22} in Section~\ref{sec:fund}. 
While Lemmas~\ref{lem:20}--\ref{lem:22} are relatively easy to prove
by telescoping arguments (see the proofs in Section~\ref{sec:fund}),
the proof of Lemma~\ref{lem:10} is more challenging.
We provide two different proofs,
one using computer algebra, and one using complex contour integrals.
We believe that both proofs are of intrinsic interest. The algorithmic
proof is described in Section~\ref{sec:Sigma}. There, we explain that the
computer algebra package {\tt Sigma} can be used in a completely automatic
fashion to evaluate double sums of the form \eqref{eq:sum2}. In particular,
the reader can see how we empirically
discovered our main results in Sections~\ref{sec:main} and \ref{sec:Gl}.
The second proof, based on the power of complex integration, is
explained in Section~\ref{sec:aux}.

We close our paper by proving another conjecture from 
\cite[Conj.~3.1]{BOOPAA}, namely the inequality (see
Theorem~\ref{thm:1} in Section~\ref{sec:Ungl})
$$
\sum_{i,j}\left\vert j^2-i^2\right\vert
\binom {2n}{n+i}
\binom {2m}{m+j}\ge 2nm \binom {2n}n\binom {2m}m.
$$
We show moreover that equality holds if and only
if $m=n$, in which case the evaluation \eqref{eq:BO} applies.
Although Lemmas~\ref{lem:10}--\ref{lem:22} would provide a good
starting point for a proof of the inequality, we prefer 
to use a more direct approach, involving an application of
Gosper's algorithm \cite{GospAB} at a crucial point.

We wish to point out that Bostan, Lairez and Salvy \cite{BoLSAA}
have developed an algorithmic approach --- based on contour integrals
--- that is capable of automatically finding a recurrence for the
double sum \eqref{eq:sumallg} for any particular choice of
$s,t,k,\beta$, and, thus, is able to establish an evaluation 
of such a sum (such as \eqref{eq:BO}, for example) once the right-hand
side is found. 

Our final remark is that some of the double sums \eqref{eq:sumallg}
and \eqref{eq:sumallg2} 
can be embedded into
infinite families of multidimensional sums that still allow for
closed form evaluations, see~\cite{BrKWAA}.

\section{The fundamental lemmas}
\label{sec:fund}

In this section, we state the summation identities which form the
basis of the evaluation of double sums of the form \eqref{eq:sum2}
(and, thus, of double sums of the form \eqref{eq:sumallg} and
\eqref{eq:sumallg2}).
As it turns out, Lemmas~\ref{lem:20}--\ref{lem:22} are very easy
to prove since at least one summation of the double sum can be
put in telescoping form, see the proofs below. Lemma~\ref{lem:10} is much
more subtle.
We provide two different proofs, the first being algorithmic ---
see Section~\ref{sec:Sigma}, the second making use of complex 
integration --- see Section~\ref{sec:aux}.

\begin{lemma} \label{lem:10}
For all non-negative integers $n$ and $m$, we have
\begin{multline} \label{eq:neu1} 
\sum_{0\le i\le j}
\binom {2n}{n+i}
\binom {2m}{m+j}
=
2^{2n+2m-3}
+\frac {1} {4}\binom {2n+2m} {n+m}
+\frac {1} {2}\binom {2n}n\binom {2m}m\\
+2^{2m-2}\binom {2n}n
-
\frac {1} {8}
\sum_{\ell=0} ^{n-m}
\binom {2n-2\ell}{n-\ell}
\binom {2m+2\ell} {m+\ell},
\end{multline}
where the sum on the right-hand side 
has to be interpreted as explained in Lemma~\ref{lem:13}.
\end{lemma}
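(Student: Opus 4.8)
The plan is to attack the double sum by complex contour integration, converting the order restriction $0\le i\le j$ into a rational kernel. Writing each binomial coefficient as a Laurent coefficient, $\binom{2n}{n+i}=\frac{1}{2\pi i}\oint (1+x)^{2n}x^{-n-i-1}\,dx$ and likewise for $\binom{2m}{m+j}$ in a variable $y$, with both contours circles of radius exceeding $1$, one may interchange summation and integration. The geometric series produced by the constraint then sums to a rational function,
\begin{equation*}
\sum_{0\le i\le j}x^{-i}y^{-j}=\frac{y}{y-1}\cdot\frac{xy}{xy-1},
\end{equation*}
valid on the chosen contours, so that
\begin{equation*}
\sum_{0\le i\le j}\binom{2n}{n+i}\binom{2m}{m+j}
=\frac{1}{(2\pi i)^2}\oint\!\oint
\frac{(1+x)^{2n}(1+y)^{2m}\,y}{x^{n}y^{m}(y-1)(xy-1)}\,dx\,dy.
\end{equation*}

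Next I would evaluate the integral by residues, integrating in $x$ first. As a function of $x$ the integrand has only the pole $x=0$, of order $n$, and the simple pole $x=1/y$, both enclosed by the contour, so the $x$-integral is the sum of the two residues. The residue at $x=1/y$ collapses to the clean expression $(1+y)^{2n+2m}\big/\bigl(y^{n+m}(y-1)\bigr)$, whose $y$-integral over the poles $y=0$ and $y=1$ contributes a multiple of $\binom{2n+2m}{n+m}$ and a power of two. The residue at $x=0$ is itself a truncated binomial sum in a fresh index; its subsequent $y$-integration splits into a pole at $y=1$, contributing the terms built from $4^{m}\binom{2n}{n}$, and a pole at $y=0$, which leaves a double sum of partial binomial sums. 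Collecting the pieces from both $x$-residues then yields the four closed-form terms with their final coefficients $2^{2n+2m-3}$, $\tfrac14$, $\tfrac12$ and $2^{2m-2}$, plus a leftover double sum.

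The main obstacle is this last double sum: I expect the hardest step to be showing that the nested partial binomial sums arising from $\operatorname{Res}_{x=0}\operatorname{Res}_{y=0}$ collapse to the single partial convolution $\sum_{k=m}^{n}\binom{2k}{k}\binom{2(n+m-k)}{n+m-k}$, which is exactly $\sum_{\ell=0}^{n-m}\binom{2n-2\ell}{n-\ell}\binom{2m+2\ell}{m+\ell}$. This reduction rests on the convolution identity $\sum_{k=0}^{N}\binom{2k}{k}\binom{2N-2k}{N-k}=4^{N}$ together with repeated Chu--Vandermonde summation to unwind the inner partial sums; it is also here that the sign of $n-m$ enters, through the range in which the pole at $y=0$ is actually present, which is precisely what forces the interpretation of the truncated sum stated in the lemma when $n<m$.

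Finally, as an independent check on the closed-form part, a short reflection argument confirms the symmetric combination without any integration: since $\binom{2n}{n+i}$ and $\binom{2m}{m+j}$ are even in $i$ and $j$, substituting $(i,j)\mapsto(-i,-j)$ shows that the sum over all integer pairs with $i\le j$ equals the one with $i\ge j$, and separating the diagonal $i=j$ (summable by Chu--Vandermonde to $\binom{2n+2m}{n+m}$) yields a closed form for $S(n,m)+S(m,n)$, where $S(n,m)$ denotes the double sum on the left-hand side of \eqref{eq:neu1}. Matching it against the claimed right-hand side reproduces the identity $\sum_{\ell=0}^{n-m}\binom{2n-2\ell}{n-\ell}\binom{2m+2\ell}{m+\ell}+\sum_{\ell=0}^{m-n}\binom{2m-2\ell}{m-\ell}\binom{2n+2\ell}{n+\ell}=2\binom{2n}{n}\binom{2m}{m}$, which both pins down the summation convention and corroborates the contour computation.
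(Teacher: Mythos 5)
Your starting point --- representing each binomial coefficient as a contour integral and summing the kernel $\sum_{0\le i\le j}x^{-i}y^{-j}=\frac{xy^2}{(y-1)(xy-1)}$ over large circles --- is valid, and it is in the same spirit as the paper's second proof (which instead reflects $i\to-i$, $j\to -j$, works near the origin with kernel $1/((1-xy)(1-y))$, substitutes $x=u/(1-u)$, $y=t/(1-t)$, and splits into five integrals). But there is a genuine gap at exactly the point you flag as ``the hardest step'', and it is the whole content of the lemma. Carrying your residue computation through, the term $\operatorname{Res}_{x=0}\operatorname{Res}_{y=0}$ equals
\begin{equation*}
D=\sum_{k=0}^{n-1}\binom{2n}{n-1-k}\sum_{l=0}^{m-k-2}\binom{2m}{l},
\end{equation*}
while the other residues contribute $\frac12\binom{2n+2m}{n+m}+2^{2m-1}\binom{2n}{n}$ in total (the powers of two cancel). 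Hence what remains to be proved is
\begin{equation*}
D=2^{2n+2m-3}-\tfrac14\binom{2n+2m}{n+m}+\tfrac12\binom{2n}{n}\binom{2m}{m}-2^{2m-2}\binom{2n}{n}
-\tfrac18\sum_{\ell=0}^{n-m}\binom{2n-2\ell}{n-\ell}\binom{2m+2\ell}{m+\ell}.
\end{equation*}
Two problems follow. First, your claim that the other residues already ``yield the four closed-form terms with their final coefficients'' is not accurate: $D$ must supply corrections to \emph{all four} closed-form terms, not merely produce the truncated convolution. Second, and more seriously, the tools you invoke do not reach this identity. The full convolution $\sum_{k=0}^{N}\binom{2k}{k}\binom{2N-2k}{N-k}=4^{N}$ and Chu--Vandermonde give no handle on the \emph{partial} convolution $\sum_{\ell=0}^{n-m}\binom{2n-2\ell}{n-\ell}\binom{2m+2\ell}{m+\ell}$, which is precisely the irreducible object that must be produced (it survives into the final answer because it does not simplify further). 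In the paper this step is the content of Lemmas~\ref{lem:12} and~\ref{lem:13}, whose proofs rest on a non-obvious device: symmetrise the double integral under $u\leftrightarrow t$, collect the residue at $t=u$, and exploit the factorisation $\big(t(1-t)\big)^{n-m+1}-\big(u(1-u)\big)^{n-m+1}=(u-t)(u+t-1)\sum_{\ell=0}^{n-m}\big(t(1-t)\big)^{\ell}\big(u(1-u)\big)^{n-m-\ell}$, which is what creates the truncated convolution and, as a by-product, forces the summation convention \eqref{eq:SUM} when $n<m$. Some equivalent idea is indispensable and is absent from your outline.

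Your closing consistency check is correct and worth keeping: adding the claimed right-hand sides for $S(n,m)$ and $S(m,n)$ and comparing with $\sum_{i,j\ge0}$ plus the diagonal does reduce to the (true) identity $\sum_{\ell=0}^{n-m}\binom{2n-2\ell}{n-\ell}\binom{2m+2\ell}{m+\ell}+\sum_{\ell=0}^{m-n}\binom{2m-2\ell}{m-\ell}\binom{2n+2\ell}{n+\ell}=2\binom{2n}{n}\binom{2m}{m}$ under the convention \eqref{eq:SUM}. But this only certifies the symmetric combination $S(n,m)+S(m,n)$; it says nothing about the antisymmetric part of \eqref{eq:neu1} and therefore cannot substitute for the missing evaluation of $D$.
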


\begin{lemma} \label{lem:20}
For all non-negative integers $n$ and $m$, we have
\begin{equation} \label{eq:neu10} 
\sum_{0\le i\le j}
i\,\binom {2n}{n+i}
\binom {2m}{m+j}
=
-\frac {n} {4}\binom {2n+2m} {n+m}
+n\,2^{2m-2}\binom {2n}n
+\frac {nm} {4(n+m)}\binom {2n}n\binom {2m}m.
\end{equation}
\end{lemma}

\begin{lemma} \label{lem:21}
For all non-negative integers $n$ and $m$, we have
\begin{equation} \label{eq:neu11} 
\sum_{0\le i\le j}
j\,\binom {2n}{n+i}
\binom {2m}{m+j}
=
\frac {m} {4}\binom {2n+2m} {n+m}
+\frac {m(m+2n)} {4(n+m)}\binom {2n}n\binom {2m}m.
\end{equation}
\end{lemma}

\begin{lemma} \label{lem:22}
For all non-negative integers $n$ and $m$, we have
\begin{equation} \label{eq:neu12} 
\sum_{0\le i\le j}
i\,j\,\binom {2n}{n+i}
\binom {2m}{m+j}
=
\frac {mn} {2(n+m)}\binom {2n+2m-2} {n+m-1}
+\frac {nm^2} {4(n+m)}\binom {2n}n\binom {2m}m.
\end{equation}
\end{lemma}

\begin{proof}[Proof of Lemma~\ref{lem:20}]
We have\footnote{\label{foot:1}The informed reader will have guessed that the
telescoping form of the summand was discovered by using Gosper's
algorithm \cite{GospAB} (see also \cite{PeWZAA}). 
The particular implementation that we applied is the
one due to Paule and Schorn \cite{PaScAA}.}
$$
i\binom {2n}{n+i}=\frac {n+i} {2}\binom {2n}{n+i}-
\frac {n+i+1} {2}\binom {2n}{n+i+1}.
$$
Thus, we obtain
\begin{align*}
\sum_{0\le i\le j}
i\,\binom {2n}{n+i}
\binom {2m}{m+j}
&=
\frac {1} {2}\sum_{j\ge0}
\left(n\binom {2n}{n}-
(n+j+1)\binom {2n}{n+j+1}\right)
\binom {2m}{m+j}\\
&=
\frac {n} {2}\binom {2n}{n}\sum_{j\ge0}
\binom {2m}{m+j}-
\frac {1} {2}\sum_{j\ge0}
(n-j)\binom {2n}{n+j}
\binom {2m}{m+j}.
\end{align*}
The first sum is, essentially, one half of a binomial theorem,
$$
\sum_{j\ge0}\binom {2m}{m+j}
=\frac {1} {2}\binom {2m}m+2^{2m-1}.
$$
In order to evaluate the second sum, we observe that\footnote{For the
finding of the telescoping form of the sum over $j\ge0$ below
see Footnote~\ref{foot:1}.}
\begin{align*}
\sum_{j\ge0}
(n-j)&\binom {2n}{n+j}
\binom {2m}{m+j}
=
n\sum_{j\ge0}
\binom {2n}{n+j}
\binom {2m}{m+j}
-
\sum_{j\ge0}
j\binom {2n}{n+j}
\binom {2m}{m+j}\\
&=
\frac {n} {2}\sum_{j=-\infty}^\infty
\binom {2n}{n+j}
\binom {2m}{m+j}
+\frac {n} {2}
\binom {2n}{n}
\binom {2m}{m}\\
&\kern1cm
-
\sum_{j\ge0}
\left(
\frac {(n+j)(m+j)} {2(m+n)}
\binom {2n}{n+j}
\binom {2m}{m+j}
\right.\\
&\kern3cm
-
\left.
\frac {(n+j+1)(m+j+1)} {2(m+n)}
\binom {2n}{n+j+1}
\binom {2m}{m+j+1}
\right)\\
&=
\frac {n} {2}\sum_{j=-\infty}^\infty
\binom {2n}{n+j}
\binom {2m}{m-j}
+\frac {n} {2}
\binom {2n}{n}
\binom {2m}{m}
-
\frac {nm} {2(m+n)}
\binom {2n}{n}
\binom {2m}{m}.
\end{align*}
The sum in the last line can be evaluated by means of the
Chu--Vandermonde summation formula (cf.\ \cite[Sec.~5.1,
  (5.27)]{GrKPAA}). Substitution of these findings and
little simplification then leads to the right-hand side of
\eqref{eq:neu10}.
\end{proof}

\begin{proof}[Proof of Lemma~\ref{lem:21}]
We have
\begin{equation} \label{eq:tele} 
j\binom {2m}{m+j}=\frac {m+j} {2}\binom {2m}{m+j}-
\frac {m+j+1} {2}\binom {2m}{m+j+1}.
\end{equation}
Thus, we obtain
\begin{align*}
\sum_{0\le i\le j}
j\,\binom {2n}{n+i}
\binom {2m}{m+j}
&=
\frac {1} {2}\sum_{i\ge0}
\binom {2n}{n+i}
(m+i)\binom {2m}{m+i}\\
&=
\frac {m} {2}\sum_{i\ge0}
\binom {2n}{n+i}
\binom {2m}{m+i}
+
\frac {1} {2}\sum_{i\ge0}
i\binom {2n}{n+i}
\binom {2m}{m+i}.
\end{align*}
We have evaluated the same sums in the previous proof.
We leave it to the reader to fill in the details in order to arrive
at the right-hand side of \eqref{eq:neu11}.
\end{proof}

\begin{proof}[Proof of Lemma~\ref{lem:22}]
Using \eqref{eq:tele}, we have
\begin{align*}
\sum_{0\le i\le j}
i\,j\,&\binom {2n}{n+i}
\binom {2m}{m+j}
=
\frac {1} {2}\sum_{i\ge0}
i\binom {2n}{n+i}
(m+i)\binom {2m}{m+i}\\
&=
\frac {1} {2}\sum_{i\ge0}
(n+i)\binom {2n}{n+i}
(m+i)\binom {2m}{m+i}\\
&\kern2cm
-
\frac {n} {2}\sum_{i\ge0}
\binom {2n}{n+i}
(m+i)\binom {2m}{m+i}\\
&=
2nm
\sum_{i\ge0}
\binom {2n-1}{n+i-1}
\binom {2m-1}{m+i-1}
-
\frac {n} {2}\sum_{i\ge0}
\binom {2n}{n+i}
(m+i)\binom {2m}{m+i}.
\end{align*}
We have evaluated the second sum in the previous proof.
In order to evaluate the first sum, we do the substitution
$i\to-i+1$ and obtain
\begin{align*}
\sum_{i\ge0}
\binom {2n-1}{n+i-1}
&\binom {2m-1}{m+i-1}
=
\frac {1} {2}
\sum_{i\ge0}
\binom {2n-1}{n+i-1}
\binom {2m-1}{m+i-1}\\
&\kern4cm
+
\frac {1} {2}
\sum_{i\le1}
\binom {2n-1}{n-i}
\binom {2m-1}{m-i}\\
&=
\frac {1} {2}\sum_{i=-\infty}^\infty
\binom {2n-1}{n+i-1}
\binom {2m-1}{m+i-1}\\
&\kern4cm
+
\frac {1} {2}
\binom {2n-1}{n-1}
\binom {2m-1}{m-1}
+
\frac {1} {2}
\binom {2n-1}{n}
\binom {2m-1}{m}\\
&=
\frac {1} {2}\sum_{i=-\infty}^\infty
\binom {2n-1}{n+i-1}
\binom {2m-1}{m-i}
+
\binom {2n-1}{n}
\binom {2m-1}{m}.
\end{align*}
Again, the sum can be evaluated by means of the
Chu--Vandermonde summation formula, and then
substitution of these findings and
little simplification leads to the right-hand side of
\eqref{eq:neu12}.
\end{proof}

\section{Proof of Lemma~\ref{lem:10}
using the computer algebra package {\tt Sigma}}
\label{sec:Sigma}

Here we show how Lemma~\ref{lem:10} can be established by
using the algorithmic tools provided by the summation package
\texttt{Sigma}~\cite{Schneider:07a} of the second author. 
Algorithmic proofs of Lemmas~\ref{lem:20}--\ref{lem:22} are much
simpler and could be obtained completely analogously. 

We seek an alternative representation of the double sum
\begin{equation}\label{Equ:DoubeSumOrg}
S(n,m)=\sum_ {0\le i\le j}
\binom {2 n} {n + i}
\binom {2 m} {m + j}
\end{equation}
for all non-negative integers $m,n$ 
with the following property: if one specialises $m$
(respectively\ $n$) to a non-negative integer or if one knows the distance
between $n$ and $m$, then the evaluation of the double sum should be
performed in a direct and simple fashion. In order to accomplish this
task, we utilise the summation package
\texttt{Sigma}~\cite{Schneider:07a}. 

The sum~\eqref{Equ:DoubeSumOrg}
can be rewritten in the form 
\begin{equation}\label{Equ:DoubleSumInput}
S(n,m)=\sum_{j=0}^{m}f(n,m,j)
\end{equation}
with
\begin{equation}\label{Equ:InnerSum}
f(n,m,j)=\binom{2 m}{j
+m
} 
\sum_{i=0}^j \binom{2 n}{i
+n
}.
\end{equation}
Given this sum representation we will exploit the following summation
spiral that is built into \texttt{Sigma}: 
\begin{enumerate}
 \item Calculate a linear recurrence in $m$ of order $d$
(for an appropriate positive integer~$d$) for the
   sum $S(n,m)$ by the creative telescoping paradigm; 
 \item solve the recurrence in terms of (indefinite) nested sums over
   hypergeometric products with respect to $m$ (the corresponding sequences
   are also called {\it d'Alembertian solutions}, see~\cite{PeWZAA}); 
 \item combine the solutions into an expression $\text{RHS}(n,m)$ such
   that $S(n,l)=\text{RHS}(n,l)$ holds for all $n$ and
   $l=0,1,\dots,d-1$.  
\end{enumerate}
Then this implies that $S(n,m)=\text{RHS}(n,m)$ holds for all
non-negative integers $m,n$.

\smallskip

\begin{remark} This summation engine can be considered as a
  generalisation of~\cite{PeWZAA} that works not only for
  hypergeometric products but for expressions in terms of nested sums
  over such hypergeometric products. It is based on a constructive
  summation theory of difference rings and
  fields~\cite{Schneider:16a,Schneider:16b} that enhances Karr's
  summation approach~\cite{Karr:81} in various directions.  
\end{remark}

In the following paragraphs, we assume that $m\leq n$. 
We activate \texttt{Sigma}'s summation spiral.

\medskip
\noindent{\sc Step 1.} Observe that our sum~\eqref{Equ:DoubleSumInput}
with summand given in~\eqref{Equ:InnerSum} 
is already in the right input form for
\texttt{Sigma}: the summation objects of~\eqref{Equ:InnerSum} are
given in terms of nested sums over hypergeometric products. More
precisely, let $\mathcal{S}_j$ denote the shift operator with respect
to~$j$, that is, $\mathcal{S}_jF(j):=F(j+1)$. Then,
if one applies this shift operator to 
the arising objects of $f(n,m,j)$, one can rewrite them again in their
non-shifted versions: 
\begin{equation}\label{Equ:ObjectRel}
\begin{split}
\mathcal{S}_j\binom{2 m}{j
+m
}&=\frac{m-j}{1
+j
+m
}\binom{2 m}{j
+m
},\\
\mathcal{S}_j\sum_{i=0}^j \binom{2 n}{i
+n
}&=
\sum_{i=0}^j \binom{2 n}{i
+n
}
+\frac{n-j
}{1
+j
+n
}\binom{2 n}{j
+n
}.
\end{split}
\end{equation}
With the help of these identities,
we can look straightforwardly for a linear
recurrence in the free integer parameter $m$ as follows. First, we
load \texttt{Sigma} into the computer algebra system {\sl Mathematica},

\begin{mma}
\In << Sigma.m \\
\Print \LoadP{Sigma - A summation package by Carsten Schneider
\copyright\ RISC-Linz}\\
\end{mma}

\noindent and enter our definite sum $S(n,m)$:

\begin{mma}
\In mySum = SigmaSum[Binomial[2 m, j + m] SigmaSum[Binomial[2 n, i +
      n], \{i, 0, j\}], \{j, 0, m\}]\\ 
\Out\sum_{j=0}^{m}\binom{2 m}{j
+m
} 
\sum_{i=0}^j \binom{2 n}{i
+n
}\\
\end{mma}

\noindent Then we compute a recurrence in $m$ by executing the function call

\begin{mma}
\In rec=GenerateRecurrence[mySum, m][[1]]\\ 
\Out SUM[m+1]-4 SUM[m]
==-\frac{1}{1
+m
+n
}\sum_{i=0}^m \binom{2 m}{i
+m
} \binom{2 n}{i
+n
}
+\frac{m n}{(m+1) (1
+m
+n
)}\binom{2 m}{m} \binom{2 n}{n}\\
\end{mma}

\noindent This means that $\texttt{SUM}[m]=S(n,m)(=\texttt{mySum})$ is
a solution of the output recurrence. But what is going on behind the scenes?
Roughly speaking,
Zeilberger's creative telescoping paradigm~\cite{PeWZAA} is carried
out in the setting of difference rings. More precisely, one tries to
compute a recurrence for the summand $f(n,m,j)$ of the form
\begin{multline}\label{Equ:SummandRec}
 c_0(n,m) f(n,m,j)+c_1(n,m) f(n,m+1,j)+\dots+c_d(n,m) f(n,m+d,j)\\
 =g(n,m,j+1)-g(n,m,j),
\end{multline}
for $d=0,1,2,\dots$. In our particular instance, \texttt{Sigma} is
successful for $d=1$ and delivers the solution 
$c_0(n,m)=-4$, $c_1(n,m)=1$, and 
\begin{multline} \label{eq:g-f}
g(n,m,j)=\frac{ (2 j-1)}{-1
        +j
        -m
        }\binom{2 m}{j
        +m
        } 
        \sum_{i=0}^j \binom{2 n}{i
        +n
        }\\
        +\frac{j
        -n
        }{1
        +m
        +n
        }\binom{2 m}{j
        +m
        } \binom{2 n}{j
        +n
        }
        +\frac{1}{-1
        -m
        -n
        }\sum_{i=0}^j \binom{2 m}{i
        +m
        } \binom{2 n}{i
        +n
        },
\end{multline}
which holds for all non-negative integers $j,m,n$ 
with $0\leq j\leq m\leq n$. The
correctness can be verified by substituting the right-hand side of
\eqref{Equ:InnerSum}
into~\eqref{Equ:SummandRec}, rewriting the summation objects in terms
of 
$\binom{2 m}{j
+m
}$ and $\sum_{i=0}^j \binom{2 n}{i
+n
}$ using the relations given in~\eqref{Equ:ObjectRel} and 
$\mathcal{S}_m\binom{2 m}{j
        +m
        }=
        \frac{2 (m+1) (2 m+1)}{(m-j+1) (1
        +j
        +m
        )}\binom{2 m}{j
        +m
        }
$, 
and applying simple rational function arithmetic. 
We recall that we assumed $m\le n$, and this restriction is indeed
essential for being allowed to use {\tt Sigma} in the described
setup. However, the above check reveals that the result is in fact
correct without any restriction on the relative sizes of $m$ and~$n$.

Finally,
by summing~\eqref{Equ:SummandRec} over $j$ from $0$ to $m$,
we obtain the linear recurrence 
\begin{multline*}
\sum_{j=0}^m f(n,m+1,j)-4\sum_{j=0}^m f(n,m,j)
=-
\sum_{j=0}^{m+1} \binom{2 n}{i
+n
}
+\frac{1}{-1
-m
-n
}\sum_{i=0}^m \binom{2 m}{i
+m
} \binom{2 n}{i
+n
}\\
+\frac{m n}{(m+1) (1
+m
+n
)}\binom{2 m}{m} \binom{2 n}{n}.
\end{multline*}
which, by the above remark, holds for all non-negative integers $m,n$.
As is straightforward to see, this is indeed equivalent to
\myOut3.

\medskip
\noindent{\sc Step~2.} 
We now apply our summation
toolbox to the definite sum $\sum_{i=0}^m \binom{2 m}{i 
+m
} \binom{2 n}{i
+n
}$ and obtain
\begin{equation} \label{eq:Chu1} 
\sum_{i=0}^m \binom{2 m}{m
+i
} \binom{2 n}{n
+i
}= \frac{1}{2} \binom{2 m}{m} \binom{2 n}{n}
+\frac{1}{2} \binom{2 m
+2 n
}{m
+n
}.
\end{equation}
Note that the calculations can be verified rigorously and as a
consequence we obtain a proof that the identity holds for all
non-negative integers $m,n$. 
Since we remain in this particular case purely in the hypergeometric
world, one could also use the classical toolbox described
in~\cite{PeWZAA}. Yet another (classical) proof consists in observing that
the sum on the left-hand side of \eqref{eq:Chu1} can be rewritten
as
\begin{multline*}
\frac {1} {2}
\Bigg(\sum_{i=0}^m \binom{2 m}{m
+i
} \binom{2 n}{
n-i
}
+
\sum_{i=0}^m \binom{2 m}{m-i
} \binom{2 n}{n+
i
}
\Bigg)\\
=
\frac {1} {2}
\Bigg(\sum_{i=0}^{2m} \binom{2 m}{i
} \binom{2 n}{
n+m-i
}
+\binom {2m}m\binom {2n}n
\Bigg),
\end{multline*}
and then evaluating the sum on the right-hand side 
by means of the Chu--Vandermonde summation formula.

As a consequence, we arrive at the linear
recurrence 

\begin{mma}\MLabel{MMA:RecMSmallerN}
\In rec=rec/.
\sum_{i=0}^m \binom{2 m}{i
+m
} \binom{2 n}{i
+n
}\to \frac{1}{2} \binom{2 m}{m} \binom{2 n}{n}
+\frac{1}{2} \binom{2 m
+2 n
}{m
+n
}\\
\Out SUM[m+1]-4 SUM[m]
==-\frac{\binom{2 m
+2 n
}{m
+n
}}{1
+m
+n
} \frac{1}{2}
+\frac{(-1
-m
+2 m n
)\binom{2 m}{m} \binom{2 n}{n}}{2(m+1) (1
+m
+n
)}
\\
\end{mma}

Now we can activate \texttt{Sigma}'s recurrence solver with
the function call 

\begin{mma}
\In recSol=SolveRecurrence[rec,SUM[m]]\\
\Out \{\{0, 2^{2 m}\},\{1, \frac{1}{4} \binom{2 m}{m} \binom{2 n}{n}
        +\frac{1}{4} \binom{2 m
        +2 n
        }{m
        +n
        }
        +2^{2 m} \binom{2 n}{n} \Big(
                -\frac{1}{4}
                +\frac{1}{4} n 
                \sum_{i=0}^m \frac{2^{-2 i} \binom{2 i}{i}}{i
                +n
                }
        \Big)\}\}\\
\end{mma}
\noindent This means that the first entry of the output is the
solution of the homogeneous version of the recurrence, and the second
entry is a solution of the recurrence itself. Hence, 
the general solution is
\begin{equation}\label{Equ:GeneralSol}
c\,2^{2 m}+ \frac{1}{4} \binom{2 m}{m} \binom{2 n}{n}
        +\frac{1}{4} \binom{2 m
        +2 n
        }{m
        +n
        }+2^{2 m} \binom{2 n}{n} \Bigg(
                -\frac{1}{4}
                +\frac{1}{4} n 
                \sum_{i=0}^m \frac{2^{-2 i} \binom{2 i}{i}}{i
                +n}\Bigg),
\end{equation}
where the constant $c$ (free of $m$) can be freely chosen. We note
that this solution can be easily verified by substituting it
into \MText{rec} computed in~\myOut{\ref{MMA:RecMSmallerN}} and using
the relations 
\begin{align*}
\mathcal{S}_m\binom{2 m}{m}&=\frac{2 (2 m+1) }{m+1}\binom{2 m}{m},\\
\mathcal{S}_m\binom{2 m
+2 n
}{m
+n
}&=\frac{2 (2 m
+2 n
+1
)}{m
+n
+1
}\binom{2 m
+2 n
}{m
+n
},\\
\mathcal{S}_m\sum_{i=0}^m \frac{2^{-2 i} \binom{2 i}{i}}{i
+n
}&=
\sum_{i=0}^m \frac{2^{-2 i} \binom{2 i}{i}}{i
+n
}
+\frac{2^{-2 m} (2 m+1)}{2(m+1) (1
+m
+n
)}\binom{2 m}{m}.
\end{align*}

\medskip
\noindent{\sc Step~3.} Looking at the initial value
$S(n,0)=\binom{2n}{n}$, we conclude that the specialisation 
$c=\frac{1}{2}\binom{2n}{n}$ in~\eqref{Equ:GeneralSol} equals 
$S(n,m)$ for all $n\geq0$ and $m=0$.  

\medskip
Summarising, we have found (together with a proof) the representation
\begin{equation}\label{Equ:SumRepInM}
S(n,m)=2^{2 m-2} \binom{2 n}{n} n 
\sum_{i=0}^m \frac{2^{-2 i} \binom{2 i}{i}}{i
+n
}
+2^{2 m-2} \binom{2 n}{n}
+\frac{1}{4} \binom{2 m}{m} \binom{2 n}{n}
+\frac{1}{4} \binom{2 m
+2 n
}{m
+n
},
\end{equation}
which holds for all non-negative integers $m,n$. 
This last calculation
step can be also carried out within \texttt{Sigma}, by making use of 
the function call

\begin{mma}
\In FindLinearCombination[recSol, \{0, \{\tbinom{2n}{n}\}\}, m, 1]\\
\Out 2^{2 m-2} \binom{2 n}{n} n 
\sum_{i=0}^m \frac{2^{-2 i} \binom{2 i}{i}}{i
+n
}
+2^{2 m-2} \binom{2 n}{n}
+\frac{1}{4} \binom{2 m}{m} \binom{2 n}{n}
+\frac{1}{4} \binom{2 m
+2 n
}{m
+n
}\\
\end{mma}

\medskip
Strictly speaking, the above derivations contained one ``human" 
(=~non-auto\-ma\-tic) step, namely at the point where we checked
\eqref{eq:g-f} and observed that this relation actually holds
without the restriction $m\le n$. For the algorithmic ``purist"
we point out that it is also possible to set up the problem
appropriately under the restriction $m>n$ (by splitting the double sum
$S(n,m)$ into two parts) so that {\tt Sigma} is applicable.
Not surprisingly, {\tt Sigma} finds \eqref{Equ:SumRepInM} again.

\medskip
In this article, we are particularly interested in the
evaluation of $S(n,m)$ if one fixes the distance $r=n-m\geq0$ (or
$r=m-n\geq0$).   
In order to find such a representation for the case $m\leq n$, we
manipulate the obtained sum  
\begin{equation}\label{Equ:SubSum1}
\sum_{i=0}^m \frac{2^{-2 i} \binom{2 i}{i}}{i
+n
}=\sum_{i=0}^m \frac{2^{-2 i} \binom{2 i}{i}}{i
+r+m
}:=T(m,r)
\end{equation}
in~\eqref{Equ:SumRepInM} further by applying once more \texttt{Sigma}'s
summation spiral (where $r$ takes over the role of $m$).

\medskip
\noindent{\sc Step~1.} Using \texttt{Sigma} (alternatively one
could use the  
Paule and Schorn implementation~\cite{PaScAA} of Zeilberger's
algorithm), we obtain the recurrence 
$$2 (m
+r
) T(m,r)
+(-1
-2 m
-2 r
) T(m,r+1)
=\frac{2^{-2 m} (2 m+1) \binom{2 m}{m}}{2 m
+r
+1
}.$$

\medskip
\noindent{\sc Step~2.} Using \texttt{Sigma}'s recurrence solver
we obtain the general solution 
$$
d\,\frac{2^{2 r}m \binom{2 m}{m}}{\binom{2 m
        +2 r
        }{m
        +r
        } (m
        +r
        )}
        + \frac{2^{-2 m} \binom{2 m}{m}}{m
        +r
        }
        -\frac{2^{-2 m+2 r} (4 m+1)\binom{2 m}{m}^2}{2\binom{2 m
        +2 r
        }{m
        +r
        } (m
        +r
        )}
        -\frac{2^{2 r-2 m}m \binom{2 m}{m}}{\binom{2 m
        +2 r
        }{m
        +r
        } (m
        +r
        )} 
        \sum_{i=0}^r \tfrac{2^{-2 i} \binom{2 m
        +2 i
        }{m
        +i
        }}{2 m
        +i
        },
$$
where the constant $d$ (free of $r$) can be freely chosen.

\medskip
\noindent{\sc Step~3.} Looking at the initial value 
$$T(m,0)=\sum_{i=0}^m \frac{2^{-2 i} \binom{2 i}{i}}{i
+m
}=\frac{2^{2 m-1}}{m \binom{2 m}{m}}
+\frac{2^{-2 m-1} \binom{2 m}{m}}{m},
$$
which we simplified by another round of \texttt{Sigma}'s summation
spiral, we conclude that we have to specialise $d$ to  
$$d=\frac{2^{2 m-1}}{m \binom{2 m}{m}}
+\frac{2^{-2 m-1} (4 m+1) \binom{2 m}{m}}{m}.$$
With this choice, we end up at the identity
$$T(m,r)=-\frac{2^{2 r-2 m} m\binom{2 m}{m}}{\binom{2 m
+2 r
}{m
+r
} (m
+r
)}  
\sum_{i=0}^r \frac{2^{-2 i} \binom{2 i
+2 m
}{i
+m
}}{i
+2 m
}
+\frac{2^{-2 m} \binom{2 m}{m}}{m
+r
}
+\frac{2^{2 m+2 r-1}}{\binom{2 m
+2 r
}{m
+r
} (m
+r
)},
$$
being valid for all non-negative integers $r,m$.
Finally, performing the substitution $r\to n-m$, we find the identity 
\begin{equation}\label{Equ:SumRepInDiff}
T(m,n-m)=-\frac{2^{2 n-4 m} \binom{2 m}{m}}{n \binom{2 n}{n}} m 
\sum_{i=0}^{n-m
} \frac{2^{-2 i} \binom{2 i
+2 m
}{i
+m
}}{i
+2 m
}
+\frac{2^{2 n-1}}{n \binom{2 n}{n}} 
+\frac{2^{-2 m} \binom{2 m}{m}}{n},
\end{equation}
which holds for all non-negative integers $n,m$ with $n\geq m$.
By substituting this result into~\eqref{Equ:SumRepInM}, we see that we
have discovered {\it and\/} proven that
\begin{multline} \label{eq:lem10-1}
S(n,m)=-2^{-2 m+2 n-2} \binom{2 m}{m} m 
\sum_{i=0}^{n-m
} \frac{2^{-2 i} \binom{2 i
+2 m
}{i
+m
}}{i
+2 m
}\\
+2^{2 m-2} \binom{2 n}{n}
+\frac{1}{2} \binom{2 m}{m} \binom{2 n}{n}
+\frac{1}{4} \binom{2 m
+2 n
}{m
+n
}
+2^{2 m+2 n-3},
\end{multline}
which is valid for all non-negative integers $n,m$ with $n\geq m$. 
In a similar fashion, if $m\ge n$, we obtain
\begin{multline} \label{eq:lem10-2}
S(n,m)=2^{2 m-2 n-2} \binom{2 n}{n} n 
\sum_{i=0}^{m
-n
} \frac{2^{-2 i} \binom{2 i
+2 n
}{i
+n
}}{i
+2 n
}\\
+2^{2 m-2} \binom{2 n}{n}
+\frac{1}{4} \binom{2 m}{m} \binom{2 n}{n}
+\frac{1}{4} \binom{2 m
+2 n
}{m
+n
}
+2^{2 m+2 n-3}.
\end{multline}

\medskip

We note that the interaction of the summation steps~1--3 is
carried out at various places in a recursive manner. In order to free
the user from all these mechanical but rather subtle calculation
steps, the additional package \texttt{EvaluateMultiSums}~\cite{Schneider:13a} 
has been developed recently. It coordinates all these
calculation steps cleverly and discovers identities as above
completely automatically whenever such a simplification in terms of
nested sums over hypergeometric products is possible. For instance,
after loading the package 

\begin{mma}
\In << EvaluateMultiSum.m \\
\Print \LoadP{EvaluateMultiSums by Carsten Schneider
\copyright\ RISC-Linz}\\
\end{mma}

\noindent we can transform the sum~\eqref{Equ:DoubeSumOrg} into the
desired form by executing the function call 

\begin{mma}
\In res=EvaluateMultiSum[\tbinom {2 n} {n + i}
\tbinom {2 m} {m + j},\{\{i,0,j\},\{j,0,m\}\},\{m,n\},\{0,0\},\{n,\infty\}]\\
\Out \frac{(2 n+1)2^{2 m-3} (2 n)!}{n^2 ((n-1)!)^2} 
\sum_{i=1}^m \frac{2^{-2 i} \binom{2 i}{i}}{1
+i
+n
}\newline
\hspace*{1cm}+\frac{(4 n+3)2^{2 m-3} (2 n)!}{n^2 (n+1) ((n-1)!)^2}
+\frac{(3
+4 m
+2 n
)\binom{2 m}{m} (2 n)!}{8n^2 (1
+m
+n
) ((n-1)!)^2} 
+\frac{(2 m
+2 n
)!}{4n^2 ((n-1)!)^2 \big(
        (n+1)_m\big)^2}\\
\end{mma}
\noindent Here, {\tt Sigma} uses the {\it Pochhammer symbol} $(\al)_m$
defined by
\begin{equation} \label{eq:Poch} 
(\alpha)_m  = 
\begin{cases} 
\alpha(\hbox{$\alpha+1$})(\alpha+2)\cdots (\alpha+m-1),&
\text{for
$m>0$,}\\
1,&
\text{for
$m=0$,}\\
1/(\alpha-1)(\hbox{$\alpha-2$})(\alpha-3)\cdots (\alpha+m),&
\text{for
$m<0$,}
\end{cases}
\end{equation}
which we shall also use later.
The parameters $m,n$ in the calculation above are bounded from below by $0,0$
and from above by $n,\infty$, respectively. If one prefers a
representation purely in terms of binomial coefficients, one may execute 
the following function calls: 

\begin{mma}
\In res=SigmaReduce[res, m, 
 Tower \to \{\tbinom{2 m}{m}, \tbinom{2 n + 2 m}{ n + m}\}];\\
\In res=SigmaReduce[res, n, 
 Tower \to \{\tbinom{2 n}{n}\}];\\
\Out 2^{2 m-3} (2 n+1)\binom{2 n}{n}  
\sum_{i=1}^m \frac{2^{-2 i} \binom{2 i}{i}}{1
+i
+n
}
+\frac{(4 n+3)2^{2 m-3} \binom{2 n}{n}}{n+1} 
+\frac{(3
+4 m
+2 n
)\binom{2 m}{m} \binom{2 n}{n}}{8(1
+m
+n)
} 
+\frac{1}{4} \binom{2 m
+2 n
}{m
+n
}\\
\end{mma}

\noindent If one rewrites the arising sum manually by means of
the function call below,
one finally ends up exactly at the result given
in~\eqref{Equ:SumRepInM}:

\begin{mma}
\In res=SigmaReduce[res,m,Tower\to\{\sum_{i=1}^m \tfrac{2^{-2 i}
    \tbinom{2 i}{i}}{i 
        +n
        }\}]\\
\Out 2^{2 m-2} \binom{2 n}{n} n 
\sum_{i=1}^m \frac{2^{-2 i} \binom{2 i}{i}}{i
+n
}
+2^{2 m-1} \binom{2 n}{n}
+\frac{1}{4} \binom{2 m}{m} \binom{2 n}{n}
+\frac{1}{4} \binom{2 m
+2 n
}{m
+n
}\\
\end{mma}

Analogously one can carry out these calculation steps to
calculate the simplification given
in~\eqref{Equ:SumRepInDiff} automatically. 

\medskip

Comparison with Lemma~\ref{lem:10} reveals that \eqref{eq:lem10-1} or
\eqref{eq:lem10-2} do not quite agree with the right-hand side of
\eqref{eq:neu1}. For example, in order to prove that
\eqref{eq:lem10-1} is equivalent with \eqref{eq:neu1}, we would have
to establish the identity
$$
\frac {1} {8}
\sum_{l=0}^{n-m
} \binom{2 m
+2 l
}{m
+l
} \binom{2 n
-2 l
}{n-l}
=
2^{-2 m+2 n-2} \binom{2 m}{m} m 
\sum_{i=0}^{n-m
} \frac{2^{-2 i} \binom{2 i
+2 m
}{i
+m
}}{i
+2 m
}.
$$
This can, of course, be routinely achieved by using the
Paule and Schorn implementation~\cite{PaScAA} of Zeilberger's
algorithm. Alternatively, we may use our {\tt Sigma}
summation technology again. Let
$$T'(n,m):=\sum_{l=0}^{n-m
} \binom{2 m
+2 l
}{m
+l
} \binom{2 n
-2 l
}{n-l}.$$
The above described summation spiral leads to
$$
T'(n,m)=-2^{2 m+1} n \binom{2 n}{n} 
\sum_{i=0}^m \frac{2^{-2 i} \binom{2 i}{i}}{i
+n
}
+2 \binom{2 m}{m} \binom{2 n}{n}
+2^{2 m+2 n}.$$
If this relation is substituted in \eqref{Equ:SumRepInM}, then we
arrive exactly at the assertion of Lemma~\ref{lem:10}.

Clearly, the case where $m\ge n$ can be treated in a similar fashion.
This finishes the algorithmic proof of Lemma~\ref{lem:10}. \qed

\medskip


\section{Proof of Lemma~\ref{lem:10}
using complex contour integrals}
\label{sec:aux}

In this section, we show how to prove
Lemma~\ref{lem:10} by making use of complex contour
integrals. Before we can embark on the proof of the lemma, we
need to establish several auxiliary evaluations of specific
contour integrals.

\begin{remarkno}
In order to avoid a confusion of the summation index~$i$ with the usual
short notation for $\sqrt{-1}$, throughout this section we write $\mathbf i$
for $\sqrt{-1}$.
\end{remarkno}

\begin{lemma} \label{lem:11}
For all non-negative integers $n$, we have
\begin{equation} \label{eq:neu2} 
\frac {1} {2\pi \mathbf{i}}\int _{\mathcal C} ^{}
\frac {dz} {z^{n+1}(1-z)^{n+1}}
\frac {1} {(1-2z)}=2^{2n},
\end{equation}
where $\mathcal C$ is a contour close to $0$, which encircles
$0$ once in the positive direction.
\end{lemma}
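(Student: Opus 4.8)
The plan is to evaluate the integral by the residue theorem and then exploit a symmetry of the integrand, rather than expanding it into a binomial sum. Since $\mathcal{C}$ encircles only the pole at $z=0$, Lemma~\ref{lem:11} amounts to computing $\Res_{z=0} f(z)$, where $f(z)=\frac{1}{z^{n+1}(1-z)^{n+1}(1-2z)}$. The integrand has poles only at $z=0$ and $z=1$ (each of order $n+1$) and at $z=\tfrac12$ (simple), and it decays like $z^{-(2n+3)}$ as $z\to\ii$; hence the residue at infinity vanishes, and the sum of the three finite residues equals $0$. This reduces the problem to computing the two remaining residues in terms of $\Res_{z=0} f$.

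First I would compute the easy residue at the simple pole $z=\tfrac12$. Writing $1-2z=-2(z-\tfrac12)$ gives $\Res_{z=1/2} f=\frac{1}{(1/2)^{n+1}(1/2)^{n+1}}\cdot\frac{1}{-2}=-2^{2n+1}$. Next I would relate the residues at $0$ and $1$ through the involution $z\mapsto 1-z$. A direct check shows $f(1-z)=-f(z)$, since $z^{n+1}$ and $(1-z)^{n+1}$ are interchanged while $1-2z$ changes sign. Applying the coordinate change $w=1-z$ to the invariant $1$-form $f(z)\,dz$ (so that $f(z)\,dz=f(w)\,dw$ after accounting for $dz=-dw$ and $f(1-w)=-f(w)$) maps the pole at $z=1$ to $w=0$ and yields $\Res_{z=1} f=\Res_{z=0} f$.

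Combining these facts, the vanishing of the total residue reads $2\Res_{z=0} f-2^{2n+1}=0$, so $\Res_{z=0} f=2^{2n}$, which is exactly the asserted value. As an independent cross-check, or as a self-contained alternative, I could instead read off the residue as a Taylor coefficient, $\Res_{z=0} f=[z^n]\frac{1}{(1-z)^{n+1}(1-2z)}=\sum_{k=0}^{n}\binom{n+k}{k}2^{n-k}$, and then invoke the classical identity $\sum_{k=0}^{n}\binom{n+k}{k}2^{-k}=2^n$.

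The main obstacle in the primary approach is purely bookkeeping: getting the signs right in $f(1-z)=-f(z)$ and in the residue substitution rule, together with the (routine) estimate showing the integrand decays fast enough for the residue-at-infinity argument to apply for every $n\ge 0$. In the alternative route the difficulty merely shifts to supplying a proof of the binomial identity, which itself follows by a short induction or a generating-function argument.
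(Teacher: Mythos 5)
Your proof is correct and follows essentially the same route as the paper: both arguments amount to the observation that the residues of $f(z)=1/\bigl(z^{n+1}(1-z)^{n+1}(1-2z)\bigr)$ at $0$, $\tfrac12$, $1$ sum to zero (the paper phrases this as blowing up the contour to infinity), together with the explicit residue $-2^{2n+1}$ at $z=\tfrac12$ and the symmetry $z\mapsto 1-z$ identifying the residues at $0$ and $1$ (the paper realises this symmetry as the composite substitution $z\to 1+z$, $z\to -z$). The sign bookkeeping in your involution argument checks out, so no gap remains.
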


\begin{proof}
Let $I_1$ denote the expression on the left-hand side of
\eqref{eq:neu2}. We blow up the contour $\mathcal C$ so that
it is sent to infinity. While doing this, we must pass over
the poles $z=1/2$ and $z=1$ of the integrand. This must be
compensated by taking the residues at these points into account.
Since the integrand is of the order $O(z^{-2})$ as $\vert z\vert\to\infty$, 
the integral along the contour
near infinity vanishes. Thus, we obtain
\begin{align*} 
I_1&=
-\Res_{z=1/2}
\frac {1} {z^{n+1}(1-z)^{n+1}}
\frac {1} {(1-2z)}
-\Res_{z=1}
\frac {1} {z^{n+1}(1-z)^{n+1}}
\frac {1} {(1-2z)}\\
&=
2^{2n+1}
-\frac {1} {2\pi \mathbf{i}}\int _{\mathcal C} ^{}
\frac {1} {(1+z)^{n+1}(1-(1+z))^{n+1}}
\frac {1} {(1-2(1+z))}\,dz.
\end{align*}
As the substitution $z\to -z$ shows, the last integral is
identical with $I_1$. Thus, we have obtained an equation
for $I_1$, from which we easily get the claimed result.
\end{proof}

\begin{lemma} \label{lem:12}
For all non-negative integers $n$ and $m$, we have
\begin{equation} \label{eq:neu3} 
\frac {1} {(2\pi \mathbf{i})^2}\int _{\mathcal C_1} ^{}\int _{\mathcal C_2}
\frac {1} {(u-t)}
\frac {du} {u^{n+1}(1-u)^{n+1}}
\frac {dt} {t^{m}(1-t)^{m}}=
-\frac {1} {2}\binom {2n+2m}{n+m},
\end{equation}
where $\mathcal C_1$ and $\mathcal C_2$ are contours close to $0$,
which encircle $0$ once in the positive direction, and $C_2$ is entirely
in the interior of $C_1$.
\end{lemma}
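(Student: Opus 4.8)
The plan is to set up a self-referential equation for the integral, exactly as in the proof of Lemma~\ref{lem:11}. Write $J$ for the left-hand side of \eqref{eq:neu3}, and let $J'$ be the integral obtained from the same integrand by interchanging the roles of the two contours (so that now the $t$-contour lies inside the $u$-contour). On the one hand, pulling the inner contour across the diagonal $u=t$ produces, by the residue theorem, the simple pole at $u=t$; since
$$
\Res_{u=t}\frac{1}{(u-t)\,u^{n+1}(1-u)^{n+1}}=\frac{1}{t^{n+1}(1-t)^{n+1}},
$$
I obtain
$$
J'-J=\frac{1}{2\pi i}\int_{\mathcal C}\frac{dt}{t^{n+m+1}(1-t)^{n+m+1}}=\binom{2n+2m}{n+m},
$$
the final coefficient extraction being of the same type as in Lemma~\ref{lem:11}. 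On the other hand, the substitution $u\mapsto 1-u$, $t\mapsto 1-t$ fixes each of the factors $u^{n+1}(1-u)^{n+1}$ and $t^m(1-t)^m$ while sending $\frac{1}{u-t}$ to $\frac{1}{t-u}$, so that the integrand is replaced by its negative. After deforming the resulting contours, which now encircle $1$, back to contours encircling $0$, this should identify $J'$ with $-J$. Combining the two relations gives $-2J=\binom{2n+2m}{n+m}$, that is, $J=-\frac12\binom{2n+2m}{n+m}$, as claimed.

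I would keep, as a back-up and as an independent check, the direct evaluation by iterated residues. Since $\mathcal C_2$ lies inside $\mathcal C_1$, the inner variable is dominated by the outer one, so $\frac{1}{u-t}$ may be expanded as a geometric series in the quotient of the two variables; integrating out the inner variable then extracts the residue at $0$ (the poles at $1$ and at $u=t$ lying outside the inner contour), and a second coefficient extraction in the outer variable collapses the whole expression to the single binomial sum $\sum_{k=0}^{n}\binom{2n-k}{n-k}\binom{2m+k-1}{m-1}$. In this formulation the lemma becomes the identity
$$
\sum_{k=0}^{n}\binom{2n-k}{n-k}\binom{2m+k-1}{m-1}=\frac12\binom{2n+2m}{n+m},
$$
which can be attacked by generating functions: writing $\binom{2n-k}{n-k}=\coef{x^{n-k}}(1-x)^{-(n+1)}$ and $\binom{2m+k-1}{m-1}=\coef{y^{m+k}}(1-y)^{-m}$ exhibits the sum as a truncation of a Cauchy product whose full value is $\coef{y^{n+m}}(1-y)^{-(n+m+1)}=\binom{2n+2m}{n+m}$; the complementary tail of that product is, after a reflection of the summation index, equal to the sum itself, so that each equals one half of $\binom{2n+2m}{n+m}$.

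The main obstacle in both routes is the same phenomenon, namely the appearance of the factor $\tfrac12$. In the contour approach it is concentrated in the step $J'=-J$: one must check that deforming the circles around $1$ back to circles around $0$ under $z\mapsto 1-z$ reproduces precisely a global sign, with the diagonal contribution already accounted for and no stray residues left over. In the combinatorial approach the same difficulty reappears as the claim that the head and the tail of the Cauchy product coincide --- the ``equal splitting'' of the central binomial coefficient $\binom{2n+2m}{n+m}$ into two identical halves. Everything else (the two coefficient extractions, the residue at $u=t$, and the bookkeeping of which poles lie inside which contour) I expect to be routine.
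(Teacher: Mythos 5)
Your overall strategy---deriving two independent linear relations between $J$ and its contour-swapped companion $J'$ and solving for $J$---is reasonable, and the first relation is fine: moving the inner contour across the diagonal picks up $\Res_{u=t}$ and gives $J'-J=\binom{2n+2m}{n+m}$. The gap is in the second relation. The identity $J'=-J$ is indeed true, but in your setup it carries exactly the same information as the lemma (together with the first relation it is equivalent to $J=-\frac12\binom{2n+2m}{n+m}$), and your justification does not establish it. The substitution $u\mapsto 1-u$, $t\mapsto 1-t$ does negate the integrand, but it transports both contours to small circles around $1$, and a circle around $1$ cannot be deformed back to a circle around $0$ without crossing poles of the integrand. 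If you carry out the comparison correctly (for instance via ``the sum of all residues vanishes'' in the $u$-variable, using that the integrand is $O(u^{-2})$ at infinity), the pole at $u=t$ contributes a second time, and what the deformation actually yields is $-J=J+\binom{2n+2m}{n+m}$ --- that is, the ``stray residues'' you hope are absent are precisely where the factor $\frac12$ is hiding. So the crucial step is asserted, not proved. The same gap reappears verbatim in your back-up route: the head and the tail of the Cauchy product are \emph{not} related by a reflection of the summation index. For $n=2$, $m=1$ the head consists of the terms $\binom{4}{2}+\binom{3}{2}+\binom{2}{2}=6+3+1$ while the tail is the single term $\binom{5}{2}\binom{0}{0}=10$; the two sums agree, but there is no term-by-term correspondence, so ``equal splitting'' is exactly the identity to be proved rather than a symmetry one can read off. (A further warning sign: for $m=0$ the sum equals $\binom{2n}{n}$, not $\tfrac12\binom{2n}{n}$, so no argument that treats $m=0$ and $m\ge1$ uniformly by pure symmetry can be complete.)

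For comparison, the paper closes this gap by a different symmetrization. It averages the integral with the one obtained by exchanging $u$ and $t$ \emph{together with their contours} (which interchanges the exponents $n+1$ and $m$), moves one contour across the other to put both terms under a single integral (collecting the diagonal residue $-\frac12\binom{2n+2m}{n+m}$), and then observes that the symmetrized remainder contains the factor
$$
\frac{\bigl(t(1-t)\bigr)^{n-m+1}-\bigl(u(1-u)\bigr)^{n-m+1}}{u-t}
=(u+t-1)\sum_{\ell=0}^{n-m}\bigl(t(1-t)\bigr)^{\ell}\bigl(u(1-u)\bigr)^{n-m-\ell},
$$
which reduces everything to elementary coefficient extractions that cancel pairwise via $\binom{2k}{k}=2\binom{2k-1}{k}$. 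Some such explicit computation (or an equivalent one, e.g.\ a Chu--Vandermonde-type evaluation of the truncated sum) is unavoidable; neither of your two symmetry arguments supplies it.
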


\begin{proof}
We treat here the case where $n\ge m$. The other case can be disposed
of completely analogously.

Let $I_2$ denote the expression on the left-hand side of
\eqref{eq:neu3}. Clearly, interchange of $u$ and $t$ in the
integrand does not change $I_2$. In that case however, we must also
interchange the corresponding contours.
Hence, $I_2$ is also equal
to one half of the sum of the original expression and the
one where $u$ and $t$ are exchanged, that is,
\begin{multline*}
I_2=
\frac {1} {2\,(2\pi \mathbf{i})^2}\int _{\mathcal C_1} ^{}\int _{\mathcal C_2}
\frac {1} {(u-t)}
\frac {du} {u^{n+1}(1-u)^{n+1}}
\frac {dt} {t^{m}(1-t)^{m}}\\
-
\frac {1} {2\,(2\pi \mathbf{i})^2}\int _{\mathcal C_2} ^{}\int _{\mathcal C_1}
\frac {1} {(u-t)}
\frac {dt} {t^{n+1}(1-t)^{n+1}}
\frac {du} {u^{m}(1-u)^{m}}.
\end{multline*}
We would like to put both expressions under one integral.
In order to do so, we must blow up the contour $C_2$ in the
second integral (the contour for $t$) so that it passes
across $C_1$. When doing so, the term $u-t$ in the denominator
will vanish, and so we shall collect a residue at $t=u$.
This yields
{\allowdisplaybreaks
\begin{align*} \label{}
I_2&=
\frac {1} {2\,(2\pi \mathbf{i})^2}\int _{\mathcal C_1} ^{}\int _{\mathcal C_2}
\frac {du\,dt} {(u-t)\,\big(u(1-u)\,t(1-t)\big)^{n+1}}
\left(
\big(t(1-t)\big)^{n-m+1}
-
\big(u(1-u)\big)^{n-m+1}
\right)\\
&\kern1cm
+
\frac {1} {2\,(2\pi \mathbf{i})}\int _{\mathcal C_1}
\Res_{t=u}\frac {1} {(u-t)}
\frac {dt} {t^{n+1}(1-t)^{n+1}}
\frac {du} {u^{m}(1-u)^{m}}\\
&=
\frac {1} {2\,(2\pi \mathbf{i})^2}\int _{\mathcal C_1} ^{}\int _{\mathcal C_2}
\frac {du\,dt\,(u+t-1)} {\big(u(1-u)\,t(1-t)\big)^{n+1}}
\sum_{\ell=0} ^{n-m}
\big(t(1-t)\big)^{\ell}
\big(u(1-u)\big)^{n-m-\ell}\\
&\kern1cm
-\frac {1} {2\,(2\pi \mathbf{i})}\int _{\mathcal C_1}
\frac {du} {u^{n+m+1}(1-u)^{n+m+1}}\\
&=
\sum_{\ell=0} ^{n-m}
\frac {1} {2\,(2\pi \mathbf{i})^2}\int _{\mathcal C_1} ^{}\int _{\mathcal C_2}
\frac {du\,dt} {u^{m+\ell}(1-u)^{m+\ell+1}
\big(t(1-t)\big)^{n-\ell+1}}\\
&\kern1cm
-
\sum_{\ell=0} ^{n-m}
\frac {1} {2\,(2\pi \mathbf{i})^2}\int _{\mathcal C_1} ^{}\int _{\mathcal C_2}
\frac {du\,dt} {\big(u(1-u)\big)^{m+\ell+1}t^{n-\ell+1}(1-t)^{n-\ell}}
-\frac {1} {2}\binom {2n+2m}{n+m}\\
&=
\frac {1} {2}
\sum_{\ell=0} ^{n-m}
\binom {2n-2\ell} {n-\ell}
\binom {2m+2\ell-1} {m+\ell}
-
\frac {1} {2}
\sum_{\ell=0} ^{n-m}
\binom {2n-2\ell-1} {n-\ell-1}
\binom {2m+2\ell} {m+\ell}\\
&\kern1cm
-\frac {1} {2}\binom {2n+2m}{n+m}
=-\frac {1} {2}\binom {2n+2m}{n+m},
\end{align*}}%
the last equality following from $\binom {2k}k=2\binom {2k-1}k$.
\end{proof}

\begin{lemma} \label{lem:13}
For all non-negative integers $n$ and $m$ with $n\ge m$, we have
\begin{multline} \label{eq:neu4} 
\frac {1} {(2\pi \mathbf{i})^2}\int _{\mathcal C_1}\int _{\mathcal C_2}
^{}
\frac {1} {(u-t)(1-2t)}
\frac {du} {u^{n+1}(1-u)^{n+1}}
\frac {dt} {t^{m}(1-t)^{m}}\\
=-
\frac {1} {4}
\sum_{\ell=0} ^{n-m}
\binom {2n-2\ell}{n-\ell}
\binom {2m+2\ell} {m+\ell}
-
3\cdot2^{2n+2m-2},
\end{multline}
where $\mathcal C_1$ and $\mathcal C_2$ are contours close to $0$,
which encircle $0$ once in the positive direction, and $C_2$ is entirely
in the interior of $C_1$. The sum on the right-hand side must be
interpreted according to
\begin{equation} \label{eq:SUM} 
\sum _{k=M} ^{N-1}\text {\rm Expr}(k)=\begin{cases} 
\hphantom{-}
\sum _{k=M} ^{N-1} \text {\rm Expr}(k),&N>M,\\
\hphantom{-}0,&N=M,\\
-\sum _{k=N} ^{M-1}\text {\rm Expr}(k),&N<M.\end{cases}
\end{equation}
\end{lemma}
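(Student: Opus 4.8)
Denote the left-hand side of \eqref{eq:neu4} by $I$, keeping the convention (forced by the placement of the differentials, and already used in the proof of Lemma~\ref{lem:12}) that the inner variable $u$ runs over $\mathcal C_2$ while the outer variable $t$ runs over $\mathcal C_1$, so that $|u|<|t|$ on the contours. The plan is to evaluate the two integrations one after the other by residues, thereby collapsing $I$ to a single explicit binomial sum, and then to identify that sum with the right-hand side of \eqref{eq:neu4} by a hypergeometric summation. A convenient preliminary step is the partial-fraction identity
\[
\frac{1}{(u-t)(1-2t)}=\frac{1}{1-2u}\(\frac{1}{u-t}-\frac{2}{1-2t}\),
\]
whose second piece $-\tfrac{2}{(1-2u)(1-2t)}$ factors into a product of two single integrals, each handled by Lemma~\ref{lem:11} (the $u$-integral giving $2^{2n}$, the $t$-integral giving $2^{2m-2}$); this isolates the genuinely coupled part of the kernel.

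I would then carry out the residue calculation directly. Integrating the inner variable $u$ first, the only pole of the integrand inside $\mathcal C_2$ is the pole of order $n+1$ at $u=0$: the pole at $u=t$ lies \emph{outside} $\mathcal C_2$ precisely because $|t|>|u|$, and the poles at $u=\tfrac12,1$ lie outside as well. Expanding $\frac{1}{u-t}=-\sum_{k\ge0}u^{k}t^{-k-1}$ and extracting $\coef{u^{n}}$ of $\frac{1}{(u-t)(1-u)^{n+1}}$ produces a Laurent polynomial in $t$, namely $-\frac{1}{1-2t}\sum_{k=0}^{n}\binom{2n-k}{n}t^{-k-1}$. Integrating this against $\frac{dt}{t^{m}(1-t)^{m}}$ over the outer contour $\mathcal C_1$ (whose only enclosed pole is $t=0$) gives the closed reduction
\[
I=-\sum_{k=0}^{n}\binom{2n-k}{n}\,\coef{t^{m+k}}\frac{1}{(1-2t)(1-t)^{m}} .
\]

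The remaining and main obstacle is to show that this equals the right-hand side of \eqref{eq:neu4}. Splitting the inner coefficient by the partial fraction $\frac{1}{(1-2t)(1-t)^{m}}=\frac{2^{m}}{1-2t}+\sum_{j=1}^{m}\frac{b_{j}}{(1-t)^{j}}$, the ``geometric'' term $\frac{2^{m}}{1-2t}$ contributes $\coef{t^{m+k}}=2^{2m+k}$, and summing it is pinned down cleanly by the elementary identity $\sum_{k=0}^{n}\binom{2n-k}{n}2^{k}=4^{n}$, which accounts for the pure power of two in \eqref{eq:neu4}. The polynomial-in-$k$ remainder coming from the $b_{j}$-terms must then be matched, up to a single power of two, with $\tfrac14$ times the sum $S:=\sum_{\ell=0}^{n-m}\binom{2n-2\ell}{n-\ell}\binom{2m+2\ell}{m+\ell}$ appearing on the right of \eqref{eq:neu4} (this sum being read according to \eqref{eq:SUM}). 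This matching is a genuine single-variable hypergeometric identity; I would settle it either by Zeilberger's creative telescoping, exactly the technology of Section~\ref{sec:Sigma}, or, to stay within the present complex-analytic framework, by writing each central binomial coefficient in $S$ as a contour integral $\binom{2k}{k}=\frac{1}{2\pi i}\int z^{-k-1}(1-z)^{-k-1}\,dz$ and comparing the resulting residues term by term.

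As built-in consistency checks one can verify the extreme cases: for $n=m$ the sum $S$ degenerates to the single term $\binom{2n}{n}^{2}$, while for $m=0$ the whole identity reduces to the classical convolution $\sum_{\ell=0}^{n}\binom{2\ell}{\ell}\binom{2n-2\ell}{n-\ell}=4^{n}$, and a direct hand computation at $n=m=1$ gives $I=-13$, in agreement with $-\tfrac14 S-3\cdot2^{2n+2m-2}$. These confirm both the residue reduction and the precise constant $-3\cdot2^{2n+2m-2}$, and localise all of the remaining difficulty in the one binomial summation described above.
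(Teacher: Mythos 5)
Your residue reduction is correct and genuinely different from the paper's argument: with the convention $|u|<|t|$ you correctly obtain
\begin{equation*}
I=-\sum_{k=0}^{n}\binom{2n-k}{n}\,\coef{t^{m+k}}\frac{1}{(1-2t)(1-t)^{m}},
\end{equation*}
and your numerical checks ($n=m=1$ giving $-13$, and the $m=0$ degeneration to $-4^{n}$) are right. However, there is a genuine gap at exactly the point where the lemma earns its keep: you never prove that this single sum equals $-\tfrac14\sum_{\ell=0}^{n-m}\binom{2n-2\ell}{n-\ell}\binom{2m+2\ell}{m+\ell}-3\cdot2^{2n+2m-2}$. After splitting off the $\tfrac{2^{m}}{1-2t}$ part (which contributes $-2^{2n+2m}$, not yet the stated $-3\cdot2^{2n+2m-2}$), the remainder is
\begin{equation*}
\sum_{k=0}^{n}\binom{2n-k}{n}\sum_{j=0}^{m-1}2^{j}\binom{2m+k-j-1}{m-j-1},
\end{equation*}
i.e.\ a \emph{double} sum whose inner part is a nested, non-hypergeometric object in $k$; calling this ``a genuine single-variable hypergeometric identity'' and invoking Zeilberger is not accurate, since plain creative telescoping does not apply to such a summand, and the target $S$ is itself only a \emph{partial} convolution (upper limit $n-m$, not $n$), so it has no standard closed form to compare against. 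One would have to find and verify a common recurrence (in $n$, or in $n-m$) for both sides, or find another route; none of this is done or even set up concretely. Since the precise $\ell$-sum from $0$ to $n-m$ is the whole content of the lemma and is what feeds Lemmas~\ref{lem:10}--\ref{lem:22} and Proposition~\ref{prop:0}, the proof is incomplete at its central step.

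For contrast, the paper avoids any such matching problem by symmetrising: writing $I_5$ as one half of the integral plus its $u\leftrightarrow t$-swapped version, plus the residue at $t=u$. The factorisation $u(1-u)-t(1-t)=(u-t)(1-u-t)$ cancels the coupling denominator, and the finite geometric factorisation of $\big(t(1-t)\big)^{n-m+1}-\big(u(1-u)\big)^{n-m+1}$ makes the sum over $\ell=0,\dots,n-m$ appear automatically, with each term splitting into a product of single integrals evaluated by Lemmas~\ref{lem:11} and \ref{lem:15}. If you want to salvage your iterated-residue route, you must either supply a complete proof of the remaining binomial identity or switch to a mechanism (such as the symmetrisation above) that produces the $\ell$-sum structurally rather than by after-the-fact identification.
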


\begin{proof}
Again, here we treat the case where $n\ge m$. The other case can be disposed
of completely analogously.

Let $I_3$ denote the expression on the left-hand side of
\eqref{eq:neu4}. We apply the same trick as in the proof of
Lemma~\ref{lem:12} and observe that $I_3$ is equal
to one half of the sum of the original expression and the
one where $u$ and $t$ are exchanged, plus the residue of the latter 
at $t=u$. To be precise,
{\allowdisplaybreaks
\begin{align*} \label{}
I_3&=
\frac {1} {2\,(2\pi \mathbf{i})^2}\int _{\mathcal C_1} ^{}\int _{\mathcal C_2}
\frac {du\,dt}
      {(u-t)\,(1-2u)\,(1-2t)\,\big(u(1-u)\,t(1-t)\big)^{n+1}}\\
&\kern4cm
\cdot
\left(
(1-2u)\,\big(t(1-t)\big)^{n-m+1}
-
(1-2t)\,\big(u(1-u)\big)^{n-m+1}
\right)\\
&\kern1cm
+
\frac {1} {2\,(2\pi \mathbf{i})^2}\int _{\mathcal C_1}
^{}
\Res_{t=u}
\frac {1} {(u-t)(1-2u)}
\frac {1} {t^{n+1}(1-t)^{n+1}}
\frac {du} {u^{m}(1-u)^{m}}\\
&=
\frac {1} {2\,(2\pi \mathbf{i})^2}\int _{\mathcal C_1} ^{}\int _{\mathcal C_2}
\frac {du\,dt}
      {(u-t)\,(1-2t)\,\big(u(1-u)\,t(1-t)\big)^{n+1}}\\
&\kern4cm
\cdot
\left(
\big(t(1-t)\big)^{n-m+1}
-
\big(u(1-u)\big)^{n-m+1}
\right)\\
&\kern1cm
-
\frac {1} {(2\pi \mathbf{i})^2}\int _{\mathcal C_1} ^{}\int _{\mathcal C_2}
\frac {du\,dt}
      {(1-2u)\,(1-2t)\,\big(u(1-u)\big)^{m}\big(t(1-t)\big)^{n+1}}
\\
&\kern1cm
-
\frac {1} {2\,(2\pi \mathbf{i})^2}\int _{\mathcal C_1}
^{}
\frac {1} {(1-2u)}
\frac {du} {u^{n+m+1}(1-u)^{n+m+1}}\\
&=
\frac {1} {2\,(2\pi \mathbf{i})^2}\int _{\mathcal C_1} ^{}\int _{\mathcal C_2}
\frac {du\,dt\,(u+t-1)} {(1-2t)\,\big(u(1-u)\,t(1-t)\big)^{n+1}}
\sum_{\ell=0} ^{n-m}
\big(t(1-t)\big)^{\ell}
\big(u(1-u)\big)^{n-m-\ell}\\
&\kern4cm
-2^{2m-2+2n}-2^{2n+2m-1}\\
&=
\sum_{\ell=0} ^{n-m}
\frac {1} {2\,(2\pi \mathbf{i})^2}\int _{\mathcal C_1'} ^{}\int _{\mathcal C_2'}
\frac {du\,dt} {(1-2t)\,u^{m+\ell}(1-u)^{m+\ell+1}
\big(t(1-t)\big)^{n-\ell+1}}\\
&\kern1cm
-
\sum_{\ell=0} ^{n-m}
\frac {1} {2\,(2\pi \mathbf{i})^2}\int _{\mathcal C_1'} ^{}\int _{\mathcal C_2'}
\frac {du\,dt} {(1-2t)\,\big(u(1-u)\big)^{m+\ell+1}
t^{n-\ell+1}(1-t)^{n-\ell}}
-3\cdot2^{2m+2n-2}\\
&=
\frac {1} {2}
\sum_{\ell=0} ^{n-m}
\binom {2m+2\ell-1} {m+\ell}
2^{2n-2\ell}\\
&\kern1cm
-
\frac {1} {2}
\sum_{\ell=0} ^{n-m}
\binom {2m+2\ell} {m+\ell}
\left(2^{2n-2\ell-1}+\frac {1} {2}\binom {2n-2\ell}{n-\ell}\right)
-
3\cdot2^{2n+2m-2}\\
&=
-
\frac {1} {4}
\sum_{\ell=0} ^{n-m}
\binom {2n-2\ell}{n-\ell}
\binom {2m+2\ell} {m+\ell}
-
3\cdot2^{2n+2m-2},
\end{align*}}%
which is again seen by observing $\binom {2k}k=2\binom {2k-1}k$.
\end{proof}

We are now in the position to prove Lemma~\ref{lem:10} from
Section~\ref{sec:fund}.

\begin{proof}[Proof of Lemma~\ref{lem:10}]
Using complex contour integrals, we may write
\begin{align*} 
\sum_{0\le i\le j}
\binom {2n}{n+i}
\binom {2m}{m+j}
&=
\sum_{0\le i\le j}
\binom {2n}{n-i}
\binom {2m}{m-j}\\
&=\sum_{0\le i\le j}
\frac {1} {(2\pi \mathbf{i})^2}\int _{\mathcal C_1} ^{}\int _{\mathcal C_2}
^{}
\frac {(1+x)^{2n}} {x^{n-i+1}}
\frac {(1+y)^{2m}} {y^{m-j+1}}\,dx\,dy\\
&=\frac {1} {(2\pi \mathbf{i})^2}\int _{\mathcal C_1} ^{}\int _{\mathcal C_2}
^{}
\frac {(1+x)^{2n}} {x^{n+1}}
\frac {(1+y)^{2m}} {y^{m+1}}
\frac {dx\,dy} {(1-xy)(1-y)},
\end{align*}
where $\mathcal C_1$ and $\mathcal C_2$ are contours close to $0$,
which encircle $0$ once in the positive direction.

Now we do the substitutions $x=u/(1-u)$ and $y=t/(1-t)$,
implying $dx=du/(1-u)^2$ and $dy=dt/(1-t)^2$.
This leads to
\begin{align} 
\notag
\sum_{0\le i\le j}
\binom {2n}{n+i}
&\binom {2m}{m+j}\\
\notag
&=\frac {1} {(2\pi \mathbf{i})^2}\int _{\mathcal C_1'} ^{}\int _{\mathcal C_2'}
^{}
\frac {du} {u^{n+1}(1-u)^{n+1}}
\frac {dt} {t^{m+1}(1-t)^{m+1}}
\frac {(1-u)(1-t)^2} {(1-u-t)(1-2t)}\\
\notag
&=
\frac {1} {2\,(2\pi \mathbf{i})^2}\int _{\mathcal C_1'} ^{}\int _{\mathcal C_2'}
^{}
\frac {du} {u^{n+1}(1-u)^{n+1}}
\frac {dt} {t^{m+1}(1-t)^{m+1}}\\
\notag
&\kern1cm
-\frac {1} {(2\pi \mathbf{i})^2}\int _{\mathcal C_1'} ^{}\int _{\mathcal C_2'}
^{}
\frac {du} {u^{n+1}(1-u)^{n+1}}
\frac {dt} {t^{m}(1-t)^{m}}
\frac {1} {(1-2t)}\\
\notag
&\kern1cm
+
\frac {1} {2\,(2\pi \mathbf{i})^2}\int _{\mathcal C_1'} ^{}\int _{\mathcal C_2'}
^{}
\frac {du} {u^{n+1}(1-u)^{n+1}}
\frac {dt} {t^{m+1}(1-t)^{m+1}}
\frac {1} {(1-2t)}\\
\notag
&\kern1cm
+
\frac {1} {2\,(2\pi \mathbf{i})^2}\int _{\mathcal C_1'} ^{}\int _{\mathcal C_2'}
^{}
\frac {du} {u^{n+1}(1-u)^{n+1}}
\frac {dt} {t^{m}(1-t)^{m}}
\frac {1} {(1-u-t)}\\
&\kern1cm
+
\frac {1} {2\,(2\pi \mathbf{i})^2}\int _{\mathcal C_1'} ^{}\int _{\mathcal C_2'}
^{}
\frac {du} {u^{n+1}(1-u)^{n+1}}
\frac {dt} {t^{m}(1-t)^{m}}
\frac {1} {(1-u-t)(1-2t)}.
\label{eq:int0}
\end{align}
We now discuss the evaluation of the five integrals on the right-hand
side one by one. First of all, we have
\begin{align} 
\notag
\frac {1} {2\,(2\pi \mathbf{i})^2}\int _{\mathcal C_1'} ^{}\int _{\mathcal C_2'}
^{}
\frac {du} {u^{n+1}(1-u)^{n+1}}
\frac {dt} {t^{m+1}(1-t)^{m+1}}
&=\frac {1} {2}
\coef{u^n}(1-u)^{-n-1}\,\coef{t^m}(1-t)^{-m-1}\\
&=
\frac {1} {2}\binom {2n}n\binom {2m}m.
\label{eq:int1} 
\end{align}
Next, by Lemma~\ref{lem:11}, we have
\begin{equation} \label{eq:int2}
\frac {1} {(2\pi \mathbf{i})^2}\int _{\mathcal C_1'} ^{}\int _{\mathcal C_2'}
^{}
\frac {du} {u^{n+1}(1-u)^{n+1}}
\frac {dt} {t^{m}(1-t)^{m}}
\frac {1} {(1-2t)}
=2^{2m-2}\binom {2n}n
\end{equation}
and
\begin{equation} \label{eq:int3} 
\frac {1} {2\,(2\pi \mathbf{i})^2}\int _{\mathcal C_1'} ^{}\int _{\mathcal C_2'}
^{}
\frac {du} {u^{n+1}(1-u)^{n+1}}
\frac {dt} {t^{m+1}(1-t)^{m+1}}
\frac {1} {(1-2t)}
=
2^{2m-1}\binom {2n}n.
\end{equation}

In order to evaluate
\begin{equation*} 
I_4:=\frac {1} {2\,(2\pi \mathbf{i})^2}\int _{\mathcal C_1'} ^{}\int _{\mathcal C_2'}
^{}
\frac {du} {u^{n+1}(1-u)^{n+1}}
\frac {dt} {t^{m}(1-t)^{m}}
\frac {1} {(1-u-t)} ,
\end{equation*}
we blow up the contour $\mathcal C_1'$ (the contour for $u$) so that
it is sent to infinity. While doing this, we pass over
the poles $u=1-t$ and $u=1$ of the integrand. This must be
compensated by taking the residues at these points into account.
Since the integrand is of the order $O(u^{-2})$ as $\vert u\vert\to\infty$, 
the integral along the contour
near infinity vanishes. Thus, we obtain
\begin{align} 
\notag
I_4&=
-\frac {1} {2\,(2\pi \mathbf{i})}\int _{\mathcal C_2'} ^{}
\Res_{u=1-t}
\frac {1} {u^{n+1}(1-u)^{n+1}}
\frac {dt} {t^{m}(1-t)^{m}}
\frac {1} {(1-u-t)} \\
\notag
&\kern1cm
-\frac {1} {2\,(2\pi \mathbf{i})}\int _{\mathcal C_2'} ^{}
\Res_{u=1}
\frac {1} {u^{n+1}(1-u)^{n+1}}
\frac {dt} {t^{m}(1-t)^{m}}
\frac {1} {(1-u-t)} \\
\notag
&=
\frac {1} {2\,(2\pi \mathbf{i})}\int _{\mathcal C_2'} ^{}
\frac {dt} {t^{n+m+1}(1-t)^{n+m+1}}\\
\notag
&\kern1cm
-\frac {1} {2\,(2\pi \mathbf{i})^2}\int _{\mathcal C_1'} ^{}\int _{\mathcal C_2'}
\frac {du} {(1+u)^{n+1}(1-(1+u))^{n+1}}
\frac {dt} {t^{m}(1-t)^{m}}
\frac {1} {(1-(1+u)-t)} \\
&=\frac {1} {2}\binom {2n+2m} {n+m}-\frac {1} {4}\binom {2n+2m} {n+m}
=
\frac {1} {4}\binom {2n+2m} {n+m},
\label{eq:int4}
\end{align}
which is seen by performing the substitution $u\to -u$ in
the second expression in the next-to-last line and applying
Lemma~\ref{lem:12}.

Finally, in order to evaluate
\begin{equation} \label{eq:I_5}
I_5:=\frac {1} {2\,(2\pi \mathbf{i})^2}\int _{\mathcal C_1'} ^{}\int _{\mathcal C_2'}
^{}
\frac {du} {u^{n+1}(1-u)^{n+1}}
\frac {dt} {t^{m}(1-t)^{m}}
\frac {1} {(1-u-t)(1-2t)}
\end{equation}
we again blow up the contour $\mathcal C_1$ so that
it is sent to infinity. While doing this, we pass over
the poles $u=1-t$ and $u=1$ of the integrand. This must be
compensated by taking the residues at these points into account.
Since the integrand is of the order $O(u^{-2})$ as $\vert u\vert\to\infty$, 
the integral along the contour
near infinity vanishes. Thus, we obtain
\begin{align} 
\notag
I_5&=
-\frac {1} {2\,(2\pi \mathbf{i})}\int _{\mathcal C_2'}
^{}
\Res_{u=1-t}
\frac {1} {u^{n+1}(1-u)^{n+1}}
\frac {dt} {t^{m}(1-t)^{m}}
\frac {1} {(1-u-t)(1-2t)}\\
\notag
&\kern1cm
-\frac {1} {2\,(2\pi \mathbf{i})}\int _{\mathcal C_2'}
^{}
\Res_{u=1}
\frac {1} {u^{n+1}(1-u)^{n+1}}
\frac {dt} {t^{m}(1-t)^{m}}
\frac {1} {(1-u-t)(1-2t)}\\
\notag
&=
\frac {1} {2\,(2\pi \mathbf{i})}\int _{\mathcal C_2'}
^{}
\frac {dt} {t^{n+m+1}(1-t)^{n+m+1}}
\frac {1} {(1-2t)}\\
\notag
&\kern1cm
-\frac {1} {2\,(2\pi \mathbf{i})^2}\int _{\mathcal C_1'}\int _{\mathcal C_2'}
^{}
\frac {du} {(1+u)^{n+1}(1-(1+u))^{n+1}}
\frac {dt} {t^{m}(1-t)^{m}}
\frac {1} {(1-(1+u)-t)(1-2t)}\\
&=
2^{2n+2m-1}
-
\frac {1} {8}
\sum_{\ell=0} ^{n-m}
\binom {2n-2\ell}{n-\ell}
\binom {2m+2\ell} {m+\ell}
-
3\cdot2^{2n+2m-3},
\label{eq:int5}
\end{align}
which is seen by applying Lemma~\ref{lem:11} to the first expression
in the next-to-last line, performing the substitution $u\to-u$ in the second
expression, and applying Lemma~\ref{lem:13}.
By combining \eqref{eq:int0}--\eqref{eq:int5} and simplifying,
we obtain the right-hand side of \eqref{eq:neu1}.
\end{proof}

\section{Main results}
\label{sec:main}

This section contains our main results concerning double sums
of the form 
$$
\sum_{0\le i\le j}
i^sj^t
\binom {2n}{n+i}
\binom {2m}{m+j}.
$$
If both $s$ and $t$ are even, then we are only able to provide a
result in the special case where $m=n$. (It would also be possible to
provide a similar result for the case where the difference $n-m$ is
some fixed integer.) The reason is that the
identity in Lemma~\ref{lem:10}, 
on which an evaluation of the above sum will have to be based, contains the sum
over~$\ell$ that cannot be simplified if $n$ and $m$ are generic.
Proposition~\ref{prop:0} restricts attention to this special case.
On the other hand, if $s$ and $t$ are not both even, then it is possible
to provide a general result for the above double sum without any
restriction on $n$ and~$m$. The evaluations are then based on
Lemmas~\ref{lem:20}--\ref{lem:22}, and the corresponding results are
presented in Proposition~\ref{Prop:0}. It should be noted that, for
the three cases of parity of $s$ and $t$ that are treated in both
propositions, it is not true that Proposition~\ref{prop:0} is a
direct consequence of Proposition~\ref{Prop:0} as the assertions
in Proposition~\ref{prop:0} are more refined.

\begin{proposition} \label{prop:0}
For all non-negative integers $s,t,k$ and $n$, we have
\begin{multline} \label{eq:aux} 
\sum_{0\le i\le j\le n}
i^sj^t
\binom {2n}{n+i}
\binom {2n}{n+j}
=\frac {P^{(1)}_{s,t}(n)}{(4n-1)(4n-3)\cdots(4n-2S-2T+1)}{\binom {4n}{2n}}
\\
+
\frac {P^{(2)}_{s,t}(n)}{(2n-1)(2n-3)\cdots
(2n-2\fl{(S+T)/2}+1)}{\binom {2n}n}^2
+P^{(3)}_{s,t}(n)\cdot 4^n\binom {2n}n
+P^{(4)}_{s,t}(n)\cdot 16^n,
\end{multline}
where the $P^{(i)}_{s,t}(n)$, $i=1,2,3,4$, are polynomials in $n$,
$S=\fl{s/2}$ and $T=\fl{t/2}$.
More specifically,

\begin{enumerate} 
\item if $s$ and $t$ are even, then, as polynomials in $n$,
$P^{(1)}_{s,t}(n)$ is of degree at most $3S+3T$,
$P^{(2)}_{s,t}(n)$ is of degree at most 
$2S+2T+\fl{(S+T)/2}$,
$P^{(3)}_{s,t}(n)$ is identically zero if $s\ne0$,
$P^{(3)}_{0,t}(n)$ is of degree at most $2T$,
and $P^{(4)}_{s,t}(n)$ is of degree at most $2S+2T$;
\item if $s$ is odd and $t$ is even, then, as polynomials in $n$,
$P^{(1)}_{s,t}(n)$ is of degree at most $3S+3T+1$,
$P^{(2)}_{s,t}(n)$ is of degree at most 
$2S+2T+1+\fl{(S+T)/2}$,
$P^{(3)}_{s,t}(n)$ is of degree at most $2S+2T+1$,
and $P^{(4)}_{s,t}(n)$ is identically zero;
\item if $s$ is even and $t$ is odd, then, as polynomials in $n$,
$P^{(1)}_{s,t}(n)$ is of degree at most $3S+3T+1$,
$P^{(2)}_{s,t}(n)$ is of degree at most 
$2S+2T+1+\fl{(S+T)/2}$,
and $P^{(3)}_{s,t}(n)$ 
and $P^{(4)}_{s,t}(n)$ are identically zero;
\item if $s$ and $t$ are odd, then, as polynomials in $n$,
$P^{(1)}_{s,t}(n)$ is of degree at most $3S+3T+2$,
$P^{(2)}_{s,t}(n)$ is of degree at most 
$2S+2T+2+\fl{(S+T)/2}$,
and $P^{(3)}_{s,t}(n)$ 
and $P^{(4)}_{s,t}(n)$ are identically zero.
\end{enumerate}
\end{proposition}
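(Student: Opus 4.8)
The plan is to reduce the double sum to the four fundamental sums of Lemmas~\ref{lem:10}--\ref{lem:22} by a \emph{power-lowering} step, and then to collect the result into the four stated functions while tracking degrees and parities. First I would record the basic power-lowering identity. Setting $p_a(i)=\prod_{b=0}^{a-1}(n-i-b)$, one checks directly from the factorials that $p_a(i)\binom{2n}{n+i}=(2n)(2n-1)\cdots(2n-a+1)\binom{2n-a}{n+i}$. Since the polynomials $p_0(i),\dots,p_s(i)$ form a triangular basis (in $i$) of the polynomials of degree at most $s$, with coefficients that are polynomials in $n$, I can expand $i^s=\sum_{a=0}^s\gamma^{(s)}_a(n)\,p_a(i)$ where $\deg_n\gamma^{(s)}_a=s-a$, and hence $i^s\binom{2n}{n+i}=\sum_{a=0}^s\beta^{(s)}_a(n)\binom{2n-a}{n+i}$ with $\beta^{(s)}_a(n)=\gamma^{(s)}_a(n)\,(2n)(2n-1)\cdots(2n-a+1)$ a polynomial in $n$ of degree at most $s$. (For $s=1$ this is the relation $i\binom{2n}{n+i}=n\binom{2n}{n+i}-2n\binom{2n-1}{n+i}$ already used in the proof of Lemma~\ref{lem:20}.) The same is done for $j^t\binom{2n}{n+j}$.

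Substituting these expansions, the sum of interest becomes $\sum_{a=0}^s\sum_{b=0}^t\beta^{(s)}_a(n)\beta^{(t)}_b(n)\,G_{a,b}(n)$, where $G_{a,b}(n)=\sum_{0\le i\le j}\binom{2n-a}{n+i}\binom{2n-b}{n+j}$. To evaluate $G_{a,b}$, I write $a=2c+\epsilon$ and $b=2d+\delta$ with $\epsilon,\delta\in\{0,1\}$, and $N=n-c$, $M=n-d$, so that $\binom{2n-a}{n+i}=\binom{2N-\epsilon}{N+(i+c)}$ and similarly for the second factor. After the index shift $i\mapsto i+c$, $j\mapsto j+d$, the region $\{0\le i\le j\}$ becomes a staircase that differs from $\{0\le i\le j\}$ by only finitely many rows and diagonals (because $c\le S$ and $d\le T$ are bounded). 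The main staircase part is precisely one of the four fundamental sums with parameters $N,M$: Lemma~\ref{lem:10} when $\epsilon=\delta=0$, and the sums $\sum_{0\le i\le j}\binom{2n-1}{n+i}\binom{2m}{m+j}$, $\sum_{0\le i\le j}\binom{2n}{n+i}\binom{2m-1}{m+j}$, $\sum_{0\le i\le j}\binom{2n-1}{n+i}\binom{2m-1}{m+j}$ evaluated inside the proofs of Lemmas~\ref{lem:20}--\ref{lem:22} for the remaining parities. The finitely many boundary corrections are single binomial sums (tails) and Chu--Vandermonde sums of the type~\eqref{eq:Chu1}; these are exactly what the auxiliary Lemmas~\ref{lem:5} and~\ref{lem:4} are set up to supply.

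Next I would collect everything into the four target functions. Each ingredient produced above is a rational-in-$n$ multiple of one of them: the shifted powers $2^{2N-2p}$ and $2^{4n-2p}$ are constant multiples of $4^n$ and $16^n$; the shifted central binomials $\binom{2N}{N}=\binom{2n-2c}{n-c}$ and $\binom{2N+2M}{N+M}=\binom{4n-2c-2d}{2n-c-d}$ are rational multiples of $\binom{2n}n$ and $\binom{4n}{2n}$, with denominators $(2n-1)(2n-3)\cdots$ and $(4n-1)(4n-3)\cdots$ respectively; and the diagonal sums $\sum_\ell\binom{2N-2\ell}{N-\ell}\binom{2M+2\ell}{M+\ell}$ reduce (with $m=n$ collapsing the range, after the shifts it has boundedly many terms) to $\binom{2n}n^2$-multiples. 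Multiplying by the polynomials $\beta^{(s)}_a\beta^{(t)}_b$, summing over $a,b$, and clearing all denominators against the stated products of odd linear factors yields the polynomials $P^{(i)}_{s,t}(n)$ and establishes the claimed form~\eqref{eq:aux}.

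Finally, the degree bounds and the vanishing statements require care, and I expect this to be the main obstacle. The degree bounds follow by combining the degrees of the $\beta$'s (at most $s$ and $t$) with the numerator degrees acquired in passing to shifted central binomials and with the degrees of the common denominators absorbed into each $P^{(i)}$; this is where the explicit bounds $3S+3T$, $2S+2T+\fl{(S+T)/2}$, etc., come from. The vanishing of $P^{(3)}$ and $P^{(4)}$ in the various parity cases I would read off from the reflection symmetry $i\mapsto -i$, $j\mapsto -j$: the factors $4^n$ and $16^n$ can arise only from those contributions in which a summation is completed to the full symmetric range $\sum_{i=-n}^{n}$ (respectively in the second index), on which an odd power $i^s$ or $j^t$ gives zero; this forces $P^{(4)}\equiv 0$ unless both $s$ and $t$ are even, and forces $P^{(3)}$ to vanish in the stated parities, the asymmetry between $s$ and $t$ being a direct reflection of the asymmetric constraint $i\le j$. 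The genuinely delicate part is verifying that across all parities of $a$ and $b$ the finitely many boundary corrections, together with the individually large contributions coming from $16^n$, $\binom{4n}{2n}$, $4^n\binom{2n}n$ and $\binom{2n}n^2$, combine into polynomials $P^{(i)}_{s,t}(n)$ of exactly the asserted degrees and with exactly the asserted vanishing.
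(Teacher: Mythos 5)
Your overall strategy --- expand $i^s$ and $j^t$ in a polynomial basis adapted to the binomial coefficients, reduce to the four fundamental sums of Lemmas~\ref{lem:10}--\ref{lem:22}, and then collect terms --- is the same as the paper's in spirit, but your choice of basis creates complications that the paper deliberately avoids, and it leaves the substantive part of the proposition unproved. The paper expands the \emph{even} part of the power in the products $\left(n^2-i^2\right)\left((n-1)^2-i^2\right)\cdots\left((n-a+1)^2-i^2\right)$ (see \eqref{eq:deci}, \eqref{eq:decj}). The point of this choice is that
$$
\left(n^2-i^2\right)\cdots\left((n-a+1)^2-i^2\right)\binom{2n}{n+i}
=(2n-2a+1)_{2a}\binom{2(n-a)}{(n-a)+i},
$$
so the binomial stays \emph{centered} in the same variable $i$: Lemma~\ref{lem:10} (resp.\ Lemmas~\ref{lem:20}--\ref{lem:22}, for the residual odd factors $i$ and/or $j$) applies verbatim with $n\to n-a$, $m\to n-b$, with no index shift, no change of summation region, and hence none of the ``boundary corrections'' your falling-factorial basis $p_a(i)=\prod_{b=0}^{a-1}(n-i-b)$ forces upon you. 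Your shifts $i\mapsto i+c$, $j\mapsto j+d$ deform the region $\{0\le i\le j\}$, and the resulting row and diagonal corrections are exactly the kind of extra terms whose contribution to the degrees of the $P^{(i)}_{s,t}$ and to the vanishing pattern you would then have to control --- and which the paper never has to confront. (Lemmas~\ref{lem:5} and~\ref{lem:4} are not used in the proof of Proposition~\ref{prop:0} at all; they enter only later, in Theorems~\ref{thm:2} and~\ref{thm:3}.)

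The genuine gap is that the degree bounds and the vanishing statements in items (1)--(4) --- which are the actual content of the proposition, since the mere existence of \emph{some} rational-in-$n$ combination of the four functions is comparatively routine --- are asserted but not established. You say yourself that this is ``the main obstacle'' and ``the genuinely delicate part,'' and your symmetry heuristic for the vanishing of $P^{(3)}$ and $P^{(4)}$ is not a proof inside your own framework (and, as stated, it does not explain why $P^{(3)}_{s,t}$ is \emph{nonzero} of degree $2S+2T+1$ when $s$ is odd and $t$ is even, nor why $P^{(3)}_{0,t}$ survives). In the paper these facts come from concrete mechanisms: the vanishing of the $4^n\binom{2n}{n}$-coefficient for even $s\ne0$ is the single identity $\sum_{a=0}^S c_{a,S}(n)\,(2n-2a+1)_{2a}\binom{2n-2a}{n-a}=0^{2S}\binom{2n}{n}$, i.e.\ the expansion \eqref{eq:deci} evaluated at $i=0$; the bound $3S+3T$ for $P^{(1)}_{s,t}$ requires exhibiting the common factor $(2n-a-b+1)_{a+b}$ in numerator and denominator of $(2n-2a+1)_{2a}(2n-2b+1)_{2b}\binom{4n-2a-2b}{2n-a-b}/\binom{4n}{2n}$; and the bound for $P^{(2)}_{s,t}$ needs the two-case analysis around \eqref{eq:2nn^2a}--\eqref{eq:2nn^2b} depending on whether $\ell\le\frac12(b-a)$, together with the convention \eqref{eq:SUM} for negative ranges. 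None of these cancellations is supplied by your argument, and they would be strictly harder to extract from your decomposition because of the extra boundary terms.
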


\begin{remark} \label{rem:expl}
As the proof below shows, explicit formulae for the polynomials
$P^{(i)}_{s,t}(n)$, $i=1,2,3,4$, can be given that involve the coefficients
$c_{a,S}(n)$ and $c_{b,T}(n)$ in \eqref{eq:deci} and \eqref{eq:decj}, 
for which an explicit formula exists
as well, see Lemma~\ref{lem:6}. Admittedly, these explicit formulae
are somewhat cumbersome, and therefore we refrain from presenting them
in full here.
\end{remark}

\begin{proof}[Proof of Proposition~\ref{prop:0}]
We start with the case in which both $s$ and $t$ are even.
With the notation of the proposition, we have $s=2S$ and $t=2T$.
We write
\begin{equation} \label{eq:deci} 
i^{2S}
=
\sum_{a=0} ^{S}c_{a,S}(n)\,
\left(n^2-i^2\right)\left((n-1)^2-i^2\right)\cdots\left((n-a+1)^2-i^2\right),
\end{equation}
where $c_{a,S}(n)$ is a polynomial in $n$ of degree $2S-2a$,
$a=0,1,\dots,S$,
and
\begin{equation} \label{eq:decj} 
j^{2T}
=
\sum_{b=0} ^{T}c_{b,T}(n)
\left(n^2-j^2\right)\left((n-1)^2-j^2\right)\cdots\left((n-b+1)^2-j^2\right),
\end{equation}
where $c_{b,T}(n)$ is a polynomial in $n$ of degree $2T-2b$, $b=0,1,\dots,T$.
It should be noted that $c_{S,S}(n)=(-1)^S$ and $c_{T,T}(m)=(-1)^T$.
For an explicit formula for the coefficients $c_{a,S}(n)$ see 
Lemma~\ref{lem:6}.

If we use the expansions \eqref{eq:deci} and \eqref{eq:decj} 
on the left-hand side of \eqref{eq:aux},
then we obtain the expression
\begin{align*} 
\sum_{a=0} ^{S}\sum_{b=0} ^{T}&c_{a,S}(n)\,c_{b,T}(n)
\Bigg(
(2n-2a+1)_{2a}\,(2n-2b+1)_{2b}
\sum_{0\le i\le j}
\binom {2n-2a}{n+i-a}
\binom {2n-2b}{n+j-b}
\Bigg)\\
&=
\sum_{a=0} ^{S}\sum_{b=0} ^{T}c_{a,S}(n)\,c_{b,T}(n)\Bigg(
(2n-2a+1)_{2a}\,(2n-2b+1)_{2b}\\
&\kern2cm
\cdot
\left(
2^{4n-2a-2b-3}
+\frac {1} {4}\binom {4n-2a-2b} {2n-a-b}
+\frac {1} {2}\binom {2n-2a}{n-a}\binom {2n-2b}{n-b}\right.\\
&\kern2.5cm
\left.
+2^{2n-2b-2}\binom {2n-2a}{n-a}
-
\frac {1} {8}
\sum_{\ell=0} ^{b-a}
\binom {2n-2a-2\ell}{n-a-\ell}
\binom {2n-2b+2\ell} {n-b+\ell}\right)
\Bigg),
\end{align*}
due to Lemma~\ref{lem:10} with $n$ replaced by $n-a$ and $m=n-b$.
This expression can be further simplified by noting that
\begin{equation} \label{eq:0^{2S}} 
\sum_{a=0} ^S c_{a,S}(n)
\,(2n-2a+1)_{2a}\binom {2n-2a}{n-a}=
0^{2S}\binom {2n}n,
\end{equation}
which is equivalent to the expansion \eqref{eq:deci} for $i=0$.
Thus, we obtain
\begin{multline*}
\frac {1} {2}0^{2S+2T}{\binom {2n}{n}}^2
+0^{2S}\binom {2n}{n}\sum_{b=0} ^T c_{b,T}(n)\,
2^{2n-2b-2}\,(2n-2b+1)_{2b}\\
+\sum_{a=0} ^{S}\sum_{b=0} ^{T}c_{a,S}(n)\,c_{b,T}(n)\Bigg(
(2n-2a+1)_{2a}\,(2n-2b+1)_{2b}
\kern5cm\\
\cdot
\left(
2^{4n-2a-2b-3}
+\frac {1} {4}\binom {4n-2a-2b} {2n-a-b}
-
\frac {1} {8}
\sum_{\ell=0} ^{b-a}
\binom {2n-2a-2\ell}{n-a-\ell}
\binom {2n-2b+2\ell} {n-b+\ell}\right)
\Bigg).
\end{multline*}

Taking into account the properties of $c_{a,S}(n)$ and $c_{b,T}(n)$,
from this expression it is clear that $P^{(4)}_{s,t}(n)$,
the coefficient of $2^{4n}=16^n$, has degree at most $2S+2T$ as a
polynomial in~$n$.
It is furthermore obvious that, due to the term $0^{2S}=0^s$,
the polynomial $P^{(3)}_{s,t}(n)$,
the coefficient of $2^{2n}\binom {2n}n=4^n\binom {2n}n$, 
vanishes for $s\ne0$, while its degree is at most $2T$ if $s=0$.

In order to verify the claim about $P^{(1)}_{s,t}(n)$, the coefficient
of $\binom {4n}{2n}$, we write
\begin{multline*}
c_{a,S}(n)\,c_{b,T}(n)\,(2n-2a+1)_{2a}\,(2n-2b+1)_{2b}
\binom {4n-2a-2b} {2n-a-b}\\
=
c_{a,S}(n)\,c_{b,T}(n)\,
\frac {(2n-2a+1)_{2a}\,(2n-2b+1)_{2b}\,(2n-a-b+1)_{a+b}^2} 
{(4n-2a-2b+1)_{2a+2b}}
\binom {4n}{2n}.
\end{multline*}
It is easy to see that $(2n-a-b+1)_{a+b}$ divides numerator and
denominator. After this division, the denominator becomes
$$
2^{a+b}(4n-1)(4n-3)\cdots(4n-2a-2b+1),
$$
that is, part of the denominator below $P^{(1)}(n)$ in \eqref{eq:aux}.
The terms which are missing are
$$
(4n-2a-2b-1)(4n-2a-2b-3)\cdots (4n-2S-2T+1).
$$
Thus, if we put everything on the denominator
$$
(4n-1)(4n-3)\cdots(4n-2S-2T+1),
$$
then we see that the numerator of the coefficient of $\binom
{4n}{2n}$ has degree at most
$$
(2S-2a)+(2T-2b)+2a+2b+2(a+b)+(S+T-a-b)-(a+b)
=3S+3T,
$$ 
as desired.

Finally, 
we turn our attention to $P^{(2)}_{s,t}(n)$, the coefficient of ${\binom
  {2n}n}^2$. We have
\begin{subequations}
\begin{align} \notag
&c_{a,S}(n)\,c_{b,T}(n)\,(2n-2a+1)_{2a}\,(2n-2b+1)_{2b}
\binom {2n-2a-2\ell}{n-a-\ell}
\binom {2n-2b+2\ell} {n-b+\ell}\\
\label{eq:2nn^2a}
&\qquad 
=
c_{a,S}(n)\,c_{b,T}(n)\,
\frac {(n-a-\ell+1)_{a+\ell}^2\,(n-b+\ell+1)_{b-\ell}^2\,(2n-2b+1)_{2\ell}} 
{(2n-2a-2\ell+1)_{2\ell}}
{\binom {2n}n}^2\\
&\qquad 
=
c_{a,S}(n)\,c_{b,T}(n)\,
\frac {(n-a-\ell+1)_{a+\ell}^2\,(n-b+\ell+1)_{b-\ell}^2\,
(2n-2b+1)_{2b-2a-2\ell}} 
{(2n-2b+2\ell+1)_{2b-2a-2\ell}}
{\binom {2n}n}^2.
\label{eq:2nn^2b}
\end{align}
\end{subequations}
Let us assume $a\le b$, in which case we need to
consider non-negative indices~$\ell$. (If $a>b$, then, according to
the convention \eqref{eq:SUM}, we have to consider negative~$\ell$.
Using the definition \eqref{eq:Poch} of the Pochhammer symbol for
negative indices, the arguments would be completely analogous.)
We make the further assumption that $\ell\le\frac {1} {2}(b-a)$
and use expression \eqref{eq:2nn^2a}. (If $\ell>\frac {1} {2}(b-a)$,
then analogous arguments work starting from expression
\eqref{eq:2nn^2b}.)

It is easy to see that $(n-a-\ell+1)_\ell$ divides numerator and
denominator (as polynomials in~$n$) of the prefactor in \eqref{eq:2nn^2a}. 
Second, the (remaining) factor $2^{2\ell}(n-a-\ell+\frac {1} {2})_{\ell}$ 
in the denominator and
the factor $(2n-2b+1)_{2\ell}$ in the numerator do not have common
factors for
$\ell\le \frac {1} {2}(b-a)$. 
The denominator is
a factor of the denominator below $P^{(2)}_{s,t}(n)$ in \eqref{eq:aux}.
If in \eqref{eq:2nn^2a} we extend denominator and numerator by
the ``missing" factor 
$$
(n-\fl{(S+T)/2}+\tfrac {1} {2})_{\fl{(T+S)/2}-\fl{b+a}/2}\,
(n-a+\tfrac {1} {2})_{a},
$$
then, due to the properties of $c_{a,S}(n)$ and $c_{b,T}(n)$, the
numerator polynomial is of degree at most
\begin{align*}
(2S-2a)+(2T-2b)&+2(a+\ell)+2(b-\ell)+2\ell-\ell+\fl{(T+S)/2}-\fl{(b+a)/2}+a\\
&=
2S+2T+\ell+\fl{(T+S)/2}-\fl{(b+a)/2}+a\\
&\le
2S+2T+\fl{(b-a)/2}+\fl{(T+S)/2}-\fl{(b+a)/2}+a\\
&\le
2S+2T+\fl{(S+T)/2},
\end{align*}
as desired.

\medskip
For the other cases, namely $(s,t)$ being (odd,\,even), (even,\,odd),
respectively (odd,\,odd), we proceed in the same way.
That is, we apply the expansions \eqref{eq:deci} and
\eqref{eq:decj} on the left-hand side of \eqref{eq:aux}.
Then, however, instead of Lemma~\ref{lem:10}, we apply 
Lemma~\ref{lem:20}, Lemma~\ref{lem:21}, and Lemma~\ref{lem:22},
respectively. The remaining arguments are completely analogous to
those from the case of $(s,t)$ being (even,even) (and, in fact, much simpler
since the right-hand sides of the identities in Lemmas~\ref{lem:20}--\ref{lem:22}
are simpler than the one in Lemma~\ref{lem:10}).
\end{proof}

\begin{proposition} \label{Prop:0}
Let $s,t$ and $n,m$ be non-negative integers. 

If $s$ and $t$ are not both even or both odd, then
\begin{multline} \label{Eq:aux} 
\sum_{0\le i\le j}
i^sj^t
\binom {2n}{n+i}
\binom {2m}{m+j}\\
=\frac {Q^{(1)}_{s,t}(n,m)}{(2n+2m-1)(2n+2m-3)\cdots(2n+2m-2S-2T+1)}
{\binom {2n+2m}{n+m}}
\\
\kern1cm
+
\frac {Q^{(2)}_{s,t}(n,m)}{(n+m)(n+m-1)(n+m-2)\cdots
(n+m-S-T)}{\binom {2n}n}\binom {2m}m\\
+Q^{(3)}_{s,t}(n,m)\cdot 4^m\binom {2n}n,
\end{multline}
where the $Q^{(i)}_{s,t}(n,m)$, $i=1,2,3$, are polynomials in $n$
and~$m$, $S=\fl{s/2}$ and $T=\fl{t/2}$.
More specifically,

\begin{enumerate} 
\item if $s$ is odd and $t$ is even, then, as polynomials in $n$ and~$m$,
$Q^{(1)}_{s,t}(n,m)$ is of degree at most $3S+3T+1$,
$Q^{(2)}_{s,t}(n,m)$ is of degree at most 
$3S+3T+2$, and
$Q^{(3)}_{s,t}(n,m)$ is of degree at most $2S+2T+1$;
\item if $s$ is even and $t$ is odd, then, as polynomials in $n$ and~$m$,
$Q^{(1)}_{s,t}(n,m)$ is of degree at most $3S+3T+1$,
$Q^{(2)}_{s,t}(n,m)$ is of degree at most 
$3S+3T+2$,
and $Q^{(3)}_{s,t}(n,m)$ is identically zero.
\end{enumerate}

If $s$ and $t$ are odd, then
\begin{multline} \label{Eq:aux2} 
\sum_{0\le i\le j}
i^sj^t
\binom {2n}{n+i}
\binom {2m}{m+j}\\
=\frac {Q^{(1)}_{s,t}(n,m)}{(2n+2m-1)(2n+2m-3)\cdots(2n+2m-2S-2T-1)}
{\binom {2n+2m}{n+m}}
\\
\kern1cm
+
\frac {Q^{(2)}_{s,t}(n,m)}{(n+m)(n+m-1)(n+m-2)\cdots
(n+m-S-T)}{\binom {2n}n}\binom {2m}m,
\end{multline}
where $S=\fl{s/2}$ and $T=\fl{t/2}$, and,
as polynomials in $n$ and~$m$,
$Q^{(1)}_{s,t}(n,m)$ 
and $Q^{(2)}_{s,t}(n,m)$ are of degree at most 
$3S+3T+3$.
\end{proposition}

The proof of this proposition is completely analogous to the proof of
Proposition~\ref{prop:0} and is therefore left to the reader.
Also here (cf.\ Remark~\ref{rem:expl}), explicit formulae for the polynomials
$Q^{(i)}_{s,t}(n,m)$, $i=1,2,3$, can be given that involve coefficients
$c_{a,S}(n)$ and $c_{b,T}(m)$ for which an explicit formula exists
(see Lemma~\ref{lem:6}). 

\section{Some more auxiliary results}
\label{sec:aux2}

In this section we derive some single sum evaluations that we shall
need in the proofs in Section~\ref{sec:Gl}.

\begin{lemma} \label{lem:5}
For all non-negative integers $n$ and $k$, we have
\begin{equation} \label{eq:aux8} 
\sum_{j=1} ^n
j^{2k}\binom {2n}{n+j}
=-
\frac {0^{2k}} {2}\binom {2n} {n}
+
4^n\,\sum_{b=0} ^k c_{b,k}(n)\,(2n-2b+1)_{2b}\,
2^{-2b-1},
\end{equation}
and
\begin{equation} \label{eq:aux9} 
\sum_{j=1} ^n
j^{2k+1}\binom {2n}{n+j}
=
\frac {1} {2}\binom {2n}{n}
\sum_{b=0} ^k c_{b,k}(n)\,(n-b)_{b+1}\,(n-b+1)_b\,,
\end{equation}
where the coefficients $c_{b,k}(n)$ are defined in \eqref{eq:decj}
{\em(}with explicit formula provided in Lem\-ma~\ref{lem:6}{\em)}.
\end{lemma}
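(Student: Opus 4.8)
The plan is to reduce both identities to elementary binomial sums, using two ingredients: the polynomial expansion \eqref{eq:decj} of $j^{2k}$ (with $T=k$) in the products $\prod_{r=0}^{b-1}\bigl((n-r)^2-j^2\bigr)$, and a telescoping identity that collapses each such product against $\binom{2n}{n+j}$.

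First I would record the elementary index-lowering identity
$$
(n^2-j^2)\binom{2n}{n+j}=2n(2n-1)\binom{2n-2}{n-1+j},
$$
valid for all integers $j$ under the convention that binomials with out-of-range lower entry vanish; it follows at once by writing both sides as ratios of factorials. Iterating it with $n$ replaced successively by $n-1,n-2,\dots$ gives
$$
\prod_{r=0}^{b-1}\bigl((n-r)^2-j^2\bigr)\binom{2n}{n+j}=(2n-2b+1)_{2b}\binom{2n-2b}{n-b+j},
$$
the accumulated prefactor $\prod_{r=0}^{b-1}2(n-r)(2n-2r-1)$ being precisely the Pochhammer symbol $(2n-2b+1)_{2b}$.

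Substituting \eqref{eq:decj} into the left-hand side of \eqref{eq:aux8}, and writing $j^{2k+1}=j\cdot j^{2k}$ for \eqref{eq:aux9}, the telescoping identity reduces everything to the two single sums $\sum_{j=1}^N\binom{2N}{N+j}$ and $\sum_{j=1}^N j\binom{2N}{N+j}$ with $N=n-b$ (the terms $j>N$ contribute nothing). Both are standard: the symmetry $\binom{2N}{N+j}=\binom{2N}{N-j}$ gives $\sum_{j=1}^N\binom{2N}{N+j}=\tfrac12\bigl(4^N-\binom{2N}{N}\bigr)$, and the identity $m\binom{2N}{m}=2N\binom{2N-1}{m-1}$ together with the same symmetry gives $\sum_{j=1}^N j\binom{2N}{N+j}=\tfrac{N}{2}\binom{2N}{N}$.

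Finally I would reassemble. In the even case the $4^{n-b}$ contributions combine into $4^n\sum_{b=0}^k c_{b,k}(n)(2n-2b+1)_{2b}2^{-2b-1}$, which is the second term on the right-hand side of \eqref{eq:aux8}. The $\binom{2N}{N}$ contributions collapse through $(2n-2b+1)_{2b}\binom{2n-2b}{n-b}=\binom{2n}{n}\bigl((n-b+1)_b\bigr)^2$ and the specialization of \eqref{eq:decj} at $j=0$, namely $\sum_{b=0}^k c_{b,k}(n)\bigl((n-b+1)_b\bigr)^2=0^{2k}$, yielding exactly the correction $-\tfrac{0^{2k}}{2}\binom{2n}{n}$. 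In the odd case the same collapse together with the elementary factorial identity $(n-b)\bigl((n-b+1)_b\bigr)^2=(n-b)_{b+1}(n-b+1)_b$ produces \eqref{eq:aux9}. I expect the only real obstacle to be the Pochhammer bookkeeping: checking that the telescoping prefactor is exactly $(2n-2b+1)_{2b}$, and recognizing the factorial identities that re-express the $\binom{2N}{N}$-terms through $\binom{2n}{n}$ and the coefficients $c_{b,k}(n)$. Once the $j=0$ specialization of \eqref{eq:decj} is invoked, the correction term $-\tfrac{0^{2k}}{2}\binom{2n}{n}$ and the Pochhammer form on the right-hand side of \eqref{eq:aux9} emerge automatically.
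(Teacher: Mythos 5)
Your proposal is correct and follows essentially the same route as the paper: expand $j^{2k}$ via \eqref{eq:decj}, absorb each product $\prod_{r=0}^{b-1}\bigl((n-r)^2-j^2\bigr)$ into the binomial coefficient to get $(2n-2b+1)_{2b}\binom{2n-2b}{n+j-b}$, evaluate the resulting single sums $\sum_{j\ge1}\binom{2N}{N+j}$ and $\sum_{j\ge1}j\binom{2N}{N+j}$, and invoke the $j=0$ specialization of \eqref{eq:decj} to produce the $-\tfrac{0^{2k}}{2}\binom{2n}{n}$ correction. All the Pochhammer and factorial identities you cite check out, so the argument is sound.
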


\begin{proof}
We use the expansion \eqref{eq:decj} with $T=k$ on the
left-hand side of \eqref{eq:aux8}. This gives
\begin{align*}
\sum_{j=1} ^n
j^{2k}\binom {2n}{n+j}
&=
\sum_{j=1} ^n
\sum_{b=0} ^k c_{b,k}(n)\,(2n-2b+1)_{2b}\binom {2n-2b} {n+j-b}\\
&=
\sum_{b=0} ^k c_{b,k}(n)\,(2n-2b+1)_{2b}
\left(
2^{2n-2b-1}-
\frac {1} {2}\binom {2n-2b} {n-b}
\right)\\
&=
-
\frac {0^{2k}} {2}\binom {2n} {n}
+
\sum_{b=0} ^k c_{b,k}(n)\,(2n-2b+1)_{2b}\,
2^{2n-2b-1},
\end{align*}
where we used \eqref{eq:decj} with $T=k$ and $j=0$ in the last line.
This is exactly the right-hand side of \eqref{eq:aux8}.

Now we do the same on the
left-hand side of \eqref{eq:aux9}. This leads to
\begin{align*}
\sum_{j=1} ^n
j^{2k+1}\binom {2n}{n+j}
&=
\sum_{j=1} ^n
j\cdot \sum_{b=0} ^k c_{b,k}(n)\,(2n-2b+1)_{2b}\binom {2n-2b} {n+j-b}\\
&=
\sum_{b=0} ^k c_{b,k}(n)\,(2n-2b)_{2b+1}
\sum_{j=1} ^n
\left(
\binom {2n-2b-1} {n+j-b-1}
-\frac {1} {2}\binom {2n-2b} {n+j-b}
\right)\\
&=
\sum_{b=0} ^k c_{b,k}(n)\,(2n-2b)_{2b+1}
\left(
2^{2n-2b-2}
-\frac {1} {2}2^{2n-2b-1}+\frac {1} {4}\binom {2n-2b}{n-b}
\right)\\
&=
\frac {1} {2}\sum_{b=0} ^k c_{b,k}(n)\,(n-b)_{b+1}\,(n-b+1)_b
\binom {2n}{n}.
\end{align*}
This is exactly the right-hand side of \eqref{eq:aux9}.
\end{proof}

\begin{lemma} \label{lem:4}
For all non-negative integers $n$ and $h,k$, we have
\begin{equation} \label{eq:aux7a} 
\sum_{j\ge1} 
j^{2k}
{\binom {2n}{n+j}}{\binom {2m}{m+j}}
=
-\frac {0^{2k}} {2}{\binom {2n}n}{\binom {2m}m}
+\frac {1} {2}
\sum_{b=0} ^k c_{b,k}(n)\,(2n-2b+1)_{2b}
\binom {2n+2m-2b}{n+m-b}
\end{equation}
and
\begin{multline} \label{eq:aux7b} 
\sum_{j\ge1}
j^{2h+2k+1}
{\binom {2n}{n+j}}{\binom {2m}{m+j}}\\
=
\sum_{a=0} ^h\sum_{b=0} ^k 
c_{a,h}(n)\,c_{b,k}(m)\,(n-a+1)_a^2\,(m-b+1)_b^2\,
\frac {(n-a)(m-b)} {2(n+m-a-b)}{\binom {2n} {n}}{\binom {2m} {m}},
\end{multline}
where the coefficients $c_{a,h}(n)$ and 
$c_{b,k}(m)$ are defined in \eqref{eq:deci}
{\em(}with explicit formula provided in Lemma~\ref{lem:6}{\em)}.
\end{lemma}

\begin{proof}
We start by using the expansion \eqref{eq:decj} with $T=k$ on the
left-hand side of \eqref{eq:aux7a}. This gives
\begin{equation}
\sum_{j\ge1} 
j^{2k}{\binom {2n}{n+j}}{\binom {2m}{m+j}}
=
\sum_{j\ge1} 
\sum_{b=0} ^k c_{b,k}(n)\,(2n-2b+1)_{2b}\binom {2n-2b} {n+j-b}
\binom {2m}{m+j}.
\label{eq:aux7c}
\end{equation}
We have
$$
\sum_{j\ge1}
\binom {2n-2b}{n+j-b}
\binom {2m}{m+j}
=
\sum_{j\le-1}
\binom {2n-2b}{n+j-b}
\binom {2m}{m+j}
$$
and hence
\begin{align*}
\sum_{j\ge1}
\binom {2n-2b}{n+j-b}
\binom {2m}{m+j}
&=-\frac {1} {2}
\binom {2n-2b}{n-b}
\binom {2m}{m}
+\frac {1} {2}
\sum_{j} 
\binom {2n-2b}{n+j-b}
\binom {2m}{m+j}\\
&=-\frac {1} {2}
\binom {2n-2b}{n-b}
\binom {2m}{m}
+\frac {1} {2}
\binom {2n+2m-2b}{n+m-b},
\end{align*}
due to the Chu--Vandermonde summation.
We substitute this back into \eqref{eq:aux7c} and obtain
\begin{align*}
\sum_{j\ge1}
j^{2k}{\binom {2n}{n+j}}{\binom {2m}{m+j}}
&=
\sum_{b=0} ^k c_{b,k}(n)\,(2n-2b+1)_{2b}\\
&\kern3cm
\cdot
\left(
-\frac {1} {2}
\binom {2n-2b}{n-b}
\binom {2m}{m}
+\frac {1} {2}
\binom {2n+2m-2b}{n+m-b}
\right)\\
&\kern-1cm
=
-\frac {0^{2k}} {2}{\binom {2n}n}{\binom {2m}m}
+\frac {1} {2}
\sum_{b=0} ^k c_{b,k}(n)\,(2n-2b+1)_{2b}
\binom {2n+2m-2b}{n+m-b},
\end{align*}
where we used \eqref{eq:decj} with $T=k$ and $j=0$ in the last line.

In order to establish \eqref{eq:aux7b}, 
we write $j^{2h+2k+1}=j\cdot j^{2h}\cdot j^{2k}$ and use \eqref{eq:decj}
with $T=h$ and with $T=k$. This leads to
\begin{multline} \label{eq:aux7d} 
\sum_{j=1} ^n
j^{2h+2k+1}\,
{\binom {2n}{n+j}}{\binom {2m}{m+j}}\\
=
\sum_{j\ge1}
j\cdot
\sum_{a=0} ^h\sum_{b=0} ^k c_{a,h}(n)\,c_{b,k}(m)\,
(2n-2a+1)_{2a}\,(2m-2b+1)_{2b}
\binom {2n-2a} {n+j-a}
\binom {2m-2b}{m+j-b}.
\end{multline}
Using the standard hypergeometric notation
$${}_p F_q\!\left[\begin{matrix} a_1,\dots,a_p\\ b_1,\dots,b_q\end{matrix}; 
z\right]=\sum _{m=0} ^{\infty}\frac {\po{a_1}{m}\cdots\po{a_p}{m}}
{m!\,\po{b_1}{m}\cdots\po{b_q}{m}} z^m\ ,
$$
where the Pochhammer symbol $(\alpha)_m$ is defined in
\eqref{eq:Poch}, we have
\begin{multline*}
\sum_{j\ge1}
j\,\binom {2n-2a} {n+j-a}
\binom {2m-2b}{m+j-b}\\
=\binom {2n-2a} {n-a+1}
\binom {2m-2b}{m-b+1}\,
{} _{3} F _{2} \!\left [ \begin{matrix} 
2,-n+a+1,-m+b+1\\
n-a+2,m-b+2
\end{matrix} ; {\displaystyle
       1}\right ].
\end{multline*}
This $_3F_2$-series can be evaluated by means of 
(the terminating
version of) Dixon's summation (see \cite[Appendix~(III.9)]{SlatAC})
$$
{} _{3} F _{2} \!\left [ \begin{matrix} 
{A,B,-N}\\{1+A-B,1+A+N}\end{matrix};1\right]=
\frac{(1+A)_N\,(1+\frac{A}{2}-B)_N}{(1+\frac{A}{2})_N\,(1+A-B)_N},
$$
where $N$ is a non-negative integer.
Indeed, if we choose $A=2$, $B=-n+a+1$, and $N=m-b-1$ in this summation
formula, then we obtain
$$
\sum_{j=1} ^n
j\,\binom {2n-2a} {n+j-a}
\binom {2m-2b}{m+j-b}
=\frac {(n-a+1)(m-b+1)} {2(n+m-a-b)}
\binom {2n-2a} {n-a+1}
\binom {2m-2b}{m-b+1}.
$$
If this is substituted back in \eqref{eq:aux7d}, then we obtain
the right-hand side of \eqref{eq:aux7b} after little manipulation.
\end{proof}

For the proof of our theorems it is not necessary to have an explicit formula
for the coefficients $c_{a,S}(n)$ in the expansion \eqref{eq:deci}
--- the coefficients 
that appeared in the proof of Proposition~\ref{prop:0}, and
in Lemmas~\ref{lem:5} and \ref{lem:4} --- at
our disposal. However, it is still of intrinsic interest to provide
such an explicit formula.

\begin{lemma} \label{lem:6}
The coefficient $c_{a,S}(n)$ in the expansion \eqref{eq:deci} is given by
\begin{equation} \label{eq:aux10} 
c_{a,S}(n)=\sum_{r=0}^a \frac {2(-1)^{a+r}(n-r)^{2S+1}} 
            {r!\,(a-r)!\,(2n-a-r)_{a+1}}.
\end{equation}
\end{lemma}

\begin{proof}
Substituting $n-b$ for $i$ in \eqref{eq:deci}, $b=0,1,\dots,S$, we
obtain the triangular system of linear equations for the $c_{a,S}$'s
\begin{equation} \label{eq:2Sc} 
(n-b)^{2S}
=
\sum_{a=0} ^{b}c_{a,S}(n)\,(2n-a-b+1)_a\,(b-a+1)_a,
\quad b=0,1,\dots,S.
\end{equation}
Now, using the inversion formula of Gould and Hsu \cite{GOHsAA}
(in the statement\break \cite[Eq.~(1.1)]{KratAN} of the formula 
put $n=b$, $k=a$,
$l=r$, $a_j=2n-j$, $b_j=-1$, in this order), we see that the
matrix
$$
\big((2n-a-b+1)_a\,(b-a+1)_a\big)_{b,a\ge0}
$$
is inverse to the matrix
$$
\left(\frac {(-1)^{a+r}(2n-2r)} 
            {r!\,(a-r)!\,(2n-a-r)_{a+1}}\right)_{a,r\ge0}.
$$
Hence, if the system \eqref{eq:2Sc} is inverted, that is, the
coefficients $c_{a,S}(n)$, $a=0,1,\dots,S$, 
are expressed in terms of the $(n-b)^{2S}$, $b=0,1,\dots,S$,
then the connection coefficients are given by the latter matrix.
This proves \eqref{eq:aux10}.
\end{proof}

\section{Summation formulae for binomial double sums involving
  absolute values}
\label{sec:Gl}

In this section we present the implications of
Propositions~\ref{prop:0} and \ref{Prop:0} on sums of the form~\eqref{eq:sumallg}
and \eqref{eq:sumallg2}
with $\beta=1$. As we point out in Remark~\ref{rem:1}(1) below,
it would also be possible to derive similar theorems for
arbitrary~$\beta$.
(An example of an evaluation with $\beta=3$ is given in \eqref{eq:beta=3}.)

We start with results for double sums of the form
\eqref{eq:sumallg2} with even~$k$ (and $\beta=1$). First, we also let $m=n$.
The corresponding evaluations are given in Theorem~\ref{thm:2}
below. In Theorem~\ref{MNthm:2} we address these same double
sums for generic $n$ and~$m$. Similarly to Proposition~\ref{Prop:0},
for that case we
have results only if $s$ and $t$ are not both even.

\begin{theorem} \label{thm:2}
Let $s,t,k$ and $n$ be  non-negative integers.

If $s$ and $t$ are even, then
\begin{multline} \label{eq:2k1} 
\sum_{-n\le i,j\le n}\left\vert i^sj^t(j^{2k}-i^{2k})\right\vert
\binom {2n}{n+i}
\binom {2n}{n+j}\\
=\frac {U^{(2)}_{s,t,k}(n)}{(2n-1)(2n-3)\cdots(2n-2\fl{(S+T+k)/2}+1)}
{\binom {2n}n}^2,
\end{multline}
where $U^{(2)}_{s,t,k}(n)$ is of degree at most 
$2S+2T+2k+\fl{(S+T+k)/2}$.

If $s$ and $t$ are both odd, then
\begin{multline} \label{eq:2k2} 
\sum_{-n\le i,j\le n}\left\vert i^sj^t(j^{2k}-i^{2k})\right\vert
\binom {2n}{n+i}
\binom {2n}{n+j}\\
=\frac {U^{(2)}_{s,t,k}(n)}{(2n-1)(2n-3)\cdots(2n-2\cl{(S+T+k)/2}+1)}
{\binom {2n}n}^2,
\end{multline}
where $U^{(2)}_{s,t,k}(n)$ is of degree at most 
$2S+2T+2k+\cl{(S+T+k)/2}$.

If $s$ and $t$ have different parity, then
\begin{multline} \label{eq:2k3} 
\sum_{-n\le i,j\le n}\left\vert i^sj^t(j^{2k}-i^{2k})\right\vert
\binom {2n}{n+i}
\binom {2n}{n+j}\\
=\frac {U^{(1)}_{s,t,k}(n)}{(4n-1)(4n-3)\cdots(4n-2S-2T-2k+1)}
{\binom {4n}{2n}}
+U^{(3)}_{s,t,k}(n)\cdot 4^n\binom {2n}n,
\end{multline}
where $U^{(1)}_{s,t,k}(n)$ and $U^{(3)}_{s,t,k}(n)$ are polynomials in $n$,
$S=\fl{s/2}$ and $T=\fl{t/2}$.

More specifically,

\begin{enumerate} 
\item if $s$ is odd and $t$ is even, then, as polynomials in $n$,
$U^{(1)}_{s,t,k}(n)$ is of degree at most 
$3S+3T+3k+1$, and
$U^{(3)}_{s,t,k}(n)$ is of degree at most $2S+2T+2k+1$;
\item if $s$ is even and $t$ is odd, then, as polynomials in $n$,
$U^{(1)}_{s,t,k}(n)$ is of degree at most 
$3S+3T+3k+1$, and
$U^{(3)}_{s,t,k}(n)$ is of degree at most $2S+2T+2k+1$.
\end{enumerate}
\end{theorem}

\begin{remarkno}
As the proof below shows, also here (cf.\ Remark~\ref{rem:expl}) 
explicit formulae for the polynomials
$U^{(i)}_{s,t,k}(n)$, $i=1,2,3$, can be given that involve coefficients
$c_{a,A}(n)$, for various specific choices of $A$.
As we pointed out at several places already, Lemma~\ref{lem:6} provides 
an explicit formula for these coefficients.
\end{remarkno}

\begin{proof}[Proof of Theorem~\ref{thm:2}]
The claim is trivially true for $k=0$. Therefore we may assume
from now on that $k>0$.

Using the operations 
$(i,j)\to(-i,j)$,
$(i,j)\to(i,-j)$,
and $(i,j)\to(j,i)$,
which do not change the summand, we see that
\begin{align} \notag
\sum_{-n\le i,j\le n}
&\left\vert i^sj^t(j^{2k}-i^{2k})\right\vert
\binom {2n}{n+i}
\binom {2n}{n+j}\\
\notag
&=4\sum_{0\le i\le j\le n}
\alpha(i=0)\,\alpha(j=0)\,
\left(i^sj^t+i^tj^s\right)\left( j^{2k}-i^{2k}\right)
\binom {2n}{n+i}
\binom {2n}{n+j}\\
\notag
&=4\sum_{0\le i\le j\le n}
\left(i^sj^t+i^tj^s\right)\left( j^{2k}-i^{2k}\right)
\binom {2n}{n+i}
\binom {2n}{n+j}\\
&\kern2cm
-2\binom {2n}{n}
\sum_{j=1}^n
\left(0^sj^t+0^tj^s\right)j^{2k}
\binom {2n}{n+j},
\label{eq:7} 
\end{align}
where $\alpha(\mathcal A)=\frac {1} {2}$ if $\mathcal A$ is true 
and $\alpha(\mathcal A)=1$ otherwise.
Now one splits the sums into several sums of the form 
$$
\sum_{0\le i\le j\le n}
i^Aj^B
\binom {2n}{n+i}
\binom {2n}{n+j},
\quad \text{respectively}\quad 
\sum_{j=1}^n
j^{B}
\binom {2n}{n+j}.
$$
To sums of the second form, we apply Lemma~\ref{lem:5}.
In order to evaluate the sums of the first form, we proceed as in
the proof of Proposition~\ref{prop:0}. 
That is, we apply the expansions \eqref{eq:deci} and
\eqref{eq:decj},
and subsequently we use Lemmas~\ref{lem:10}--\ref{lem:22} to
evaluate the sums over $i$ and $j$. Inspection of the result
makes all assertions of the theorem obvious, except for the
implicit claims in \eqref{eq:2k1} and \eqref{eq:2k2} that
the term $4^n\binom {2n}n$ does not appear.

In order to verify these claims, we have to figure out
what the coefficients of $4^n\binom {2n}n$ of the various sums in
\eqref{eq:7} are precisely. For the case of even $s$ and $t$,
from Lemma~\ref{lem:10} we obtain that the coefficient of
$4^n\binom {2n}n$ in the expression \eqref{eq:7} equals
{\allowdisplaybreaks
\begin{multline*} 
4\sum_{a=0} ^{S}
\sum_{b=0} ^{T+k}
c_{a,S}(n)\,c_{b,T+k}(n) \,(2n-2a+1)_{2a}\,(2n-2b+1)_{2b}\,
2^{-2b-2}\binom {2n-2a}{n-a}{\binom {2n}n}^{-1}\\
+
4\sum_{a=0} ^{T}
\sum_{b=0} ^{S+k}
c_{a,T}(n)\,c_{b,S+k}(n) \,(2n-2a+1)_{2a}\,(2n-2b+1)_{2b}\,
2^{-2b-2}\binom {2n-2a}{n-a}{\binom {2n}n}^{-1}\\
-4\sum_{a=0} ^{S+k}
\sum_{b=0} ^{T}
c_{a,S+k}(n)\,c_{b,T}(n) \,(2n-2a+1)_{2a}\,(2n-2b+1)_{2b}\,
2^{-2b-2}\binom {2n-2a}{n-a}{\binom {2n}n}^{-1}\\
-4\sum_{a=0} ^{T+k}
\sum_{b=0} ^{S}
c_{a,T+k}(n)\,c_{b,S}(n) \,(2n-2a+1)_{2a}\,(2n-2b+1)_{2b}\,
2^{-2b-2}\binom {2n-2a}{n-a}{\binom {2n}n}^{-1}\\
-2\cdot0^{2S}
\sum_{b=0} ^{T+k}c_{b,T+k}(n) \,(2n-2b+1)_{2b}\,
2^{-2b-1}\\
-2\cdot0^{2T}
\sum_{b=0} ^{S+k}c_{b,S+k}(n) \,(2n-2b+1)_{2b}\,
2^{-2b-1}.
\end{multline*}}%
We may use \eqref{eq:0^{2S}} to simplify the double sums.
In this manner, we arrive at the expression
\begin{multline*} 
0^{2S}
\sum_{b=0} ^{T+k}
c_{b,T+k}(n) \,(2n-2b+1)_{2b}\,
2^{-2b}
+
0^{2T}
\sum_{b=0} ^{S+k}
c_{b,S+k}(n) \,(2n-2b+1)_{2b}\,
2^{-2b}
\kern1cm\\
\qquad -
0^{2S+2k}
\sum_{b=0} ^{T}
c_{b,T}(n) \,(2n-2b+1)_{2b}\,
2^{-2b}
-
0^{2T+2k}
\sum_{b=0} ^{S}
c_{b,S}(n) \,(2n-2b+1)_{2b}\,
2^{-2b}\\
\qquad 
-0^{2S}
\sum_{b=0} ^{T+k}c_{b,T+k}(n) \,(2n-2b+1)_{2b}\,
2^{-2b}
-0^{2T}
\sum_{b=0} ^{S+k}c_{b,S+k}(n) \,(2n-2b+1)_{2b}\,
2^{-2b},
\end{multline*}
which visibly vanishes due to our assumption that $k>0$.

The proof for the analogous claim in the case of odd $s$ and $t$ 
proceeds along the
same lines. The only difference is that, instead of
Lemma~\ref{lem:10}, here we need Lemma~\ref{lem:22}, and instead
of \eqref{eq:aux8} we need \eqref{eq:aux9}.
\end{proof}

\begin{theorem} \label{MNthm:2}
Let $s,t,k$ and $n,m$ be non-negative integers.
If $s$ and $t$ are not both even, then
\begin{multline} \label{MNeq:2k1} 
\sum_{i,j}\left\vert i^sj^t(j^{2k}-i^{2k})\right\vert
\binom {2n}{n+i}
\binom {2m}{m+j}\\
=\frac {V^{(1)}_{s,t,k}(n,m)}{(2n+2m-1)(2n+2m-3)\cdots(2n+2m-2S-2T-2k+1)}
{\binom {2n+2m}{n+m}}
\\
\kern1cm
+
\frac {V^{(2)}_{s,t,k}(n,m)}{(n+m-1)(n+m-2)\cdots
(n+m-S-T-k)}{\binom {2n}n}\binom {2m}m\\
+V^{(3)}_{s,t,k}(n,m)\cdot 4^m\binom {2n}n
+V^{(4)}_{s,t,k}(n,m)\cdot 4^n\binom {2m}m,
\end{multline}
where the $V^{(i)}_{s,t,k}(n,m)$, $i=1,2,3,4$, are polynomials in $n$ and~$m$,
$S=\fl{s/2}$ and $T=\fl{t/2}$.

More specifically,

\begin{enumerate} 
\item if $s$ is odd and $t$ is even, then, as polynomials in $n$ and~$m$,
$V^{(1)}_{s,t,k}(n,m)$ is of degree at most 
$3S+3T+3k+1$,
$V^{(3)}_{s,t,k}(n,m)$ is of degree at most $2S+2T+2k+1$,
and $V^{(2)}_{s,t,k}(n,m)$ and $V^{(4)}_{s,t,k}(n,m)$ are identically zero,
\item if $s$ is even and $t$ is odd, then, as polynomials in $n$ and~$m$,
$V^{(1)}_{s,t,k}(n,m)$ is of degree at most 
$3S+3T+3k+1$,
$V^{(4)}_{s,t,k}(n,m)$ is of degree at most $2S+2T+2k+1$,
and $V^{(2)}_{s,t,k}(n,m)$ and $V^{(3)}_{s,t,k}(n,m)$ are identically zero,
\item if $s$ and $t$ are odd, then, as polynomials in $n$ and~$m$,
$V^{(2)}_{s,t,k}(n,m)$ is of degree at most 
$3S+3T+3k+2$,
and $V^{(1)}_{s,t,k}(n,m)$, $V^{(3)}_{s,t,k}(n,m)$, and
$V^{(4)}_{s,t,k}(n,m)$ are identically zero.
\end{enumerate}
\end{theorem}

\begin{remarkno}
Again (cf.\ Remark~\ref{rem:expl}), from the proof below it is obvious that 
explicit formulae for the polynomials
$V^{(i)}_{s,t,k}(n,m)$, $i=1,2,3,4$, are available in terms of coefficients
$c_{a,A}(n)$ and $c_{b,B}(m)$, for various specific choices of $A$ and~$B$,
with Lemma~\ref{lem:6} providing an explicit formula for these coefficients.
\end{remarkno}

\begin{proof}[Proof of Theorem~\ref{MNthm:2}]
Again, the claim is trivially true for $k=0$. Therefore we may assume
from now on that $k>0$.

We follow the same idea as in the proof of Theorem~\ref{thm:2},
that is, we observe that the operations
$(i,j)\to(-i,j)$ and $(i,j)\to(i,-j)$ leave the summand invariant.
However, a notable difference here is that the interchange of
summation indices $(i,j)\to(j,i)$ does not leave the summand
invariant. Consequently, here we see that
{\allowdisplaybreaks
\begin{align} \notag
\sum_{i,j}
&\left\vert i^sj^t(j^{2k}-i^{2k})\right\vert
\binom {2n}{n+i}
\binom {2m}{m+j}\\
\notag
&=4\sum_{0\le i\le j}
\alpha(i=0)\,\alpha(j=0)\,
i^sj^t\left( j^{2k}-i^{2k}\right)
\binom {2n}{n+i}
\binom {2m}{m+j}\\
\notag
&\kern2cm
+4\sum_{0\le i\le j}
\alpha(i=0)\,\alpha(j=0)\,
i^tj^s\left( j^{2k}-i^{2k}\right)
\binom {2n}{n+j}
\binom {2m}{m+i}\\
\notag
&=4\sum_{0\le i\le j}
i^sj^t\left( j^{2k}-i^{2k}\right)
\binom {2n}{n+i}
\binom {2m}{m+j}\\
\notag
&\kern2cm
+4\sum_{0\le i\le j}
i^tj^s\left( j^{2k}-i^{2k}\right)
\binom {2n}{n+j}
\binom {2m}{m+i}\\
&\kern2cm
-2\binom {2n}{n}
\sum_{j=1}^m
0^sj^{t+2k}
\binom {2m}{m+j}
-2\binom {2m}{m}
\sum_{j=1}^n
0^tj^{s+2k}
\binom {2n}{n+j},
\label{eq:7B} 
\end{align}}%
where $\alpha(\mathcal A)$ has the same meaning as in the proof of
Theorem~\ref{thm:2}.
Now one splits the sums into several sums of the form 
\begin{multline*}
\sum_{0\le i\le j}
i^Aj^B
\binom {2n}{n+i}
\binom {2m}{m+j}
\quad \text{and}\quad 
\sum_{0\le i\le j}
i^Aj^B
\binom {2m}{m+i}
\binom {2n}{n+j},
\\
\quad \text{respectively}\quad 
\sum_{j=1}^n
j^{B}
\binom {2n}{n+j}
\quad \text{and}\quad 
\sum_{j=1}^m
j^{B}
\binom {2m}{m+j}.
\end{multline*}
To sums of the second form, we apply Lemma~\ref{lem:5}.
In order to evaluate the sums of the first form, we proceed as in
the proof of Proposition~\ref{prop:0}. 
That is, we apply the expansions \eqref{eq:deci} and
\eqref{eq:decj} (with $n$ replaced by $m$ if appropriate),
and subsequently we use Lemmas~\ref{lem:20}--\ref{lem:22} to
evaluate the sums over $i$ and $j$. Inspection of the result
makes all assertions of the theorem obvious, except for the
claims in Items~(1) and (2) that the polynomial $V^{(2)}_{s,t,k}(n,m)$,
the coefficient of $\binom {2n}n\binom {2m}m$ in \eqref{MNeq:2k1},
vanishes. 

Below we treat Item~(1), that is, the case where $s$ is odd and $t$ is
even. Item~(2) can be handled completely analogously.

After having done the above described manipulations,
we see that, for odd $s$ and even~$t$, the coefficient of
$\binom {2n}n\binom {2m}m$ in the expression \eqref{eq:7B} equals
{\allowdisplaybreaks
\begin{align*}
&\sum_{a=0}^S\sum_{b=0}^{T+k}
c_{a,S}(n)\,c_{b,T+k}(m)\, (2n-2a+1)_{2a}\,(2m-2b+1)_{2b}
\frac {(n-a)(m-b)} {n+m-a-b}\\
&\kern7cm
\times
\binom {2n-2a}{n-a}
\binom {2m-2b}{m-b}
{\binom {2n}{n}}^{-1}
{\binom {2m}{m}}^{-1}
\\
&\kern0cm
-\sum_{a=0}^{S+k}\sum_{b=0}^{T}
c_{a,S+k}(n)\,c_{b,T}(m)\, (2n-2a+1)_{2a}\,(2m-2b+1)_{2b}
\frac {(n-a)(m-b)} {n+m-a-b}
\\
&\kern7cm
\times
\binom {2n-2a}{n-a}
\binom {2m-2b}{m-b}
{\binom {2n}{n}}^{-1}
{\binom {2m}{m}}^{-1}
\\
&\kern0cm
+\sum_{a=0}^T\sum_{b=0}^{S+k}
c_{a,T}(m)\,c_{b,S+k}(n)\, (2m-2a+1)_{2a}\,(2n-2b+1)_{2b}
\frac {(n-b)(n-b+2(m-a))} {n+m-a-b}
\\
&\kern7cm
\times
\binom {2n-2b}{n-b}
\binom {2m-2a}{m-a}
{\binom {2n}{n}}^{-1}
{\binom {2m}{m}}^{-1}
\\
&\kern0cm
-\sum_{a=0}^{T+k}\sum_{b=0}^{S}
c_{a,T+k}(m)\,c_{b,S}(n)\, (2m-2a+1)_{2a}\,(2n-2b+1)_{2b}
\frac {(n-b)(n-b+2(m-a))} {n+m-a-b}
\\
&\kern7cm
\times
\binom {2n-2b}{n-b}
\binom {2m-2a}{m-a}
{\binom {2n}{n}}^{-1}
{\binom {2m}{m}}^{-1}
\\
&\kern0cm
-0^t
\sum_{b=0} ^{S+k} c_{b,S+k}(n)\,(n-b)_{b+1}\,(n-b+1)_b.
\end{align*}}%
In the last two double sums above, we interchange the summation indices
$a$ and~$b$. Then the first and fourth double sum can be combined into
one double sum, as well as the second and third double sum. 
Thus, the above expression simplifies to
{\allowdisplaybreaks
\begin{align*}
&-\sum_{a=0}^S\sum_{b=0}^{T+k}
c_{a,S}(n)\,c_{b,T+k}(m)\, (2n-2a+1)_{2a}\,(2m-2b+1)_{2b}
(n-a)\\
&\kern7cm
\times
\binom {2n-2a}{n-a}
\binom {2m-2b}{m-b}
{\binom {2n}{n}}^{-1}
{\binom {2m}{m}}^{-1}
\\
&\kern0cm
+\sum_{a=0}^{S+k}\sum_{b=0}^{T}
c_{a,S+k}(n)\,c_{b,T}(m)\, (2n-2a+1)_{2a}\,(2m-2b+1)_{2b}
(n-a)
\\
&\kern7cm
\times
\binom {2n-2a}{n-a}
\binom {2m-2b}{m-b}
{\binom {2n}{n}}^{-1}
{\binom {2m}{m}}^{-1}
\\
&\kern0cm
-0^t
\sum_{b=0} ^{S+k} c_{b,S+k}(n)\,(n-b)_{b+1}\,(n-b+1)_b.
\end{align*}}%
In both double sums, the sum over $b$ can be evaluated by means of
\eqref{eq:0^{2S}}. This leads us to the expression
\begin{multline*}
-0^{2T+2k}\sum_{a=0}^S
c_{a,S}(n)\, (n-a)_{a+1}\,(n-a+1)_{a}
+0^{2T}\sum_{a=0}^{S+k}
c_{a,S+k}(n)\, (n-a)_{a+1}\,(n-a+1)_{a}
\\
-0^t
\sum_{b=0} ^{S+k} c_{b,S+k}(n)\,(n-b)_{b+1}\,(n-b+1)_b,
\end{multline*}
which visibly vanishes due to our assumptions that $k>0$
and that $t$ is even.
\end{proof}

We now turn to our results for double sums of the form
\eqref{eq:sumallg2} with odd~$k$ (and $\beta=1$). 
We first state our results for $m=n$
and immediately thereafter the one we obtain for generic $n$ and $m$
in the case where $s$ and $t$ are both odd. We then indicate 
the proofs of both theorems.

\begin{theorem} \label{thm:3}
Let $s,t,k$ and $n$ be non-negative integers. 

If $s$ and $t$ are not both odd, then
\begin{multline} \label{eq:2k-11} 
\sum_{-n\le i,j\le n}\left\vert i^sj^t(j^{2k+1}-i^{2k+1})\right\vert
\binom {2n}{n+i}
\binom {2n}{n+j}\\
=\frac {X^{(1)}_{s,t,k}(n)}{(4n-1)(4n-3)\cdots(4n-2S-2T-2k+1)}{\binom {4n}{2n}}
\kern2cm\\
+
\frac {X^{(2)}_{s,t,k}(n)}{(2n-1)(2n-3)\cdots
(2n-2\cl{(S+T+k)/2}+1)}{\binom {2n}n}^2\\
+X^{(3)}_{s,t,k}(n)\cdot 4^n\binom {2n}n
+X^{(4)}_{s,t,k}(n)\cdot 16^n,
\end{multline}
where the $X^{(i)}_{s,t,k}(n)$, $i=1,2,3,4$, are polynomials in $n$,
$S=\fl{s/2}$ and $T=\fl{t/2}$.

More specifically,

\begin{enumerate} 
\item if $s$ and $t$ are even, then, as polynomials in $n$,
$X^{(1)}_{s,t,k}(n)$ is of degree at most $3S+3T+3k$,
and $X^{(2)}_{s,t,k}(n)$,
$X^{(3)}_{s,t,k}(n)$,
and $X^{(4)}_{s,t,k}(n)$ are identically zero;
\item if $s$ is odd and $t$ is even, then, as polynomials in $n$,
$X^{(2)}_{s,t,k}(n)$ is of degree at most 
$2S+2T+2k+1+\cl{(S+T+k)/2}$,
$X^{(4)}_{s,t,k}(n)$ is of degree at most $2S+2T+2k+1$,
and $X^{(1)}_{s,t,k}(n)$
and $X^{(3)}_{s,t,k}(n)$ are identically zero;
\item if $s$ is even and $t$ is odd, then, as polynomials in $n$,
$X^{(2)}_{s,t,k}(n)$ is of degree at most 
$2S+2T+2k+1+\cl{(S+T+k)/2}$,
$X^{(4)}_{s,t,k}(n)$ is of degree at most $2S+2T+2k+1$,
and $X^{(1)}_{s,t,k}(n)$
and $X^{(3)}_{s,t,k}(n)$ are identically zero.
\end{enumerate}

If $s$ and $t$ are odd, then
\begin{multline} \label{eq:2k-11B} 
\sum_{-n\le i,j\le n}\left\vert i^sj^t(j^{2k+1}-i^{2k+1})\right\vert
\binom {2n}{n+i}
\binom {2n}{n+j}\\
=\frac {X^{(1)}_{s,t,k}(n)}{(4n-1)(4n-3)\cdots(4n-2S-2T-2k-1)}{\binom {4n}{2n}}
+X^{(3)}_{s,t,k}(n)\cdot 4^n\binom {2n}n,
\end{multline}
where $S=\fl{s/2}$ and $T=\fl{t/2}$, and, as polynomials in $n$,
$X^{(1)}_{s,t,k}(n)$ is of degree at most $3S+3T+3k+2$,
and $X^{(3)}_{s,t,k}(n)$ is of degree at most $2S+2T+2k+2$.
\end{theorem}

\begin{theorem} \label{thm:3B}
Let $s,t,k$ and $n,m$ be non-negative integers. 
If $s$ and $t$ are both odd, then
\begin{multline} \label{eq:10B}  
\sum_{i,j}\left\vert i^sj^t(j^{2k+1}-i^{2k+1})\right\vert
\binom {2n}{n+i}
\binom {2m}{m+j}\\
=\frac {Y^{(1)}_{s,t,k}(n,m)}{(2n+2m-1)(2n+2m-3)\cdots(2n+2m-2S-2T-2k-1)}
{\binom {2n+2m}{n+m}}
\\
+Y^{(3)}_{s,t,k}(n,m)\cdot 4^m\binom {2n}n
+Y^{(4)}_{s,t,k}(n,m)\cdot 4^n\binom {2m}m,
\end{multline}
where $S=\fl{s/2}$ and $T=\fl{t/2}$, and, as  polynomials in $n$ and~$m$,
$Y^{(1)}_{s,t,k}(n,m)$ is of degree at most $3S+3T+3k+3$,
and $Y^{(3)}_{s,t,k}(n,m)$ and $Y^{(4)}_{s,t,k}(n,m)$ are of degree at most 
$2S+2T+2k+1$.
\end{theorem}

\begin{remarkno}
From the proof below it is obvious that 
also here (cf.\ Remark~\ref{rem:expl}) explicit formulae for the polynomials
$X^{(i)}_{s,t,k}(n)$ and $Y^{(i)}_{s,t,k}(n,m)$, $i=1,2,3,4$, 
exist in terms of coefficients
$c_{a,A}(n)$ and $c_{b,B}(m)$, for various specific choices of $A$ and~$B$,
with Lemma~\ref{lem:6} providing an explicit formula for these coefficients.
\end{remarkno}

\begin{proof}[Proof of Theorems \ref{thm:3} and \ref{thm:3B}]
We use the operations $(i,j)\to(-i,j)$ and
$(i,j)\to(i,-j)$ (but not $(i,j)\to(j,i)$).
What we get is (for the proof of Theorem~\ref{thm:3} we have to
assume that $m=n$)
{\allowdisplaybreaks
\begin{align}
\notag
\sum_{i,j}&\left\vert i^sj^t(j^{2k+1}-i^{2k+1})\right\vert
\binom {2n}{n+i}
\binom {2m}{m+j}\\
\notag
&=
\frac {1} {2}
\sum_{i,j}\Big(
\left\vert i^sj^t(j^{2k+1}-i^{2k+1})\right\vert
+\left\vert i^sj^t(j^{2k+1}+i^{2k+1})\right\vert
\Big)
\binom {2n}{n+i}
\binom {2m}{m+j}\\
\notag
&=
2
\sum_{0\le i,j}\al(i=0)\,\al(j=0)
\Big(
\left\vert i^sj^t(j^{2k+1}-i^{2k+1})\right\vert\\
\notag
&\kern5cm
+\left\vert i^sj^t(j^{2k+1}+i^{2k+1})\right\vert
\Big)
\binom {2n}{n+i}
\binom {2m}{m+j}\\
\notag
&=
2
\sum_{0\le i\le j}\Big(
\left\vert i^sj^t(j^{2k+1}-i^{2k+1})\right\vert
+\left\vert i^sj^t(j^{2k+1}+i^{2k+1})\right\vert
\Big)
\binom {2n}{n+i}
\binom {2m}{m+j}\\
\notag
&\kern1cm
+2
\sum_{0\le i< j}\Big(
\left\vert i^tj^s(j^{2k+1}-i^{2k+1})\right\vert
+\left\vert i^tj^s(j^{2k+1}+i^{2k+1})\right\vert
\Big)
\binom {2n}{n+j}
\binom {2m}{m+i}\\
\notag
&\kern1cm
-2
\binom {2n}{n}0^s
\sum_{0\le j}
j^{t+2k+1}
\binom {2m}{m+j}
-2
\binom {2m}{m}0^t
\sum_{0\le i}
i^{s+2k+1}
\binom {2n}{n+i}\\
\notag
&=
4
\sum_{0\le i\le j}
i^sj^{t+2k+1}
\binom {2n}{n+i}
\binom {2m}{m+j}
+4
\sum_{0\le i\le j}
i^tj^{s+2k+1}
\binom {2n}{n+j}
\binom {2m}{m+i}\\
\notag
&\kern1cm
-2
\binom {2n}{n}0^s
\sum_{0\le j}
j^{t+2k+1}
\binom {2m}{m+j}
-2
\binom {2m}{m}0^t
\sum_{0\le i}
i^{s+2k+1}
\binom {2n}{n+i}\\
&\kern1cm
-4
\sum_{j\ge1}
j^{s+t+2k+1}
\binom {2n}{n+j}
\binom {2m}{m+j},
\label{eq:7C}
\end{align}}%
where $\alpha(\mathcal A)$ has the same meaning as in the proof of
Theorem~\ref{thm:2}.
To the single sums over~$i$ and over~$j$, 
we apply Lemmas~\ref{lem:5} and \ref{lem:4}.
In order to evaluate the sums over $0\le i\le j$, we proceed as in
the proof of Proposition~\ref{prop:0}. 
That is, we apply the expansions \eqref{eq:deci} and
\eqref{eq:decj} (with $n$ replaced by $m$ if appropriate),
and subsequently we use Lemmas~\ref{lem:10}--\ref{lem:22} to
evaluate the sums over $0\le i\le j$. Inspection of the result
makes all assertions of the theorem obvious, except for the
claims of the vanishing of the polynomial $X^{(2)}_{s,t,k}(n)$ 
in Theorem~\ref{thm:3}, Item~(1),
of the vanishing of the polynomial $X^{(1)}_{s,t,k}(n)$ 
in Theorem~\ref{thm:3}, Items~(2) and~(3),
and of the claim that the coefficient of ${\binom {2n}n}^2$
in Theorem~\ref{thm:3}, right-hand side of
\eqref{eq:2k-11B}, vanishes, as well as 
the coefficient of $\binom {2n}n\binom {2m}m$
in Theorem~\ref{thm:3B}, right-hand side of \eqref{eq:10B}.

Below we treat the last case, that is, the case of generic $n$ and~$m$ 
where $s$ and $t$ are both odd.
The other claims can be handled completely analogously.

Following the above described procedure, using \eqref{eq:aux7b} with
$h=S+T+k+1$ and $k=0$ for the evaluation of the sum over $j$ in the
last line of \eqref{eq:7C}, we obtain from
Lemma~\ref{lem:20} that the coefficient of $\binom {2n}n\binom {2m}m$
in the expression \eqref{eq:7C} equals
\begin{multline*} 
4\sum_{a=0} ^{S}
\sum_{b=0} ^{T+k+1}
c_{a,S}(n)\,c_{b,T+k+1}(m) \,(n-a+1)_{a}^2\,(m-b+1)_{b}^2\,
\frac {(n-a)(m-b)} {4(n+m-a-b)}
\\
+
4\sum_{b=0} ^{T}
\sum_{a=0} ^{S+k+1}
c_{b,T}(m)\,c_{a,S+k+1}(n) \,(n-a+1)_{a}^2\,(m-b+1)_{b}^2\,
\frac {(n-a)(m-b)} {4(n+m-a-b)}\\
-4\sum_{b=0} ^{S+T+k+1}c_{b,S+T+k+1}(n) \,(n-b)_{b+1}\,(n-b+1)_{b}
\frac {m} {2(n+m-b)}.
\end{multline*} 
If we now use \eqref{eq:aux7b} with
$(S,T+k+1)$, $(S+k+1,T)$, and $(S+T+k+1,0)$ in place of $(h,k)$,
we see that the above expression vanishes.
This establishes the assertion about the ``non-appearance" of
the term $\binom {2n}n\binom {2m}m$ in
Theorem~\ref{thm:3B}, and thus also the assertion about 
the ``non-appearance" of ${\binom {2n}n}^2$ in Eq.~\eqref{eq:2k-11B} of
Theorem~\ref{thm:3}.
\end{proof}

\begin{remark} \label{rem:1}
(1) It is obvious from the proofs of Theorems~\ref{thm:2}--\ref{thm:3B} 
that we could deduce analogous theorems for the more
general sums \eqref{eq:sumallg} and \eqref{eq:sumallg2}. 
We omit this here for the sake of brevity, but provide an example
of such an evaluation in \eqref{eq:beta=3} below.

\medskip
(2)
Theorems~\ref{thm:2}--\ref{thm:3B} imply an obvious algorithm to
evaluate a sum of the form \eqref{eq:sumallg} or \eqref{eq:sumallg2} 
for any given
$s,t,k$ and $\beta=1$. (Again, an extension to arbitrary~$\beta$
would be possible.) Namely, addressing the case of odd~$k$ and $m=n$,
one makes an indeterminate Ansatz for the
polynomials
$X^{(1)}_{s,t}(n),X^{(2)}_{s,t}(n),X^{(3)}_{s,t}(n),X^{(4)}_{s,t}(n)$
in Theorem~\ref{thm:3}, one evaluates the sum on the left-hand side
of \eqref{eq:2k-11} for $n=S+T+k,\dots,N+S+T+k$, where $N$ is the number of
indeterminates involved in the Ansatz, giving rise to a system of
$N+1$ linear equations for the $N$ indeterminates. One solves the
system and substitutes the solutions on the right-hand side of
\eqref{eq:2k-11}. 

In this manner, we can establish any of the proved or
conjectured double sum evaluations in \cite{BOOPAA}. For example, we obtain
{\allowdisplaybreaks
\begin{align}
\sum_{-n\le i,j\le n}\left\vert j^3-i^3\right\vert
\binom {2n}{n+i}
\binom {2n}{n+j}
&=
\frac {4n^2(5n-2)} {4n-1}\binom {4n-1}{2n-1},\\
\sum_{-n\le i,j\le n}\left\vert j^5-i^5\right\vert
\binom {2n}{n+i}
\binom {2n}{n+j}
&=
\frac {8n^2(43n^3-70n^2+36n-6)} {(4n-2)(4n-3)}\binom {4n-2}{2n-2},\\
\label{eq:112}
\sum_{i,j}\left\vert ij(j^2-i^2)\right\vert
\binom {2n}{n+i}
\binom {2m}{m+j}
&=
\frac {mn(n^2-n+m^2-m)} {n+m-1}{\binom {2n}{n}}{\binom {2m}{m}},\\
\sum_{i,j}\left\vert i^3j^3(j^2-i^2)\right\vert
\binom {2n}{n+i}
\binom {2m}{m+j}
&=
\frac {2n^2m^2P_1(n,m)} {(n+m-1)(n+m-2)(n+m-3)}
\notag\\
\label{eq:332}
&\kern4.5cm
\times{\binom {2n}{n}}
{\binom {2m}{m}},\\
\sum_{-n\le i,j\le n}\left\vert j^7-i^7\right\vert
\binom {2n}{n+i}
\binom {2n}{n+j}
&=
\frac {16n^2P_2(n)} {(4n-3)(4n-4)(4n-5)}\binom {4n-3}{2n-3},
\end{align}}%
where
\begin{multline*}
P_1(n,m)=n^4+2 n^3 m-6 n^3-6 n^2
   m+11 n^2+2 n m^3-6 n m^2+12 n m\\
-10
   n+m^4-6 m^3+11 m^2-10
   m+4
\end{multline*}
and
$$
P_2(n)= 531n^5 - 1960n^4 + 2800n^3 - 1952n^2 + 668n - 90.
$$
These identities (with $m=n$ for \eqref{eq:112} and \eqref{eq:332}) 
establish the conjectured identities 
(5.7)--(5.9), (5.12), (5.14) from \cite{BOOPAA}.
However, our machinery also yields
\begin{multline}
\sum_{-n\le i,j\le n}\left\vert i^4j^3(j^5-i^5)\right\vert
\binom {2n}{n+i}
\binom {2n}{n+j}\\
=
\frac{n^4\left(414 n^6-2968 n^5+8332 n^4-11853
   n^3+9105 n^2-3592 n+565\right)
   }{2 (2 n-5) (2 n-3) (2
   n-1)}\binom{2 n}{n}^2\\
+\frac {1} {128}n^2 (3 n-1) \left(105
   n^3-210 n^2+147 n-34\right) 16^n
\end{multline}
or
\begin{multline} \label{eq:beta=3}
\sum_{-n\le i,j\le n}\left\vert ij(j^3-i^3)^3\right\vert
\binom {2n}{n+i}
\binom {2n}{n+j}\\
=
\frac {1} {16} n^2 \left(1377 n^4-3870 n^3+4503
   n^2-2442 n+496\right)4^n \binom{2
   n}{n}\\
-\frac{4 n^3 P_3(n) }
   {(4 n-7) (4 n-5) (4 n-3) (4 n-1)}\binom{4 n}{2
   n},
\end{multline}
where
$$
P_3(n)=1917 n^7-11160
   n^6+26439 n^5-33189 n^4+23945 n^3-9951
   n^2+2206 n-201,
$$
for example.
Obviously, one could also use the summation tools ddescribed in
Section~\ref{sec:Sigma} to simplify the left-hand sides to their
right-hand sides.

\medskip
(3) In case the reader wonders what would happen if, instead of
double sums of the form \eqref{eq:sumallg2}, we would consider
double sums of the form
\begin{equation} \label{eq:2n+12m+1} 
\sum_{i,j}  
|i^sj^t(i^k-j^k)^\beta| \binom {2n+1} {n+i} \binom {2m+1} {m+j}
\end{equation}
or mixed sums
\begin{equation} \label{eq:2n+12m} 
\sum_{i,j}  
|i^sj^t(i^k-j^k)^\beta| \binom {2n+1} {n+i} \binom {2m} {m+j},
\end{equation}
we point out that
$$
\binom {2n+1}{n+i}=\frac {n+i+1} {2n+2}\binom {2(n+1)}{n+1+i}
=\frac {1} {2}\binom {2(n+1)}{n+1+i}+
\frac {i} {2n+2}\binom {2(n+1)}{n+1+i},
$$
and thus double sums of the form \eqref{eq:2n+12m+1} or
\eqref{eq:2n+12m} can be written as a linear combination of our
familiar double sums \eqref{eq:sumallg2}.
\end{remark}

\section{An inequality for a binomial double sum}
\label{sec:Ungl}

In this final section, we establish Conjecture~3.1 from \cite{BOOPAA},
which provides a lower bound on sums of the form \eqref{eq:sumallg2}
with $s=t=0$, $k=2$, $\beta=1$.

\begin{theorem} \label{thm:1}
For all non-negative integers $m$ and $n$,
we have
\begin{equation} \label{eq:1} 
\sum_{i,j}\left\vert j^2-i^2\right\vert
\binom {2n}{n+i}
\binom {2m}{m+j}
\ge 2nm\binom {2n}n \binom {2m}m,
\end{equation}
and equality holds if and only if $m=n$.
\end{theorem}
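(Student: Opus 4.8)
The plan is to derive an exact closed form for the left-hand side as a simple power-of-two term minus a positive double sum of tail binomial sums, and then to deduce the inequality from two applications of the arithmetic--geometric--mean inequality together with \eqref{eq:BO}. Throughout write $A(m,n)$ for the left-hand side of \eqref{eq:1}; since the summand is invariant under $(i,j)\mapsto(-i,j)$, under $(i,j)\mapsto(i,-j)$, and under interchanging the roles of $(n,i)$ and $(m,j)$, I may assume $n\ge m$.

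First I would strip off the absolute value. Using $|j^2-i^2|=(j^2-i^2)+2\max(i^2-j^2,0)$ together with the elementary evaluations $\sum_i\binom{2n}{n+i}=2^{2n}$ and $\sum_i i^2\binom{2n}{n+i}=n\,2^{2n-1}$, the ``signed'' part contributes $2^{2m+2n-1}(m-n)$, while the positive part becomes $4\sum_{i=1}^{n}\binom{2n}{n+i}\,G_m(i)$, where $G_m(i)=\sum_{|j|<i}(i^2-j^2)\binom{2m}{m+j}$. The crucial computational step is the telescoping relation $G_m(i)-G_m(i-1)=(2i-1)\sum_{|j|<i}\binom{2m}{m+j}$, which, combined with $\sum_{\ell=1}^i(2\ell-1)=i^2$, allows me to interchange the order of summation. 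Evaluating these truncated summands is exactly the situation to which Gosper's algorithm applies, and I expect this bookkeeping to be the main obstacle. After simplification one is led to the exact formula
$$ A(m,n)=2^{2m+2n-1}(m+n)-8\sum_{\ell=1}^{m}(2\ell-1)\,T_\ell(m)\,T_\ell(n), $$
where $T_\ell(N)=\sum_{i=\ell}^{N}\binom{2N}{N+i}$ and $n\ge m$ ensures $T_\ell(m)=0$ for $\ell>m$.

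Next I would extract the diagonal information. Setting $m=n$ and invoking \eqref{eq:BO}, the formula yields $8\sum_{\ell=1}^{N}(2\ell-1)T_\ell(N)^2=2^{4N}N-2N^2\binom{2N}{N}^2$ for every $N$; in particular this quantity is nonnegative. Abbreviating $\alpha=2^{4m}m$, $\beta=2^{4n}n$, $p=2m^2\binom{2m}{m}^2$, $q=2n^2\binom{2n}{n}^2$, this reads $8\sum_{\ell=1}^{m}(2\ell-1)T_\ell(m)^2=\alpha-p$ and $8\sum_{\ell=1}^{n}(2\ell-1)T_\ell(n)^2=\beta-q$, and the target right-hand side of \eqref{eq:1} is precisely $\sqrt{pq}=2mn\binom{2m}{m}\binom{2n}{n}$. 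Applying the Cauchy--Schwarz inequality to $\sum_{\ell=1}^{m}(2\ell-1)T_\ell(m)T_\ell(n)$ and then discarding the nonnegative tail terms $\ell=m+1,\dots,n$ gives $8\sum_{\ell=1}^{m}(2\ell-1)T_\ell(m)T_\ell(n)\le\sqrt{(\alpha-p)(\beta-q)}$.

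Finally I would chain elementary inequalities. From the formula and the last bound, $A(m,n)\ge 2^{2m+2n-1}(m+n)-\sqrt{(\alpha-p)(\beta-q)}$. The arithmetic--geometric--mean inequality gives $2^{2m+2n-1}(m+n)\ge 2^{2m+2n}\sqrt{mn}=\sqrt{\alpha\beta}$, and a second, purely algebraic, step establishes $\sqrt{\alpha\beta}\ge\sqrt{(\alpha-p)(\beta-q)}+\sqrt{pq}$: squaring $\sqrt{\alpha\beta}-\sqrt{pq}\ge\sqrt{(\alpha-p)(\beta-q)}$ reduces it to $\alpha q+\beta p\ge 2\sqrt{\alpha\beta pq}$, which is again arithmetic--geometric mean. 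Combining these yields $A(m,n)\ge\sqrt{pq}=2mn\binom{2m}{m}\binom{2n}{n}$, as required. For the equality statement, the step $2^{2m+2n-1}(m+n)\ge\sqrt{\alpha\beta}$ is strict unless $m=n$, so equality in \eqref{eq:1} forces $m=n$; conversely, when $m=n$ the evaluation \eqref{eq:BO} shows that both sides equal $2n^2\binom{2n}{n}^2$.
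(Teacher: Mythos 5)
Your argument is correct, and it is genuinely different from the one in the paper. The paper proves \eqref{eq:1} by establishing the exact identity of Lemma~\ref{lem:1}, which exhibits the excess of the left-hand side over $\frac{nm}{2}\binom{2n}{n}\binom{2m}{m}$ (after symmetrisation) as $2(m-n)$ times a sum whose individual terms are shown to be nonnegative for $m\ge n$ in Lemma~\ref{lem:3}; the identity itself needs a Gosper-certified telescoping (Lemma~\ref{lem:2}), and the whole proof is independent of \eqref{eq:BO}. You instead derive the closed form
$$
\sum_{i,j}\left\vert j^2-i^2\right\vert\binom{2n}{n+i}\binom{2m}{m+j}
=2^{2m+2n-1}(m+n)-8\sum_{\ell=1}^{m}(2\ell-1)\,T_\ell(m)\,T_\ell(n),
\qquad T_\ell(N)=\sum_{i=\ell}^{N}\binom{2N}{N+i},
$$
which I checked is correct (your telescoping of $G_m$ together with $\sum_{|j|<\ell}\binom{2m}{m+j}=2^{2m}-2T_\ell(m)$ and the moment identities gives it directly; no appeal to Gosper's algorithm is actually needed here, contrary to your aside), and then you read off the diagonal weights $8\sum_{\ell}(2\ell-1)T_\ell(N)^2=N\,2^{4N}-2N^2\binom{2N}{N}^2$ from \eqref{eq:BO}. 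The conclusion then follows from Cauchy--Schwarz, the arithmetic--geometric-mean inequality $2^{2m+2n-1}(m+n)\ge\sqrt{\alpha\beta}$, and the Acz\'el-type inequality $\sqrt{\alpha\beta}\ge\sqrt{(\alpha-p)(\beta-q)}+\sqrt{pq}$; the equality analysis via strictness of the AM--GM step for $m\ne n$ is also sound. What your route buys is the complete avoidance of any hypergeometric telescoping and a rather transparent convexity mechanism; what it costs is that it takes the evaluation \eqref{eq:BO} as an external input (both for the diagonal identity and for the equality case), whereas the paper's proof is self-contained and, through Lemma~\ref{lem:3}, gives a termwise-positive representation of the excess, which is finer information than the inequality itself.
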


\begin{proof}
Without loss of generality, we assume $m\ge n$.

Using the operations 
$(i,j)\to(-i,j)$
and $(i,j)\to(i,-j)$,
which do not change the summand, we see that \eqref{eq:1} is equivalent
to
\begin{equation} \label{eq:2} 
\sum_{0\le i,j}
\alpha(i=0)\,
\alpha(j=0)\,
\left\vert j^2-i^2\right\vert
\binom {2n}{n+i}
\binom {2m}{m+j}
\ge \frac {nm} {2}\binom {2n}n \binom {2m}m,
\end{equation}
where $\alpha(i=0)$ has the same meaning as in the proof of
Proposition~\ref{thm:2}.
By Lemma~\ref{lem:1}, we see that the claim would be established
if we were able to show that
\begin{equation} \label{eq:4} 
\sum_{0\le i<j}
\alpha(i=0)\left(
\binom {2n}{n+i}
\binom {2m-2}{m+j-1}
-
\binom {2n-2}{n+j-1}
\binom {2m}{m+i}
\right)\end{equation}
is non-negative, with equality holding only if $m=n$.
Indeed, Lemma~\ref{lem:3} says that these two last assertions hold
even for each summand in \eqref{eq:4} individually.
(It is at this point that our assumption $m\ge n$ comes into play.) 
This completes the proof of the theorem.
\end{proof}

\begin{lemma} \label{lem:1}
For all non-negative integers $m$ and $n$, we have
\begin{align} 
\notag
&\sum_{0\le i,j}
\alpha(i=0)\,
\alpha(j=0)\,
\left\vert j^2-i^2\right\vert
\binom {2n}{n+i}
\binom {2m}{m+j}\\
\notag
&\kern.5cm
=\frac {nm} {2}
\binom {2n}{n}
\binom {2m}{m}\\
&\kern1.3cm
+2(m-n)
\sum_{0\le i<j}
\alpha(i=0)\left(
\binom {2n}{n+i}
\binom {2m-2}{m+j-1}
-
\binom {2n-2}{n+j-1}
\binom {2m}{m+i}
\right).
\label{eq:5}
\end{align}
\end{lemma}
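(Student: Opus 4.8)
The plan is to prove \eqref{eq:5} by direct manipulation, peeling the quadratic weight $j^2-i^2$ off the binomial products until one is left with the single difference of products occurring on the right-hand side of \eqref{eq:5}, plus a closed-form remainder. First I would fold the absolute value onto a triangle. Splitting the summation range $0\le i,j$ according to the sign of $j^2-i^2$ (the diagonal $i=j$ contributes nothing) and applying the reflection $(i,j)\mapsto(j,i)$, which interchanges the roles of $n$ and $m$, rewrites the left-hand side of \eqref{eq:5} as
\[
\sum_{0\le i<j}\alpha(i=0)\,(j^2-i^2)\left[\binom{2n}{n+i}\binom{2m}{m+j}+\binom{2n}{n+j}\binom{2m}{m+i}\right],
\]
the factor $\alpha(i=0)=\tfrac12$ recording the halving along the edge $i=0$. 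It is a useful consistency check that both sides of \eqref{eq:5} are invariant under $n\leftrightarrow m$: the left-hand side and the closed term visibly, and $2(m-n)$ times the displayed sum because both $m-n$ and the sum change sign.

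The engine of the proof is a single telescoping identity, which is what Gosper's algorithm produces for $j\binom{2m}{m+j}$: since $j\binom{2m}{m+j}=m\bigl(\binom{2m-1}{m+j-1}-\binom{2m-1}{m+j}\bigr)=-m\,\Delta_j\binom{2m-1}{m+j-1}$, summation of the tail collapses to
\[
\sum_{j\ge i+1}j\binom{2m}{m+j}=m\binom{2m-1}{m+i}=\frac{m-i}{2}\binom{2m}{m+i},
\]
and likewise with $m$ replaced by $n$. The clean factor $m-i$ appearing here is what will ultimately manufacture the coefficient $m-n$ in \eqref{eq:5}. The quadratic weight is brought within reach of this identity by the elementary lowering relation $(m^2-j^2)\binom{2m}{m+j}=2m(2m-1)\binom{2m-2}{m+j-1}$ (and its $n$-analogue), which is precisely what produces the reduced binomials $\binom{2m-2}{m+j-1}$ and $\binom{2n-2}{n+j-1}$ that occur in \eqref{eq:5}.

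I would then carry out the inner summation over $j$ by Abel summation against the antidifference above. Writing $j^2-i^2=(j-i)(j+i)$ and summing by parts turns each inner $j$-sum into a boundary value at $j=i+1$ (the contribution at infinity vanishing, since the binomials have finite support) plus a tail of the shape $\sum_{j\ge i+1}\binom{2m-1}{m+j}$. These tail row-sums have no closed form individually, and the heart of the argument is that, once the contributions of the two products and of $2(m-n)$ times the sum in \eqref{eq:5} are assembled, the tail row-sums cancel identically, leaving only the closed boundary terms supplied by the telescoping identity. Summing the surviving boundary terms over $i\ge0$ is once more a telescoping/Chu--Vandermonde evaluation, and, after tracking the $\alpha(i=0)$ halving and the $i=j$ corrections, it collapses to $\tfrac{nm}{2}\binom{2n}{n}\binom{2m}{m}$, while the reduced-binomial terms reorganise into $2(m-n)$ times the sum in \eqref{eq:5}; this yields \eqref{eq:5} and hence Lemma~\ref{lem:1}.

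The step I expect to be the main obstacle is exactly this cancellation of the tail row-sums. The individual weighted sums $\sum_{j>i}(j^2-i^2)\binom{2m}{m+j}$ are genuinely \emph{not} Gosper-summable: an even-power weight $\beta j^2+\gamma$ against $\binom{2m}{m+j}$ satisfies Gosper's criterion only when $\gamma=-\tfrac{m}{2}\beta$, which fails for the weight $j^2-i^2$ for generic $i$, so no single inner sum closes on its own. It is only the precise linear combination dictated by the coefficient $2(m-n)$ that annihilates the non-summable (even) part and leaves a telescoping remainder; verifying this cancellation, and handling the accompanying boundary bookkeeping on the edges $i=0$ and $i=j$, is where the real work lies.
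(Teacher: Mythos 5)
Your preliminary reductions are all correct and in fact coincide with the opening moves of the paper's own proof: folding the sum onto the triangle $0\le i<j$ with the $\alpha(i=0)$ bookkeeping, and the lowering relation $(m^2-j^2)\binom{2m}{m+j}=2m(2m-1)\binom{2m-2}{m+j-1}$, which is exactly how the reduced binomials $\binom{2m-2}{m+j-1}$ and $\binom{2n-2}{n+j-1}$ enter. The gap is in the step you yourself flag as ``where the real work lies'': you assert, but do not prove, that the non-closed-form pieces assemble into precisely $2(m-n)$ times the right-hand sum plus $\frac{mn}{2}\binom{2n}{n}\binom{2m}{m}$. Concretely, after writing $j^2-i^2=(m^2-i^2)-(m^2-j^2)$ (resp.\ $(n^2-i^2)-(n^2-j^2)$ for the reflected product) and lowering, the reduced-binomial double sums appear with coefficients $-2m(2m-1)$ and $-2n(2n-1)$, not $\pm2(m-n)$, and there survive double sums of the \emph{un-lowered} type $\sum_{0\le i<j}\alpha(i=0)\binom{2n}{n+i}\binom{2m}{m+j}$ weighted by $m^2-n^2$, together with sums in which the lowered binomial sits in the $i$-slot rather than the $j$-slot. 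Your proposal contains no mechanism for converting these leftovers into the lowered sums appearing on the right-hand side of \eqref{eq:5}, so the ``tail row-sums'' do not cancel as stated.

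In the paper this conversion requires two separate devices. First, the double Pascal decomposition $\binom{2m}{m+j}=\binom{2m-2}{m+j}+2\binom{2m-2}{m+j-1}+\binom{2m-2}{m+j-2}$ followed by simultaneous shifts of $i$ and $j$, which cancels the bulk of the mismatched double sums and leaves only single-sum boundary terms. Second, and crucially, Lemma~\ref{lem:2}: the combination of the un-lowered antisymmetrised double sum with four times the lowered one evaluates in closed form to $-\frac{m-n}{4(m+n)}\binom{2n}{n}\binom{2m}{m}$, and the proof of that identity rests on a nontrivial Gosper certificate, not on Chu--Vandermonde. Without an analogue of Lemma~\ref{lem:2} the even, non-Gosper-summable parts you correctly identify are not annihilated, and the constant term is not obtainable from boundary terms alone: it arises as $\frac{n^2+m^2}{4}-\frac{(m-n)^2}{4}=\frac{mn}{2}$, with the second contribution coming precisely from that trade. (A smaller inaccuracy: your tails involve $\binom{2m-1}{m+j}$ while the target sum involves $\binom{2m-2}{m+j-1}$; this is repairable by one more application of Pascal's rule, but it is another piece of bookkeeping the proposal leaves undone.)
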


\begin{proof}
We write
$$
j^2-i^2=(n^2-i^2)-(m^2-j^2)+(m^2-n^2)
$$
and decompose the sum on the left-hand side of \eqref{eq:5}
into two parts according to whether
$i<j$ or $i>j$.
Thereby, the sum on the left-hand side of \eqref{eq:5} becomes
\begin{multline} \label{eq:3}
(2n-1)_2\sum_{0\le i<j}
\alpha(i=0)\,
\binom {2n-2}{n+i-1}
\binom {2m}{m+j}\\
-(2m-1)_2\sum_{0\le i<j}
\alpha(i=0)\,
\binom {2n}{n+i}
\binom {2m-2}{m+j-1}
\\
-(2n-1)_2\sum_{0\le j<i}
\alpha(j=0)\,
\binom {2n-2}{n+i-1}
\binom {2m}{m+j}\kern3cm\\
+(2m-1)_2\sum_{0\le j<i}
\alpha(j=0)\,
\binom {2n}{n+i}
\binom {2m-2}{m+j-1}
\\
+(m^2-n^2)\sum_{0\le i<j}
\alpha(i=0)\left(
\binom {2n}{n+i}
\binom {2m}{m+j}
-
\binom {2n}{n+j}
\binom {2m}{m+i}\right).
\end{multline}

We next show how to evaluate the first two (double) sums in \eqref{eq:3}.
In the first line of \eqref{eq:3}, we use the decomposition
\begin{equation} \label{eq:bindecomp} 
\binom {2m}{m+j}=
\binom {2m-2}{m+j}
+2\binom {2m-2}{m+j-1}
+\binom {2m-2}{m+j-2},
\end{equation}
while in the second line we use the same decomposition with $m$
replaced by $n$ and $j$
by~$i$. This leads to
\begin{align*}
&(2n-1)_2\sum_{0\le i<j}
\alpha(i=0)\,
\binom {2n-2}{n+i-1}
\binom {2m}{m+j}\\
&\kern1.5cm
-(2m-1)_2\sum_{0\le i<j}
\alpha(i=0)\,
\binom {2n}{n+i}
\binom {2m-2}{m+j-1}
\\
&\kern.5cm
=(2n-1)_2\sum_{0\le i<j}
\alpha(i=0)\,
\binom {2n-2}{n+i-1}
\binom {2m-2}{m+j}\\
&\kern1.5cm
+(2n-1)_2\sum_{0\le i<j}
\alpha(i=0)\,
\binom {2n-2}{n+i-1}
\binom {2m-2}{m+j-2}\\
&\kern1.5cm
-(2n-1)_2\sum_{0\le i<j}
\alpha(i=0)\,
\binom {2n-2}{n+i}
\binom {2m-2}{m+j-1}\\
&\kern1.5cm
-(2n-1)_2\sum_{0\le i<j}
\alpha(i=0)\,
\binom {2n-2}{n+i-2}
\binom {2m-2}{m+j-1}\\
&\kern1.5cm
+\big((2n-1)_2-(2m-1)_2\big)\sum_{0\le i<j}
\alpha(i=0)\,
\binom {2n}{n+i}
\binom {2m-2}{m+j-1}.
\end{align*}
By a simultaneous shift of $i$ and $j$ by one, one sees that the first
and fourth sum on the right-hand side cancel each other largely,
and the same is true for the second and the third sum.
Thus, we have
{\allowdisplaybreaks
\begin{align*} 
&(2n-1)_2\sum_{0\le i<j}
\alpha(i=0)\,
\binom {2n-2}{n+i-1}
\binom {2m}{m+j}\\
&\kern1.5cm
-(2m-1)_2\sum_{0\le i<j}
\alpha(i=0)\,
\binom {2n}{n+i}
\binom {2m-2}{m+j-1}
\\
&\kern.5cm
=-\frac {1} {2}(2n-1)_2\sum_{0<j}
\binom {2n-2}{n-1}
\binom {2m-2}{m+j}\\
&\kern1.5cm
-\frac {1} {2}(2n-1)_2\sum_{0<j}
\binom {2n-2}{n-2}
\binom {2m-2}{m+j-1}\\
&\kern1.5cm
+\frac {1} {2}(2n-1)_2\sum_{0<j}
\binom {2n-2}{n-1}
\binom {2m-2}{m+j-2}\\
&\kern1.5cm
+\frac {1} {2}(2n-1)_2\sum_{0<j}
\binom {2n-2}{n}
\binom {2m-2}{m+j-1}\\
&\kern1.5cm
+\big((2n-1)_2-(2m-1)_2\big)\sum_{0\le i<j}
\alpha(i=0)\,
\binom {2n}{n+i}
\binom {2m-2}{m+j-1}.
\end{align*}}%
Here, there is more cancellation: the second and fourth sum on
the right-hand side cancel each other, while the first and third
cancel each other in large parts, with only two terms remaining.
As a result, we obtain
\begin{align*} 
&(2n-1)_2\sum_{0\le i<j}
\alpha(i=0)\,
\binom {2n-2}{n+i-1}
\binom {2m}{m+j}\\
&\kern1.5cm
-(2m-1)_2\sum_{0\le i<j}
\alpha(i=0)\,
\binom {2n}{n+i}
\binom {2m-2}{m+j-1}
\\
&\kern.5cm
=
\frac {1} {2}(2n-1)_2
\binom {2n-2}{n-1}
\binom {2m-1}{m}\\
&\kern1.5cm
+\big((2n-1)_2-(2m-1)_2\big)\sum_{0\le i<j}
\alpha(i=0)\,
\binom {2n}{n+i}
\binom {2m-2}{m+j-1}\\
&\kern.5cm
=
\frac {n^2} {4}
\binom {2n}{n}
\binom {2m}{m}\\
&\kern1.5cm
+\big((2n-1)_2-(2m-1)_2\big)\sum_{0\le i<j}
\alpha(i=0)\,
\binom {2n}{n+i}
\binom {2m-2}{m+j-1}.
\end{align*}
The same calculation, with $n$ and $m$ interchanged, yields
\begin{align*} 
&-(2n-1)_2\sum_{0\le j<i}
\alpha(j=0)\,
\binom {2n-2}{n+i-1}
\binom {2m}{m+j}\\
&\kern1.5cm
+(2m-1)_2\sum_{0\le j<i}
\alpha(j=0)\,
\binom {2n}{n+i}
\binom {2m-2}{m+j-1}\\
&\kern.5cm
=
\frac {m^2} {4}
\binom {2n}{n}
\binom {2m}{m}\\
&\kern1.5cm
+\big((2m-1)_2-(2n-1)_2\big)\sum_{0\le i<j}
\alpha(i=0)\,
\binom {2m}{m+i}
\binom {2n-2}{n+j-1}.
\end{align*}
If we put everything together, then we have shown that the sum on the
left-hand side of \eqref{eq:5} equals
{\allowdisplaybreaks
\begin{align*}
&\kern.5cm
\frac {n^2+m^2} {4}
\binom {2n}{n}
\binom {2m}{m}\\
&\kern1.5cm
+\big(4(m^2-n^2)-2(m-n)\big)\\
&\kern2cm
\times
\sum_{0\le i<j}
\alpha(i=0)\left(
\binom {2n-2}{n+j-1}
\binom {2m}{m+i}
-
\binom {2n}{n+i}
\binom {2m-2}{m+j-1}\right)\\
&\kern1.5cm
+(m^2-n^2)\sum_{0\le i<j}
\alpha(i=0)\left(
\binom {2n}{n+i}
\binom {2m}{m+j}
-
\binom {2n}{n+j}
\binom {2m}{m+i}\right).
\end{align*}}%
If we finally use Lemma~\ref{lem:2} in this expression, then the
result is the right-hand side of~\eqref{eq:5}.
\end{proof}

\begin{lemma} \label{lem:2}
For all non-negative integers $m$ and $n$, we have
\begin{multline} \label{eq:lem2}
4
\sum_{0\le i<j}
\alpha(i=0)\left(
\binom {2n-2}{n+j-1}
\binom {2m}{m+i}
-
\binom {2n}{n+i}
\binom {2m-2}{m+j-1}\right)\\
+\sum_{0\le i<j}
\alpha(i=0)\left(
\binom {2n}{n+i}
\binom {2m}{m+j}
-
\binom {2n}{n+j}
\binom {2m}{m+i}\right)\\
=-\frac {m-n} {4(m+n)}\binom {2n}n\binom {2m}m.
\end{multline}
\end{lemma}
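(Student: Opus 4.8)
The plan is to collapse the two double sums into a single sum over the ``diagonal'' index $i$, and then to evaluate that single sum by a terminating Dixon ${}_3F_2$-summation. First I would fix $i$ and regroup the combined summand of \eqref{eq:lem2}, viewed as a function of $j$, by collecting the coefficient of $\binom{2m}{m+i}$ and of $\binom{2n}{n+i}$ separately. Applying Pascal's rule $\binom{2m}{m+j}=\binom{2m-2}{m+j}+2\binom{2m-2}{m+j-1}+\binom{2m-2}{m+j-2}$ to the full binomials coming from the second sum, one finds that the coefficient of $\binom{2n}{n+i}$ becomes the exact second difference $\binom{2m-2}{m+j}-2\binom{2m-2}{m+j-1}+\binom{2m-2}{m+j-2}$, and symmetrically for the coefficient of $\binom{2m}{m+i}$ with $n$ in place of $m$. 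This is precisely the point where the factor $4$ in front of the first sum is needed.

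Summing these second differences over $j>i$ is an indefinite summation that telescopes (an application of Gosper's algorithm does this automatically), so each inner sum collapses to a single boundary term and the entire left-hand side of \eqref{eq:lem2} reduces to the single sum
\[
\sum_{i\ge0}\alpha(i=0)\left[\binom{2m}{m+i}\left(\binom{2n-2}{n+i}-\binom{2n-2}{n+i-1}\right)+\binom{2n}{n+i}\left(\binom{2m-2}{m+i-1}-\binom{2m-2}{m+i}\right)\right].
\]
Using the elementary ratio $\binom{2n-2}{n+i-1}=\frac{(n+i)(n-i)}{2n(2n-1)}\binom{2n}{n+i}$ and its analogue with $m$, the summand simplifies to $(2i+1)\binom{2n}{n+i}\binom{2m}{m+i}\bigl(\frac{m-i}{2m(2m-1)}-\frac{n-i}{2n(2n-1)}\bigr)$.

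Next I would split this summand into its even and odd parts in $i$. The sum of the summand over all $i\in\Z$ vanishes: by the Chu--Vandermonde summation the two telescoped boundary contributions are $\binom{2n+2m-2}{n+m}-\binom{2n+2m-2}{n+m-1}$ and its negative. Consequently the even part contributes nothing (its $\alpha(i=0)$-weighted one-sided sum is exactly half of the full sum), and since the odd part vanishes at $i=0$, the whole expression equals $\frac{n-m}{2mn}\sum_{i\ge1}i\binom{2n}{n+i}\binom{2m}{m+i}$. This last single sum is the crucial point: writing $\binom{2m}{m+i}=\binom{2m}{m-i}$ turns it into a terminating ${}_3F_2$ at argument $1$, which Dixon's summation (exactly as used in the proof of Lemma~\ref{lem:4}) evaluates to $\frac{nm}{2(m+n)}\binom{2n}{n}\binom{2m}{m}$; this is where the denominator $m+n$ on the right-hand side is produced. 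Multiplying out gives $\frac{n-m}{4(m+n)}\binom{2n}{n}\binom{2m}{m}$, which is the asserted value.

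The main obstacle will be the bookkeeping in the first two steps: one must recognise that, after the Pascal expansion, the $j$-dependence organises into exact second differences so that the double sums telescope cleanly, and one must handle the half-weight $\alpha(i=0)$ and the strict inequality $i<j$ correctly at the boundary. Once the reduction to the single sum is in place, the only genuine summation left is the Dixon evaluation, and the striking cancellation of all $2^{2n+2m}$- and $\binom{2n+2m}{n+m}$-type terms — forced by the vanishing of the full-line sum — is exactly what leaves only the product $\binom{2n}{n}\binom{2m}{m}$ on the right-hand side.
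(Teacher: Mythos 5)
Your argument is correct, and its first two stages coincide exactly with the paper's proof: the Pascal expansion \eqref{eq:bindecomp} reorganises the $j$-dependence into exact second differences, the inner sum over $j>i$ telescopes, and one lands on precisely the single sum over $i$ displayed in the paper's computation. The divergence is in how that single sum is then evaluated. The paper factors out $\frac{m-n}{m+n}$ and exhibits the remaining summand as a telescoping difference whose certificate is produced by Gosper's algorithm, so that only the boundary terms at $i=0$ (carrying the weight $\alpha(i=0)=\tfrac12$) survive. You instead rewrite the summand as $(2i+1)\binom{2n}{n+i}\binom{2m}{m+i}\bigl(\tfrac{m-i}{2m(2m-1)}-\tfrac{n-i}{2n(2n-1)}\bigr)$, note via the bilateral Chu--Vandermonde convolution that the sum over all $i\in\Z$ vanishes --- so the even part contributes nothing under the half-weight at $i=0$ --- and reduce to $\tfrac{n-m}{2mn}\sum_{i\ge1}i\binom{2n}{n+i}\binom{2m}{m+i}$, which Dixon's theorem evaluates to $\tfrac{nm}{2(m+n)}\binom{2n}{n}\binom{2m}{m}$; this is literally the two-parameter instance of the Dixon evaluation already carried out in the proof of Lemma~\ref{lem:4} (set $b=n-m$ there). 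I checked the two computational claims on which your route hinges: the odd-in-$i$ part of the prefactor $(2i+1)\bigl(\tfrac{m-i}{2m(2m-1)}-\tfrac{n-i}{2n(2n-1)}\bigr)$ does simplify to $i\cdot\tfrac{n-m}{2mn}$, and it vanishes at $i=0$, so the parity bookkeeping closes. What your version buys is the replacement of a machine-found Gosper certificate by two classical summation theorems both of which appear elsewhere in the paper; the cost is the more delicate even/odd splitting and the care required at the boundary $i=0$ and with the strict inequality $i<j$, which you have handled correctly.
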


\begin{proof}
Using the decomposition \eqref{eq:bindecomp} in the second line
of \eqref{eq:lem2}, we compute
{\allowdisplaybreaks
\begin{align*} 
&
4
\sum_{0\le i<j}
\alpha(i=0)\left(
\binom {2n-2}{n+j-1}
\binom {2m}{m+i}
-
\binom {2n}{n+i}
\binom {2m-2}{m+j-1}\right)\\
&\kern1.5cm
+\sum_{0\le i<j}
\alpha(i=0)\left(
\binom {2n}{n+i}
\binom {2m}{m+j}
-
\binom {2n}{n+j}
\binom {2m}{m+i}\right)\\
&
=
\sum_{0\le i<j}
\alpha(i=0)\bigg(
2\binom {2n-2}{n+j-1}
\binom {2m}{m+i}\\
&\kern3.5cm
-
\binom {2n-2}{n+j}
\binom {2m}{m+i}
-
\binom {2n-2}{n+j-2}
\binom {2m}{m+i}\\
&\kern1.5cm
+\binom {2n}{n+i}
\binom {2m-2}{m+j}
+\binom {2n}{n+i}
\binom {2m-2}{m+j-2}
-
2\binom {2n}{n+i}
\binom {2m-2}{m+j-1}\bigg)\\
&
=
\sum_{0\le i}
\alpha(i=0)\bigg(
\binom {2n-2}{n+i}
\binom {2m}{m+i}
-
\binom {2n-2}{n+i-1}
\binom {2m}{m+i}\\
&\kern3.5cm
+\binom {2n}{n+i}
\binom {2m-2}{m+i-1}
-
\binom {2n}{n+i}
\binom {2m-2}{m+i}\bigg)\\
&
=\frac {m-n} {m+n}
\sum_{0\le i}
\alpha(i=0)
\bigg(
\frac {(2n-2)!\,(2m-2)!\,(4nm-4(i+1)n-4(i+1)m+1)} 
{(n+i)!\,(n-i-1)!\,(m+i)!\,(m-i-1)!}\\
&\kern5cm
-
\frac {(2n-2)!\,(2m-2)!\,(4nm-4in-4im+1)} 
{(n+i-1)!\,(n-i)!\,(m+i-1)!\,(m-i)!}
\bigg)\\
&
=\frac {m-n} {m+n}
\bigg(
-\frac {1} {2}
\frac {(2n-2)!\,(2m-2)!\,(4nm-4n-4m+1)} 
{n!\,(n-1)!\,m!\,(m-1)!}\\
&\kern3cm
-\frac {1} {2}
\frac {(2n-2)!\,(2m-2)!\,(4nm+1)} 
{(n-1)!\,n!\,(m-1)!\,m!}
\bigg)\\
&
=-\frac {m-n} {4(m+n)}\binom {2n}n\binom {2m}m,
\end{align*}}%
which is the desired result.\footnote{For the
finding of the telescoping form of the sum over $i$
see Footnote~\ref{foot:1}.} 
\end{proof}

\begin{lemma} \label{lem:3}
For all non-negative integers $m,n,i,j$ with $m\ge n$ and $i<j$, we have
$$
\binom {2n}{n+i}\binom {2m-2}{m+j-1}\ge
\binom {2n-2}{n+j-1}\binom {2m}{m+i} ,
$$
with equality if and only if $m=n$.
\end{lemma}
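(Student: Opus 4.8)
The plan is to reduce the claimed two‑sided comparison to the strict monotonicity, in the ``top'' parameter, of a single ratio of binomial coefficients. Note that whenever the coefficients $\binom{2n-2}{n+j-1}$ and $\binom{2m-2}{m+j-1}$ are nonzero, the asserted inequality is, upon cross‑multiplying and grouping the factors with upper parameter near $2n$ on one side and those near $2m$ on the other, equivalent to $g(n)\ge g(m)$, where
$$g(N):=\frac{\binom{2N}{N+i}}{\binom{2N-2}{N+j-1}}.$$
Since $m\ge n$, it then suffices to show that $g(N)$ is strictly decreasing on the range where it is defined, namely $N\ge j+1$ (for $N\le j$ the denominator $\binom{2N-2}{N+j-1}$ vanishes). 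First I would dispose of the degenerate case $n\le j$: there $\binom{2n-2}{n+j-1}=0$, so the right‑hand side of the lemma is $0$ while the left‑hand side is a product of nonnegative binomial coefficients, and the inequality holds trivially. In the complementary \emph{full‑support} regime $n\ge j+1$ one has, because $0\le i<j\le n-1\le m-1$, that all four binomial coefficients are strictly positive, so the reduction to $g(n)\ge g(m)$ is legitimate; moreover $m=n$ makes the two sides coincide, giving equality, so the whole point is the \emph{strict} decrease of $g$, which will force strict inequality exactly when $m>n$.

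To establish the monotonicity, I would compute the one‑step ratio directly from the factorial expressions. Using $\binom{2N}{N+j}/\binom{2N-2}{N+j-1}=\frac{(2N)(2N-1)}{(N+j)(N-j)}$ together with $\binom{2N}{N+i}/\binom{2N+2}{N+1+i}=\frac{(N+1+i)(N+1-i)}{(2N+1)(2N+2)}$, one finds after cancellation
$$\frac{g(N)}{g(N+1)}=\frac{2N(2N-1)\bigl((N+1)^2-i^2\bigr)}{(2N+1)(2N+2)\,(N^2-j^2)}.$$
For $N\ge j+1$ the denominator is positive, so $g(N)>g(N+1)$ is equivalent to the polynomial inequality
$$N(2N-1)\bigl((N+1)^2-i^2\bigr)>(N+1)(2N+1)(N^2-j^2).$$

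The final step is to verify this last inequality, which I expect to be the only genuine computation. Expanding the difference of the two sides collapses to
$$N^2\,(2j^2-2i^2-1)+N\,(3j^2+i^2-1)+j^2.$$
Since $0\le i<j$, we have $j\ge1$ and $j^2-i^2=(j-i)(j+i)\ge1$, whence $2j^2-2i^2-1\ge1$, $3j^2+i^2-1\ge2$, and $j^2\ge1$; thus every coefficient is strictly positive and the expression is $>0$ for all $N\ge0$. This proves $g(N)>g(N+1)$ for $N\ge j+1$, so $g$ is strictly decreasing there. Telescoping, $g(n)/g(m)=\prod_{N=n}^{m-1}g(N)/g(N+1)$, and each factor exceeds $1$, so $g(n)>g(m)$ precisely when $m>n$ while $g(n)=g(m)$ only when $m=n$; this is exactly the desired strict comparison together with the equality characterisation in the full‑support regime. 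I do not expect any single step to be hard; the only real obstacle is the bookkeeping of vanishing binomial coefficients, i.e.\ cleanly separating the regime $n\ge j+1$, where the ratio argument applies and the equality claim is sharp, from the degenerate regime $n\le j$, where the right‑hand side is zero and the inequality is immediate. (It is precisely the full‑support terms, present whenever $m>n$, that make the sum in the proof of Theorem~\ref{thm:1} strictly positive.)
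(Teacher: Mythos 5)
Your proof is correct, and it takes a genuinely different route from the paper's. The paper forms the full ratio $\binom{2n}{n+i}\binom{2m-2}{m+j-1}\big/\binom{2n-2}{n+j-1}\binom{2m}{m+i}$ and factors it as a product indexed by the \emph{lower} parameter $k=i+1,\dots,j-1$ (plus two boundary factors), each interior factor being $\ge 1$ because its numerator minus its denominator equals $(2k-1)(m-n)\ge0$, and the boundary factors being compared via the rewriting $\frac{2n}{n-j+1}=2+\frac{2j-2}{n-j+1}$. You instead separate the variables, reducing the claim to monotonicity in the \emph{top} parameter of the single function $g(N)=\binom{2N}{N+i}/\binom{2N-2}{N+j-1}$, and telescope over $N$ from $n$ to $m$ rather than over $k$ from $i$ to $j$. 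Your computation of $g(N)/g(N+1)$ and the resulting quadratic $(2j^2-2i^2-1)N^2+(3j^2+i^2-1)N+j^2$ check out, and the payoff is that the whole inequality collapses to one polynomial whose coefficients are visibly positive precisely because $0\le i<j$; neither $m$ nor $n$ appears in the final verification, and the role of the hypothesis $i<j$ is laid bare. You are also more explicit than the paper about the vanishing-binomial regime: the paper's proof silently divides by a quantity that can be zero, and indeed the ``equality only if $m=n$'' clause of the lemma as literally stated fails in degenerate cases (e.g.\ $n=1$, $m=2$, $i=2$, $j=3$ gives $0=0$); your restriction of the equality characterisation to the full-support terms, which are the ones actually making the sum in the proof of Theorem~\ref{thm:1} strictly positive when $m>n$, is the more careful reading.
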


\begin{proof}
We have
\begin{align*}
\frac {\binom {2n}{n+i}\binom {2m-2}{m+j-1}} 
{\binom {2n-2}{n+j-1}\binom {2m}{m+i}} 
&=
\frac {2n(2n-1)} {2m(2m-1)}
\frac {(m-j+1)(m-j)} {(n-j+1)(n-j)}
\prod _{k=i+1} ^{j-1}
\frac {(n+k)(m-k+1)} 
{(n-k+1)(m+k)}\\
&=
\frac {\left(2+\frac {2j-2}{n-j+1}\right)
\left(2+\frac {2j-1}{n-j}\right)}
{\left(2+\frac {2j-2}{m-j+1}\right)
\left(2+\frac {2j-1}{m-j}\right)}
\prod _{k=i+1} ^{j-1}
\frac {nm+km-(k-1)n-k(k-1)} 
{nm-(k-1)m+kn-k(k-1)}\ge1,
\end{align*}
and visibly equality holds if and only if $m=n$. 
\end{proof}

\section*{Acknowledgements}
The authors thank an anonymus referee for an extremely careful reading
of the original manuscript and for the many suggestions leading to an 
improved presentation.


\begin{thebibliography}{1}

\bibitem{BoLSAA}
A. Bostan, P. Lairez and B. Salvy, 
\textit{Multiple binomial sums}, 
 J. Symbol. Comput. {\bf 80} (2017), 351--386.

\bibitem{BrKWAA}
R. P. Brent, C. Krattenthaler and S. O. Warnaar,
\textit{Discrete analogues of Macdonald--Mehta integrals}, 
J. Combin. Theory Ser.~A {\bf 144} (2016), 80--138.

\bibitem{BOOPAA}
R. P. Brent, H. Ohtsuka, J. H. Osborn and H. Prodinger,
\textit{Some binomial sums involving absolute values},
unpublished manuscript;
\url{http://arxiv.org/abs/1411.1477v1}{ar$\chi$iv:1411.1477v1}.

\bibitem{BOOPAB}
R. P. Brent, H. Ohtsuka, J. H. Osborn and H. Prodinger,
\textit{Some binomial sums involving absolute values},
J. Integer Seq. \textbf{19} (2016), Art.~16.3.7,~14~pp.

\bibitem{BO}
R. P. Brent and J. H. Osborn,
\textit{Note on a double binomial sum relevant to the Hadamard maximal 
determinant problem},
\url{http://arxiv.org/abs/1309.2795}{ar$\chi$iv:1309.2795}.

\bibitem{BOS}
R. P. Brent, J. H. Osborn and W. D. Smith,
\textit{Lower bounds on maximal determinants of binary matrices
via the probabilistic method}, 
\url{http://arxiv.org/abs/1402.6817}{ar$\chi$iv:1402.6817}.

\bibitem{GospAB} 
R. W. Gosper, {\em Decision procedure for 
indefinite hypergeometric summation}, Proc. Natl. Acad. Sci.\
USA {\bf 75} (1978), 40--42.

\bibitem{GOHsAA} 
H. W. Gould and L. C. Hsu, {\it Some new inverse series relations},
Duke Math.\ J. {\bf 40} (1973), 885--891.

\bibitem{GrKPAA} R. L. Graham, D. E. Knuth and O. Patashnik, {\it  
Concrete Mathematics}, Addison-Wesley, Reading, Massachusetts, 1989. 

\bibitem{Hadamard83}
J. Hadamard,
\textit{R\'esolution d'une question relative aux d\'eterminants}, 
Bull. Sciences Math. \textbf{17} (1893), 240--246.

\bibitem{Karr:81}
M.~Karr,
{\em Summation in finite terms},
J.~Assoc. Comput. Machinery {\bf 28} (1981), 305--350.

\bibitem{KratAN} C.    Krattenthaler, {\it A new matrix
  inverse}, Proc. Amer. Math. Soc. {\bf 124} (1996), 47--59.

\bibitem{PaScAA} P. Paule and M. Schorn,
{\em A Mathematica version of Zeilberger's algorithm for proving 
binomial coefficient identities}, J. Symbol.\ Comput.\ {\bf 20} (1995),
673--698.

\bibitem{PeWZAA} M.~Petkov\v sek, H.~Wilf and D.~Zeilberger,
{\em A=B}, A.~K.~Peters, Wellesley, 1996.

\bibitem{Schneider:07a}
C.~Schneider,
{\em Symbolic summation assists combinatorics},
S\'em.~Lothar. Combin. {\bf 56} (2007), Article~B56b, 36~pp.

\bibitem{Schneider:13a}
C.~Schneider,
{\em Simplifying multiple sums in difference fields},
in: Computer Algebra in
  Quantum Field Theory: Integration, Summation and Special Functions,
C.~Schneider and J.~Bl\"umlein (eds.), Texts
  and Monographs in Symbolic Computation, Springer, 2013, pp.~325--360.

\bibitem{Schneider:16a}
C.~Schneider,
{\em A difference ring theory for symbolic summation},
J. Symbol. Comput. {\bf 72} (2016), 82--127.

\bibitem{Schneider:16b}
C.~Schneider,
{\em Summation Theory II: Characterizations of
  ${R\Pi\Sigma^*}$-extensions and algorithmic aspects},
J. Symbol. Comput. {\bf 80} (2017), 616--664. 

\bibitem{SlatAC} 
L.~J.~Slater, 
\textit{Generalized Hypergeometric Functions},
Cambridge University Press, Cambridge, 1966.

\end{thebibliography}
\end{document}